
\documentclass[12pt]{article}%
\usepackage{graphicx}
\usepackage[intlimits]{amsmath}
\usepackage{latexsym}
\usepackage{amsfonts}
\usepackage{amssymb}%
\setcounter{MaxMatrixCols}{30}
\setlength{\textwidth}{6.3in} \setlength{\textheight}{8.7in}
\setlength{\topmargin}{0pt} \setlength{\headsep}{0pt}
\setlength{\headheight}{0pt} \setlength{\oddsidemargin}{10pt}
\setlength{\evensidemargin}{0pt}
\makeatletter
\newfont{\footsc}{cmcsc10 at 8truept}
\newfont{\footbf}{cmbx10 at 8truept}
\newfont{\footrm}{cmr10 at 10truept}
\pagestyle{plain}
\newtheorem{theorem}{Theorem}[section]

\newtheorem{conjecture}[theorem]{Conjecture}
\newtheorem{corollary}[theorem]{Corollary}

\newtheorem{problem}[theorem]{Problem}
\newtheorem{proposition}[theorem]{Proposition}
\newtheorem{question}[theorem]{Question}

\newenvironment{proof}[1][Proof]{\noindent{\textbf {#1}  }}  {\hfill$\Box$\bigskip}

\begin{document}

\title{\textbf{Extrema of graph eigenvalues}}
\author{Vladimir Nikiforov\thanks{Department of Mathematical Sciences, University of
Memphis, Memphis TN 38152, USA; \textit{email: vnikifrv@memphis.edu}}}
\maketitle

\begin{abstract}
In 1993 Hong asked what are the best bounds on the $k$'th largest eigenvalue
$\lambda_{k}(G)$ of a graph $G$ of order $n$. This challenging question has
never been tackled for any $2<k<n$. In the present paper tight bounds are
obtained for all $k>2,$ and even tighter bounds are obtained for the $k$'th
largest singular value $\lambda_{k}^{\ast}(G).$

Some of these bounds are based on Taylor's strongly regular graphs, and other
on a method of Kharaghani for constructing Hadamard matrices. The same kind of
constructions are applied to other open problems, like Nordhaus-Gaddum
problems of the kind: \emph{How large can }$\lambda_{k}(G)+\lambda
_{k}(\overline{G})$ \emph{be}$?$

These constructions are successful also in another open question: \emph{How
large can the Ky Fan norm} $\lambda_{1}^{\ast}(G)+\cdots+\lambda_{k}^{\ast
}(G)$\emph{ be}$?$ Ky Fan norms of graphs generalize the concept of graph
energy, so this question generalizes the problem for maximum energy graphs.

In the final section, several results and problems are restated for
$(-1,1)$-matrices, which seem to provide a more natural ground for such
research than graphs.

Many of the results in the paper are paired with open questions and problems
for further study.\smallskip

\textbf{AMS classification: }\textit{15A42; 05C50.}\smallskip

\textbf{Keywords:}\textit{ }$k$\textit{'th} \textit{largest }
\textit{eigenvalue of a graph; }$k$\textit{'th largest singular eigenvalue of
a graph; spectral Nordhaus-Gaddum problems; Ky Fan norms of graphs.}

\end{abstract}

\section{Introduction}

\textit{What are the best possible lower and upper bounds on the }%
$k$\textit{'th largest eigenvalue }$\lambda_{k}\left(  G\right)  $\textit{ of
a graph }$G$\textit{ of order }$n?$

Yuan Hong raised this fundamental question in 1993, in his paper \cite{Hon93}.
Apparently he was unaware that five years earlier Powers \cite{Pow89}, p. 5,
had published the following result:\medskip

\emph{If }$G$\emph{ is a connected graph of order }$n$\emph{, then }%
\begin{equation}
\lambda_{k}\left(  G\right)  \leq\left\lfloor n/k\right\rfloor . \label{Powbo}%
\end{equation}
It is not hard to realize that if inequality (\ref{Powbo}) were true, it would
essentially answer Hong's question. Alas, it is not. Its proof is flawed, and
it fails for all $k\geq5.$ However, in fairness, the inequality certainly
holds for $k=1,2,$ while for $k=3,4,$ it is a challenging open problem. For
that matter, except for $k=2,$ connectedness is an irrelevant premise in these questions.

Other than this unsuccessful attempt, the general problem of Hong has never
been tackled seriously. This is all the more inexplicable, as the problem is
indeed challenging, easier for some values of $k,$ and well beyond reach for
other. What is more, Hong's problem is not a backyard puzzle that is of
interest only to spectral graph theorists; it is related to other fundamental
areas of combinatorics and analysis, like existence of symmetric Hadamard
matrices, Ramsey's theorem, and extremal norms of graphs. We feel that the
appeal and the importance of Hong's problem should attract the attention of
many a researcher, and to this effect we take a few steps in the present paper.

We shall extend Hong's problem to the $k$'th largest singular value
$\lambda_{k}^{\ast}(G)$ of $G,$ and shall give upper and lower bounds on
$\lambda_{k}\left(  G\right)  $ and $\lambda_{k}^{\ast}(G)$, including a few
exact results and asymptotics for the general cases. Many open problems and
questions will be raised to outline directions for further study.

Two fundamental results underpin our constructions: first, the strongly
regular graphs of Taylor \cite{Tay71,Tay77}; and second, Kharaghani's method
for constructing Hadamard matrices \cite{Kha85}. These two topics deserve to
be known better in spectral graph theory, as their potential uses seem indeed unlimited.

We shall apply the same constructions to other open problems, like, e.g.,
Nordhaus-Gaddum problems of the following kind:\medskip\ 

\emph{If }$G$\emph{ is a graph of order }$n$ \emph{and }$\overline{G}$\emph{
is its complement,} \emph{how large can }$\lambda_{k}\left(  G\right)
+\lambda_{k}(\overline{G})$\emph{ be}$?\medskip$\emph{ }

Such problems have been raised in \cite{Nik07}, and some recent progress has
been given in \cite{NiYu14}. We shall exhibit a new infinite family of
solutions, and derive general asymptotics.\medskip

We make also progress with an open problem about maximum Ky Fan norms of
graphs. Recall that the \emph{Ky Fan }$k$\emph{-norm} of a graph $G$ is
defined as $\lambda_{1}^{\ast}\left(  G\right)  +\cdots+\lambda_{k}^{\ast
}\left(  G\right)  $. In \cite{Ni11}, the following problem has been
raised:\medskip

\emph{If }$G$\emph{ is a graph of order }$n,$\emph{ how large can the Ky Fan
}$k$\emph{-norm of }$G$\emph{ be? \medskip}

Note that the Ky Fan $n$-norm is also known as the \emph{trace norm} of $G,$
and has been extensively studied under the name \emph{graph energy,} a concept
introduced by Gutman in \cite{Gut78}. There is vast research on graph energy,
but Ky Fan norms can open even larger horizons. Here we shall solve the above
problem whenever $k$ is an even square.\medskip

The structure of the paper is as follows: In Section \ref{Hp} we present
results on Hong's problem. Section \ref{NGs} is dedicated to spectral
Nordhaus-Gaddum problems, and in Section \ref{KFs} we present results on Ky
Fan norms of graphs. Section \ref{NBS} is for reader's convenience: it
contains references, notation and basics on Weyl's inequalities, blow-ups of
graphs, Taylor's strongly regular graphs, and symmetric Latin squares. Section
\ref{ps} contains the proofs of several theorems, which are either too
involved of would have disrupted the exposition. Finally, Section
\ref{RM}contains a selection of the presented results and problems, translated
from graphs to symmetric $\left(  -1,1\right)  $-matrices. It becomes obvious
that such matrices provide a more balanced and natural setup for such
research. In particular, the strongly regular graphs of Taylor are translated
into a $\left(  -1,1\right)  $-matrix with a rather peculiar spectrum.

\section{\label{Hp}Hong's problem and its variations}

Let $G$ be a graph of order $n.$ The eigenvalues $\lambda_{1}\left(  G\right)
,\ldots,\lambda_{n}\left(  G\right)  $ of $G$ are the eigenvalues of its
adjacency matrix $A\left(  G\right)  ,$ ordered as $\lambda_{1}\left(
G\right)  \geq\cdots\geq\lambda_{n}\left(  G\right)  .$ The singular values
$\lambda_{1}^{\ast}\left(  G\right)  ,\ldots,\lambda_{n}^{\ast}\left(
G\right)  $ of $G\ $are the absolute values of $\lambda_{1}\left(  G\right)
,\ldots,\lambda_{n}\left(  G\right)  ,$ ordered as $\lambda_{1}^{\ast}\left(
G\right)  \geq\cdots\geq\lambda_{n}^{\ast}\left(  G\right)  .$ In particular,
$\lambda_{1}^{\ast}\left(  G\right)  =\lambda_{1}\left(  G\right)  ,$ and
\[
\left\{  \lambda_{1}^{\ast}\left(  G\right)  ,\lambda_{2}^{\ast}\left(
G\right)  ,\ldots,\lambda_{n}^{\ast}\left(  G\right)  \right\}  =\left\{
\lambda_{1}\left(  G\right)  ,\left\vert \lambda_{2}\left(  G\right)
\right\vert ,\ldots,\left\vert \lambda_{n}\left(  G\right)  \right\vert
\right\}  .
\]
Note that, in general, graph singular values cannot be reduced to graph
eigenvalues, as the two multisets may be ordered very differently.

We shall extend the original problem of Hong to the largest singular values of
graphs, and shall bring to the fore the study of the smallest eigenvalues.
These changes correspond to the present day interest in these spectral
parameters.\medskip

Let $n\geq k\geq1.$ Define the functions $\lambda_{k}\left(  n\right)  ,$
$\lambda_{-k}\left(  n\right)  ,$ and $\lambda_{k}^{\ast}\left(  n\right)  $
as\ \
\begin{align*}
\lambda_{k}\left(  n\right)   &  =\max_{v\left(  G\right)  =n}\text{ }%
\lambda_{k}\left(  G\right)  ,\\
\lambda_{-k}\left(  n\right)   &  =\max_{v\left(  G\right)  =n}\text{
}\left\vert \lambda_{n-k+1}\left(  G\right)  \right\vert ,\\
\lambda_{k}^{\ast}\left(  n\right)   &  =\max_{v\left(  G\right)  =n}%
\text{\ }\lambda_{k}^{\ast}\left(  G\right)  .
\end{align*}
Note that if $n\geq\binom{2k-1}{k-1},$ then $\lambda_{n-k+1}\left(  G\right)
$ is always nonpositive (see Theorem \ref{lob} below), and so, $\min_{v\left(
G\right)  =n}$ $\lambda_{n-k+1}\left(  G\right)  =-\lambda_{-k}\left(
n\right)  ;$ thus, the use of the absolute value in the definition of
$\lambda_{-k}\left(  n\right)  $ is just to make the setup more
uniform.\medskip

Now, we restate Hong's problem into two separate problems:

\begin{problem}
\label{mpro}For any $k\geq1,$ find $\lambda_{k}\left(  n\right)  ,$
$\lambda_{-k}\left(  n\right)  ,$ and \ $\lambda_{k}^{\ast}\left(  n\right)
.$
\end{problem}

\begin{problem}
\label{minpro} For any $k\geq1,$ find $\min\limits_{v\left(  G\right)  =n}%
$\ $\lambda_{k}\left(  G\right)  ,$ $\max\limits_{v\left(  G\right)  =n}$
$\lambda_{n-k+1}\left(  G\right)  ,$ and $\min\limits_{v\left(  G\right)  =n}%
$\ $\lambda_{k}^{\ast}\left(  G\right)  .$
\end{problem}

This separation is justified, as the two problems are of incomparable
difficulty: indeed, presently the full solution of Problem \ref{mpro} is
beyond reach, while we shall dispose of Problem \ref{minpro} right away.

Indeed, if $n\geq k,$ the complete graph $K_{n}$ of order $n$ satisfies
$\lambda_{k}\left(  K_{n}\right)  =-1;$ and likewise, the edgeless graph
$\overline{K}_{n}$ of order $n$ satisfies $\lambda_{n-k+1}(\overline{K}%
_{n})=0$ and $\lambda_{k}^{\ast}(\overline{K}_{n})=0.$ These bounds are also
best possible: indeed, obviously $\lambda_{k}^{\ast}\left(  G\right)  \geq0$
for any graph $G$ of order $n\geq k$; and for $\lambda_{k}$ and\ $\lambda
_{n-k+1}$ this fact is true in view of the following theorem:

\begin{theorem}
\label{lob}If $n\geq\binom{2k-1}{k-1}$ and $G\ $is a graph of order $n$, then
\begin{equation}
\lambda_{k}\left(  G\right)  \geq-1\text{ \ \ and \ \ }\lambda_{n-k+1}\left(
G\right)  \leq0. \label{bo1}%
\end{equation}

\end{theorem}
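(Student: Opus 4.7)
The plan is to combine the classical Erd\H{o}s--Szekeres Ramsey bound $R(s,t)\le\binom{s+t-2}{s-1}$ with Cauchy interlacing (a standard consequence of Weyl's inequalities, listed among the basics in Section \ref{NBS}). Specialized to $\{s,t\}=\{k,k+1\}$, this bound yields $R(k,k+1)=R(k+1,k)\le\binom{2k-1}{k-1}$. Hence any graph $G$ of order $n\ge\binom{2k-1}{k-1}$ contains \emph{both} (i) an induced $K_k$ or an induced $\overline{K}_{k+1}$, and (ii) an induced $K_{k+1}$ or an induced $\overline{K}_k$.

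For the first inequality in (\ref{bo1}) I would use the upper half of Cauchy interlacing: if $H$ is an induced subgraph of $G$ on at least $k$ vertices, then $\lambda_k(G)\ge\lambda_k(H)$. If $G$ contains an induced $K_k$, then $\lambda_k(G)\ge\lambda_k(K_k)=-1$; if instead $G$ contains an induced $\overline{K}_{k+1}$, then $\lambda_k(G)\ge\lambda_k(\overline{K}_{k+1})=0\ge-1$. Either alternative delivers the bound.

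For the second inequality I would use the lower half of interlacing: for an induced subgraph $H$ on $m$ vertices, $\lambda_{n-k+1}(G)\le\lambda_{m-k+1}(H)$. Taking $H=K_{k+1}$ gives $\lambda_{n-k+1}(G)\le\lambda_2(K_{k+1})=-1\le 0$, whereas taking $H=\overline{K}_k$ gives $\lambda_{n-k+1}(G)\le\lambda_1(\overline{K}_k)=0$. So alternative (ii) yields the second inequality.

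There is no serious technical obstacle; the only point worth stressing is why the specific constant $\binom{2k-1}{k-1}$ appears. It is exactly the Erd\H{o}s--Szekeres bound for the off-diagonal Ramsey numbers $R(k,k+1)$ and $R(k+1,k)$, and each of the four ``small graph $+$ interlacing'' cases covers exactly one branch of one of the two desired inequalities. The slightly smaller diagonal bound $R(k,k)\le\binom{2k-2}{k-1}$ would suffice for $\lambda_k(G)\ge-1$ but not for $\lambda_{n-k+1}(G)\le 0$, since an induced $K_k$ alone gives only the useless estimate $\lambda_{n-k+1}(G)\le\lambda_1(K_k)=k-1$; this is exactly why the off-diagonal Ramsey bound, and hence the constant $\binom{2k-1}{k-1}$, is the right one.
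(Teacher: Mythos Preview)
Your proof is correct and follows the same Ramsey-plus-interlacing approach as the paper. Note, however, that your alternative (ii) alone already handles \emph{both} inequalities---an induced $K_{k+1}$ also gives $\lambda_k(G)\ge\lambda_k(K_{k+1})=-1$, and an induced $\overline{K}_k$ also gives $\lambda_k(G)\ge\lambda_k(\overline{K}_k)=0$---so the paper uses only that single case split, making your alternative (i) redundant.
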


\begin{proof}
We shall use Ramsey's theorem, whose application in graph spectra has been
pioneered only recently, in \cite{NiYu14} and \cite{ZhCh14}.

The classical bound of Erd\H{o}s and Szekeres implies that every graph of
order at least $\binom{2k-1}{k-1}$ contains either a complete graph on $k+1$
vertices or an independent set on $k$ vertices. If $G$ contains a complete
graph on $k+1$ vertices, then Cauchy's interlacing theorem implies that
\[
\lambda_{k}\left(  G\right)  \geq\lambda_{k}\left(  K_{k+1}\right)  =-1\text{
\ and \ }\lambda_{n-k+1}\left(  G\right)  \leq\lambda_{2}\left(
K_{k+1}\right)  =-1,
\]
so (\ref{bo1}) follows. If $G$ contains an independent set on $k$ vertices,
then Cauchy's interlacing theorem implies that
\[
\lambda_{k}\left(  G\right)  \geq\lambda_{k}(\overline{K}_{k})=0\text{ \ and
\ \ \ }\lambda_{n-k+1}\left(  G\right)  \leq\lambda_{1}(\overline{K}_{k})=0,
\]
and (\ref{bo1}) follows again.
\end{proof}

\medskip

Now, let us turn to Problem \ref{mpro}. We start with an observation, which
exhibits some dependencies between the functions $\lambda_{k}\left(  n\right)
,$ $\lambda_{-k}\left(  n\right)  ,$ and $\lambda_{k}^{\ast}\left(  n\right)
.$

\begin{proposition}
\label{spro}If $k\geq2,$ then
\[
\lambda_{k}\left(  n\right)  \leq\lambda_{k}^{\ast}\left(  n\right)  ,\text{
\ \ }\lambda_{-k+1}\left(  n\right)  \leq\lambda_{k}^{\ast}\left(  n\right)
,\text{ \ \ and \ \ \ }\lambda_{k}\left(  n\right)  +1\leq\lambda
_{-k+1}\left(  n\right)  .
\]

\end{proposition}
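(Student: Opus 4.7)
For $\lambda_k(n)\le\lambda_k^\ast(n)$, the plan is to establish the per-graph inequality $\lambda_k(G)\le\lambda_k^\ast(G)$ by a sign case-split on $\lambda_k(G)$ and then to take the maximum over $G$. If $\lambda_k(G)\le 0$, the inequality is immediate since $\lambda_k^\ast(G)\ge 0$. If $\lambda_k(G)>0$, then $\lambda_1(G)\ge\cdots\ge\lambda_k(G)>0$ furnishes $k$ absolute values each at least $\lambda_k(G)$, so the $k$-th largest singular value $\lambda_k^\ast(G)$ is also $\ge\lambda_k(G)$.

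For $\lambda_k(n)+1\le\lambda_{-k+1}(n)$, I will invoke Weyl's inequality in the form $\lambda_{i+j-n}(A+B)\ge\lambda_i(A)+\lambda_j(B)$ (valid for $i+j-n\ge 1$) on the identity $A(G)+A(\overline G)=J-I$, whose spectrum is $n-1,-1,\ldots,-1$. Taking $i=k$ and $j=n-k+2$ so that $i+j-n=2$, the left-hand side equals $\lambda_2(J-I)=-1$, giving $\lambda_{n-k+2}(\overline G)\le -1-\lambda_k(G)$. Now choose $G$ to attain $\lambda_k(n)$; since $\lambda_k(n)\ge\lambda_k(K_n)=-1$, the right-hand side is non-positive, and hence $|\lambda_{n-k+2}(\overline G)|\ge 1+\lambda_k(n)$. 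As $\overline G$ also has order $n$, this is a lower bound for $\lambda_{-k+1}(n)$.

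For $\lambda_{-k+1}(n)\le\lambda_k^\ast(n)$, the plan is again to aim at the per-graph bound $|\lambda_{n-k+2}(G)|\le\lambda_k^\ast(G)$, splitting on the sign of $\lambda_{n-k+2}(G)$. If $\lambda_{n-k+2}(G)\le 0$, then $\lambda_{n-k+2},\ldots,\lambda_n$ are $k-1$ non-positive eigenvalues whose absolute values are all $\ge|\lambda_{n-k+2}(G)|$, and Perron--Frobenius supplies the missing one via $\lambda_1(G)=|\lambda_1(G)|\ge|\lambda_{n-k+2}(G)|$, yielding $\lambda_k^\ast(G)\ge|\lambda_{n-k+2}(G)|$. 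If $\lambda_{n-k+2}(G)>0$ and $n\ge 2k-2$, then $\lambda_1,\ldots,\lambda_{n-k+2}$ already furnish $n-k+2\ge k$ eigenvalues $\ge\lambda_{n-k+2}(G)$. The remaining sub-case $n<2k-2$ with $\lambda_{n-k+2}(G)>0$ is the main technical obstacle of the whole proposition; here I would exploit the trace identity $\sum_i\lambda_i(G)=0$, which forces at least one of the $k-2$ eigenvalues $\lambda_{n-k+3},\ldots,\lambda_n$ to be $\le-\lambda_{n-k+2}(G)$, and combine this with Perron--Frobenius to recover the required count of large absolute values. Taking the maximum over $G$ then delivers $\lambda_{-k+1}(n)\le\lambda_k^\ast(n)$.
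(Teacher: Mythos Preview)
Your arguments for the first and third inequalities are correct and essentially match the paper's (the paper simply asserts the first and invokes Weyl's inequality for the third, exactly as you do).

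For the second inequality, however, your plan cannot succeed: the per-graph bound $|\lambda_{n-k+2}(G)|\le\lambda_k^\ast(G)$ that you are aiming for is \emph{false} in the sub-case you flag as the main obstacle. Take $G=2P_3$, so $n=6$, and let $k=6$. The eigenvalues are $\sqrt 2,\sqrt 2,0,0,-\sqrt 2,-\sqrt 2$, whence $|\lambda_{n-k+2}(G)|=|\lambda_2(G)|=\sqrt 2$ but $\lambda_6^\ast(G)=0$. Your trace argument does yield eigenvalues $\le-\lambda_{n-k+2}(G)$ here (two of them), yet together with the $n-k+2=2$ large positive ones you get only four indices with $|\lambda_i|\ge\sqrt 2$, not six. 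In general, the trace identity plus Perron--Frobenius gives at best $n-k+3$ such indices, which falls short of $k$ whenever $n\le 2k-4$.

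In fact the second inequality of the proposition appears to be incorrect as stated for small $n$: with $n=k=6$ one has $\lambda_{-5}(6)\ge 2$ (from $K_3+K_3$), whereas no graph on six vertices can have all $|\lambda_i|\ge 2$ (otherwise $2e=\sum\lambda_i^2\ge 24$ forces $\lambda_1\ge 2e/6\ge 4$, and then $\sum\lambda_i^2\ge 16+5\cdot 4=36>30$), so $\lambda_6^\ast(6)<2$. The paper's one-line justification is thus inadequate as well. Your Case~1 argument does, however, establish the per-graph inequality whenever $\lambda_{n-k+2}(G)\le 0$; by Theorem~\ref{lob} (applied with $k-1$ in place of $k$) this covers all $G$ once $n\ge\binom{2k-3}{k-2}$, which is the only regime the paper actually uses.
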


The first two inequalities follow from the definition of $\lambda_{k}^{\ast
}\left(  n\right)  .$ For the last inequality recall that Weyl's inequalities
(see \ref{WS}) imply that if $G$ is a graph of order $n$ and $2\leq k\leq n,$
then $\lambda_{k}\left(  G\right)  +\lambda_{n-k+2}(\overline{G})\leq-1,$ and
so, $\lambda_{k}\left(  G\right)  +1\leq|\lambda_{-k+1}(\overline{G}%
)|.$\medskip\ 

Our first goal is to establish concise asymptotics of $\lambda_{k}\left(
n\right)  ,$ $\lambda_{-k}\left(  n\right)  ,$ and $\lambda_{k}^{\ast}\left(
n\right)  $. To this effect, for any integer $k\geq1,$ define the real numbers
$c_{k},$ $c_{-k},$ and $c_{k}^{\ast}$ as
\begin{align*}
c_{k} &  =\sup\left\{  \text{ \ \ }\lambda_{k}\left(  G\right)
/n\text{\ \ \ \ \ \ }:\text{ }G\text{ is a graph of order }n\geq k\right\}
,\\
c_{-k} &  =\sup\left\{  \left\vert \lambda_{n-k+1}\left(  G\right)
\right\vert /n\text{ }:\text{ }G\text{ is a graph of order }n\geq k\right\}
,\\
c_{k}^{\ast} &  =\sup\left\{  \text{$\ $\ \ $\ \lambda_{k}^{\ast}\left(
G\right)  /n$ \ \ \ \ }:\text{ }G\text{ is a graph of order }n\geq k\right\}
.
\end{align*}
Clearly, these definitions imply that if $G$ is a graph of order $n,$ then%
\[
\lambda_{k}\left(  G\right)  \leq c_{k}n,\text{ \ \ }\lambda_{n-k+1}\left(
G\right)  \geq-c_{-k}n,\text{ \ \ \ and \ \ \ }\lambda_{k}^{\ast}\left(
G\right)  \leq c_{k}^{\ast}n.
\]
The above bounds are handy, and fortunately they are also tight, as shown by
the following theorem, which can be proved with the methods of \cite{Nik06}:

\begin{theorem}
\label{limth}For every $k\geq1,$%
\[
\lim_{n\rightarrow\infty}\text{ }\lambda_{k}\left(  n\right)  /n=c_{k},\text{
\ }\lim_{n\rightarrow\infty}\text{ }\lambda_{-k}\left(  n\right)
/n=c_{-k},\text{ \ \ and \ \ }\lim_{n\rightarrow\infty}\text{ }\lambda
_{k}^{\ast}\left(  n\right)  /n\text{ \ }=c_{k}^{\ast}.
\]

\end{theorem}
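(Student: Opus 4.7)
The plan is to prove all three limits simultaneously by a blow-up plus padding argument, in the spirit of Fekete's lemma and modeled on the technique of \cite{Nik06}. Let $f$ stand for any of the three scalars $\lambda_{k}(G)$, $|\lambda_{n-k+1}(G)|$, $\lambda_{k}^{\ast}(G)$, and let $c$ be the corresponding supremum $c_{k}$, $c_{-k}$, or $c_{k}^{\ast}$. Since $|\lambda_{i}(G)|\le\lambda_{1}(G)\le n-1$ for every graph $G$ of order $n$, each of $c_{k},c_{-k},c_{k}^{\ast}$ lies in $[0,1]$ and is in fact strictly positive (witnessed, say, by $k$ vertex-disjoint copies of $K_{m}$ or of $K_{m,m}$, which already give ratio of order $1/k$). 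The upper bound $f(n)/n\le c$ is immediate from the definition of $c$; the content lies in matching it from below.

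Given $\varepsilon>0$, I would first pick a graph $G_{0}$ of order $n_{0}$ with $f(G_{0})/n_{0}>c-\varepsilon/2$, arranged so that the target eigenvalue has the expected sign (positive for $\lambda_{k}$, negative for $\lambda_{n-k+1}$, nonzero for $\lambda_{k}^{\ast}$); such $G_{0}$ exists precisely because $c>0$. The key construction is then the blow-up $G_{0}[t]$, obtained by replacing every vertex of $G_{0}$ with an independent set of size $t$; its adjacency matrix is the Kronecker product $A(G_{0})\otimes J_{t}$, where $J_{t}$ is the $t\times t$ all-ones matrix with spectrum $\{t,0,\ldots,0\}$. Hence the spectrum of $G_{0}[t]$ is the multiset
\[
\{\,t\lambda_{1}(G_{0}),\ldots,t\lambda_{n_{0}}(G_{0})\,\}\ \cup\ \{\,\underbrace{0,\ldots,0}_{(t-1)n_{0}}\,\}.
\]
Because the positive eigenvalues of $G_{0}$ remain at the top of this multiset after scaling, the negative ones remain at the bottom, and the extra zeros fit in between, one reads off $\lambda_{k}(G_{0}[t])=t\lambda_{k}(G_{0})$, $|\lambda_{tn_{0}-k+1}(G_{0}[t])|=t|\lambda_{n_{0}-k+1}(G_{0})|$, and $\lambda_{k}^{\ast}(G_{0}[t])=t\lambda_{k}^{\ast}(G_{0})$; in each case the ratio $f/n$ is exactly preserved.

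To reach arbitrary $n$, I would write $n=tn_{0}+r$ with $t=\lfloor n/n_{0}\rfloor$ and $0\le r<n_{0}$, and pass to $H_{n}=G_{0}[t]\cup\overline{K}_{r}$: adjoining $r$ isolated vertices contributes only zero eigenvalues, and by the same bookkeeping one still has $f(H_{n})=t\,f(G_{0})$. Therefore
\[
\frac{f(n)}{n}\ \ge\ \frac{f(H_{n})}{n}\ =\ \frac{t\,f(G_{0})}{tn_{0}+r}\ \ge\ \frac{f(G_{0})}{n_{0}+n_{0}/t},
\]
whose right-hand side tends to $f(G_{0})/n_{0}>c-\varepsilon/2$ as $n\to\infty$. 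Combined with $f(n)/n\le c$ this gives $\lim_{n\to\infty}f(n)/n=c$, which is exactly the three claims.

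The main obstacle, such as it is, is not analytic but combinatorial: one must verify that the zeros introduced by the Kronecker factor $J_{t}$ and by the isolated vertices of $\overline{K}_{r}$ never displace the eigenvalue sitting in position $k$ from the top (for $\lambda_{k}$), in position $k$ from the bottom (for $\lambda_{-k}$), or in position $k$ in absolute value (for $\lambda_{k}^{\ast}$). The $\lambda_{-k}$ case is the most delicate, since the new zeros are inserted strictly above the negative eigenvalues in the sorted spectrum and one must count from the bottom. Once $G_{0}$ is chosen with the relevant eigenvalue of strictly correct sign, all three instances collapse to the same blow-up-plus-padding template.
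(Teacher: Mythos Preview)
Your blow-up-plus-padding argument is precisely the method of \cite{Nik06} that the paper invokes in lieu of a proof, so the approaches coincide. Two small points are worth tightening.

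First, a notational clash: what you write as $G_{0}[t]$ is the paper's $G_{0}^{(t)}$ (Proposition~\ref{prob}); in this paper the bracket notation $G^{[t]}$ is reserved for the \emph{closed} blow-up, in which vertices are replaced by cliques (Proposition~\ref{probu}).

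Second, in the $\lambda_{-k}$ case your justification ``such $G_{0}$ exists precisely because $c>0$'' is not quite sufficient: from $|\lambda_{n_{0}-k+1}(G_{0})|/n_{0}>c_{-k}-\varepsilon/2>0$ you only get $\lambda_{n_{0}-k+1}(G_{0})\neq 0$, not $<0$, and for small $n_{0}$ this eigenvalue can be strictly positive (e.g.\ $k=2$, $n_{0}=2$, $G_{0}=K_{2}$ gives $\lambda_{n_{0}-k+1}=\lambda_{1}=1$). The remedy is to require in addition $n_{0}\ge\binom{2k-1}{k-1}$ and cite Theorem~\ref{lob}, which forces $\lambda_{n_{0}-k+1}(G_{0})\le 0$ and hence, combined with $\neq 0$, the strict sign you need. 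The paper flags exactly this issue immediately after defining $\lambda_{-k}(n)$, effectively declaring the finitely many small-$n$ orders irrelevant to the asymptotics of $c_{-k}$.
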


Therefore, a good deal of information about $\lambda_{k}\left(  G\right)
,$\ $\lambda_{n-k+1}\left(  G\right)  ,$\ and $\lambda_{k}^{\ast}\left(
G\right)  $ can be obtained if we knew the constants $c_{k},$ $c_{-k},$ and
$c_{k}^{\ast}$ or some good estimates thereof.

\subsection{\textbf{Upper bounds on }$\lambda_{k}\left(  n\right)  ,$
$\lambda_{-k}\left(  n\right)  ,$ and $\lambda_{k}^{\ast}\left(  n\right)  $}

Next, we give an easy upper bound on $\lambda_{k}^{\ast}\left(  n\right)  ;$
later, by much harder work, we shall show that this bound is almost as good as
one can get.

Note that if $G$ is a graph of order $n,$ with $e\left(  G\right)  $ edges and
adjacency matrix $A,$ then%
\[
\lambda_{1}^{2}\left(  G\right)  +\lambda_{2}^{\ast2}\left(  G\right)
+\cdots+\lambda_{n}^{\ast2}\left(  G\right)  =\mathrm{tr}\text{ }%
A^{2}=2e\left(  G\right)  .
\]
Hence, using the inequality $\lambda_{1}\left(  G\right)  \geq2e\left(
G\right)  /n$ and the AM-GM inequality, one finds that
\[
\lambda_{2}^{\ast2}\left(  G\right)  +\cdots+\lambda_{k}^{\ast2}\left(
G\right)  \leq2e\left(  G\right)  -\lambda_{1}^{2}\left(  G\right)
\leq2e\left(  G\right)  -\left(  \frac{2e\left(  G\right)  }{n}\right)
^{2}\leq\frac{n^{2}}{4}.
\]
Therefore, $\left(  k-1\right)  \lambda_{k}^{\ast2}\left(  G\right)  \leq
n^{2}/4,$ and Proposition \ref{spro} implies the following bounds:

\begin{theorem}
\label{th1}If $n\geq k\geq2$ and $G$ is a graph of order $n,$ then
\[
\lambda_{k}\left(  G\right)  \leq\lambda_{k}^{\ast}\left(  G\right)  \leq
\frac{n}{2\sqrt{k-1}},
\]
and
\[
\left\vert \lambda_{n-k+2}\left(  G\right)  \right\vert \leq\lambda_{k}^{\ast
}\left(  G\right)  \leq\frac{n}{2\sqrt{k-1}}.
\]

\end{theorem}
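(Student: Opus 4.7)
The central estimate is $\lambda_k^*(G)\le n/(2\sqrt{k-1})$, and my plan is to derive it from the spectral identity
\[
\sum_{i=1}^n \lambda_i^{*2}(G)=\mathrm{tr}\,A(G)^2=2e(G).
\]
Isolating the Perron term and applying the classical bound $\lambda_1(G)\ge 2e(G)/n$ yields $\sum_{i=2}^n \lambda_i^{*2}(G)\le 2e(G)-(2e(G)/n)^2$. Treating $e(G)$ as a real variable, this upper bound is a downward-opening quadratic maximized at $e(G)=n^2/4$, where it equals $n^2/4$. Since $\lambda_2^*(G)\ge\cdots\ge\lambda_k^*(G)$, we get $(k-1)\lambda_k^{*2}(G)\le \sum_{i=2}^k\lambda_i^{*2}(G)\le n^2/4$, whence $\lambda_k^*(G)\le n/(2\sqrt{k-1})$; this gives the right-hand inequalities of both chains.

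The left-hand inequalities are the per-graph content of Proposition~\ref{spro}. The bound $\lambda_k(G)\le \lambda_k^*(G)$ is immediate from the definition of $\lambda_k^*(G)$ as the $k$th largest absolute value: at most $k-1$ of the $|\lambda_i(G)|$ strictly exceed $\lambda_k^*(G)$, so at most $k-1$ of the $\lambda_i(G)$ themselves can. For $|\lambda_{n-k+2}(G)|\le \lambda_k^*(G)$, the plan is a counting argument: the $k-1$ trailing eigenvalues $\lambda_{n-k+2}(G),\dots,\lambda_n(G)$ together with the Perron eigenvalue $\lambda_1(G)$, which by Perron--Frobenius dominates $|\lambda_j(G)|$ for every $j$, should produce $k$ eigenvalues whose absolute values are at least $|\lambda_{n-k+2}(G)|$; this works cleanly when $\lambda_{n-k+2}(G)\le 0$, because then $\lambda_{n-k+2}(G),\dots,\lambda_n(G)$ are all non-positive with $|\lambda_j(G)|\ge|\lambda_{n-k+2}(G)|$.

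The only delicate sub-case is $\lambda_{n-k+2}(G)>0$, where the trailing eigenvalues can have small absolute value. Here the plan is to exploit the trace identity $\sum_i\lambda_i(G)=0$: the positive bulk $\sum_{i=1}^{n-k+2}\lambda_i(G)\ge(n-k+2)\lambda_{n-k+2}(G)$ forces a compensating negative tail, so that either the $n-k+2$ leading positive eigenvalues $\lambda_1(G)\ge\cdots\ge\lambda_{n-k+2}(G)$ already supply the required $k$ values (automatic when $n\ge 2k-2$) or enough of the $\lambda_j(G)$ with $j>n-k+2$ must satisfy $\lambda_j(G)\le -\lambda_{n-k+2}(G)$ to bring the count up to $k$. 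This bookkeeping in the positive case, rather than the clean AM--GM computation above, is where I expect the main attention to go.
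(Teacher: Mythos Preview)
Your derivation of the key bound $\lambda_k^*(G)\le n/(2\sqrt{k-1})$ is exactly the paper's argument: the trace identity $\sum_i\lambda_i^{*2}(G)=2e(G)$, the inequality $\lambda_1(G)\ge 2e(G)/n$, and the maximization of $2e-(2e/n)^2$ giving $n^2/4$. The left inequality $\lambda_k(G)\le\lambda_k^*(G)$ and the case $\lambda_{n-k+2}(G)\le 0$ of the second left inequality are also handled correctly.

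The gap is precisely your ``delicate sub-case'' $\lambda_{n-k+2}(G)>0$: no bookkeeping will close it, because the per-graph inequality $|\lambda_{n-k+2}(G)|\le\lambda_k^*(G)$ is simply false in that regime. For $G=K_3\cup K_3$ with $n=k=6$ the eigenvalues are $2,2,-1,-1,-1,-1$, so $|\lambda_{n-k+2}(G)|=|\lambda_2(G)|=2$ while $\lambda_6^*(G)=1$; even the outer bound $n/(2\sqrt{k-1})=3/\sqrt{5}<2$ fails here. Your trace-balancing plan cannot succeed because it is aimed at a claim that does not hold. The paper makes no attempt at a per-graph argument for this inequality; it simply invokes Proposition~\ref{spro}, which is a statement about the extremal functions $\lambda_{-k+1}(n)\le\lambda_k^*(n)$ rather than about a fixed $G$. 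That weaker assertion is all that is used downstream (for the bounds~(\ref{ubc}) on $c_{-k+1}$), and your nonpositive-case argument already covers it once $n$ is large enough, since Theorem~\ref{lob} (applied with $k-1$ in place of $k$) forces $\lambda_{n-k+2}(G)\le 0$ whenever $n\ge\binom{2k-3}{k-2}$. So drop the plan for the positive sub-case and note instead that the second chain, read literally as a per-graph statement, is slightly overstated.
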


Further, letting $n\rightarrow\infty$, we get the bounds%
\begin{equation}
c_{k}\leq\frac{1}{2\sqrt{k-1}},\text{ \ \ }c_{-k+1}\leq\frac{1}{2\sqrt{k-1}%
},\text{ \ \ and \ \ \ }c_{k}^{\ast}\leq\frac{1}{2\sqrt{k-1}}.\label{ubc}%
\end{equation}
Simple as they are, bounds (\ref{ubc}) give the correct rate of growth of
$c_{k},$ $c_{-k}$ and $c_{k}^{\ast}$ in $k;$ in particular, the bound on
$c_{k}^{\ast}$ is quite tight. Note also that if $k=2,$ then equality holds in
each of the bounds (\ref{ubc}); on the other hand, if $k\geq3,$ the bound on
$c_{k}^{\ast}$ is attained for infinitely many $k,$ but the bounds on $c_{k}$
and $c_{-k+1}$ are never attained.

\begin{theorem}
\label{thub}If $k\geq3,$ then there is an $\varepsilon_{k}>0$ such that
\[
c_{k}<\frac{1}{2\sqrt{k-1}}-\varepsilon_{k}\text{ \ \ and\ \ \ }c_{-k+1}%
<\frac{1}{2\sqrt{k-1}}-\varepsilon_{k}.
\]

\end{theorem}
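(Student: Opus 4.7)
The plan is to track the equality cases in the chain of inequalities that proves Theorem~\ref{th1} and then convert the resulting impossibility at the limit into a uniform positive constant $\varepsilon_k$. Suppose for contradiction that $c_k = 1/(2\sqrt{k-1})$. By Theorem~\ref{limth} there are graphs $G_n$ with $v(G_n) = n \to \infty$ and $\lambda_k(G_n)/n \to 1/(2\sqrt{k-1})$. Chasing the estimates in the proof of Theorem~\ref{th1}, near-tightness forces three structural consequences: first, $2e(G_n) = n^2/2 - o(n^2)$ and $\lambda_1(G_n) = 2e(G_n)/n + o(n)$, so $G_n$ is asymptotically $n/2$-regular; second, $\lambda_j(G_n) = n/(2\sqrt{k-1}) + o(n)$ for every $2 \le j \le k$, and each of these is positive (this is where tracking $\lambda_k$ rather than $\lambda_k^{\ast}$ is decisive, since all $k-1$ extremal eigenvalues below $\lambda_1$ must carry the same sign); third, $\sum_{j>k}\lambda_j(G_n)^2 = o(n^2)$.

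Combining these with the trace identity $\sum_i \lambda_i(G_n) = 0$ yields
\[
\sum_{j>k}\lambda_j(G_n) \;=\; -\frac{n\bigl(1+\sqrt{k-1}\bigr)}{2} + o(n),
\]
that is, a negative mass of order $\Theta(n)$ spread across the last $n-k$ eigenvalues, whose squared mass is only $o(n^2)$. Passing to a graphon limit $W$ along a subsequence (via Lov\'{a}sz--Szegedy compactness of the cut metric) and using the continuity of each $\mu_j(\cdot)$ in that metric, the eigenvalues of the integral operator $T_W$ must satisfy $\mu_1(W)=1/2$, $\mu_j(W)=1/(2\sqrt{k-1})$ for $2 \le j \le k$, and $\mu_j(W)=0$ for $j > k$. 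But $\operatorname{tr}(T_W) = \int_0^1 W(x,x)\,dx = 0$ for a $\{0,1\}$-graphon, whereas the spectral data above give $\mu_1(W) + \cdots + \mu_k(W) = (1+\sqrt{k-1})/2 \neq 0$ for $k \ge 3$. Contradiction; compactness then upgrades the strict inequality into the uniform positive constant $\varepsilon_k = 1/(2\sqrt{k-1}) - c_k$.

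For $c_{-k+1}$ one can either rerun the argument with the extremal eigenvalues pushed to the lower end of the spectrum (then the trace becomes $(1-\sqrt{k-1})/2$, again nonzero for $k \ge 3$), or invoke Proposition~\ref{spro}: the inequality $\lambda_k(G) + 1 \le |\lambda_{n-k+2}(\overline{G})|$ makes a uniform gap for one of the two constants equivalent to a uniform gap for the other, after possibly shrinking $\varepsilon_k$. The main obstacle is the graphon passage in the middle paragraph: one needs continuity of the $k$-th eigenvalue under cut convergence and the identification of the trace of $T_W$. An alternative that stays inside the finite-graph world would be a quantitative stability theorem saying that any $G$ for which the Theorem~\ref{th1} chain is $\varepsilon$-tight must be within edit distance $f(\varepsilon)n^2$ of a blow-up of some $k$-vertex graph $H$; the trace of $H$ would then rule out every candidate $H$ when $k \ge 3$, again producing an $\varepsilon_k$ that depends only on $k$.
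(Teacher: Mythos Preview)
Your argument breaks at the trace step. The claim $\operatorname{tr}(T_W)=\int_0^1 W(x,x)\,dx=0$ is not available for graphons: the diagonal of $[0,1]^2$ has measure zero, so $W(x,x)$ is not determined by the graphon, and the zero-diagonal property of adjacency matrices does not pass to the cut-metric limit. Concretely, the complete graphs $K_n$ converge to the constant graphon $W\equiv 1$, a $\{0,1\}$-graphon whose operator has the single nonzero eigenvalue $1$ and hence trace $1$. In your extremal sequence the finite identity $\sum_i\lambda_i(G_n)=0$ is carried entirely by the tail $\lambda_{k+1},\dots,\lambda_n$: each of these is $o(n)$, yet together they sum to $-\Theta(n)$, and this mass disappears eigenvalue-by-eigenvalue in the limit. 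So nothing prevents the limiting operator from having trace $(1+\sqrt{k-1})/2$, and the contradiction you derive is spurious. A genuine obstruction does exist---one can show that such a $W$ must be a step graphon whose diagonal blocks are forced to the value $1$, and then compare two computations of the trace---but this requires first exploiting that $W$ is simultaneously $\{0,1\}$-valued and positive semidefinite, which is real work beyond what you wrote.

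The paper itself does not prove this theorem, noting only that the argument is long and uses the Removal Lemma of Alon, Fischer, Krivelevich and Szegedy together with other analytic tools. Your alternative sketch (edit-closeness to a blow-up of some $k$-vertex $H$) is in that spirit, but the limit $H$ is in general a weighted graph with loops, so ``$\operatorname{tr}H=0$'' is again unavailable and the same extra structural work is needed there too. One minor correction: Proposition~\ref{spro} gives $c_k\le c_{-k+1}$, so it transfers a gap from $c_{-k+1}$ down to $c_k$ but not conversely; the $c_{-k+1}$ bound has to be argued directly.
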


Our proof of Theorem \ref{thub} is quite complicated and uses the Removal
Lemma of Alon, Fischer, Krivelevich, and Szegedy \cite{AFKS00}, together with
other tools of analytic graph theory. Due to its length, it will not be given
in the present paper.

Although Theorem \ref{thub} may cast doubts as to the tightness of the bounds
(\ref{ubc}), we shall show that they can be matched by close lower bounds.

\subsection{Lower bounds on $c_{k}$ and $c_{-k}$}

In \cite{Tay71,Tay77}, Taylor came up with a remarkable class of strongly
regular graphs, which we shall use in several ways to give lower bounds on
$\lambda_{k}\left(  n\right)  $ and $\lambda_{-k}\left(  n\right)  .$ It seems
that Taylor's strongly regular graphs are a cornerstone in spectral graph
theory, and need to be known better. For reader's sake, in Section \ref{NBS}
gives a brief discussion on Taylor's graphs and their complements.

Note that Taylor's graphs contain roughly half of the total number of edges,
and the same holds for their complements. At the same time, almost all
eigenvalues of a Taylor graph are positive, and almost all eigenvalues of its
complement are negative. This combination of properties makes Taylor's graphs
and their complements very suitable for lower bounds on $c_{k}$ and $c_{-k}.$

To begin with, in the following theorem, we shall use Taylor's graphs to show
the tightness of the bounds (\ref{ubc}) for infinitely many, albeit handpicked
values of $k.$ The proof of Theorem \ref{th2} is in Section \ref{ps}.

\begin{theorem}
\label{th2}If $q$ is an odd prime power and $k$ $=q^{2}-q+1,$ then%
\begin{equation}
c_{k}>\frac{1}{2\sqrt{k-1}+1}\text{ \ and \ \ \ }c_{-k+1}>\frac{1}{2\sqrt
{k-1}+1}. \label{lbc}%
\end{equation}

\end{theorem}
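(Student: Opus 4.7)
The plan is to prove the first inequality by exhibiting a single explicit graph of order $n = q^3+1$, and to deduce the second inequality from it by a short limit argument. The graph will be Taylor's strongly regular graph $T(q)$, built from the action of the unitary group $U_3(q)$ on the isotropic points of a Hermitian form over $\mathbb F_{q^2}$; it has exactly three distinct eigenvalues, and its relevance for us is that the multiplicity of its second eigenvalue is large enough to force it to be the $k$-th eigenvalue.

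More precisely, let $T(q)$ have spectrum $\{d^{(1)}, \theta^{(m_\theta)}, \tau^{(m_\tau)}\}$ with $d > \theta > 0 > \tau$; the values of $d$, $\theta$, $\tau$ and the multiplicities can all be read off the SRG parameters of $T(q)$, and in particular $\theta,\tau$ are the roots of $x^2 - (\lambda-\mu)x - (d-\mu) = 0$. A direct multiplicity computation gives $m_\theta \geq q^2 - q = k-1$, so that $\lambda_k(T(q)) = \theta$, and therefore $c_k \geq \theta/(q^3+1)$.

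The next and most delicate step is to verify the tight arithmetic inequality $\theta/(q^3+1) > 1/(2\sqrt{q(q-1)}+1)$. This is where the real difficulty lies: Theorem~\ref{th1} forces $c_k \leq 1/(2\sqrt{k-1})$, so the Taylor construction essentially saturates the upper bound, and the margin between the two sides is only of order $1/q^3$. The inequality has to be proved by plugging in the explicit expression for $\theta$ coming from the SRG parameters and using the elementary estimate $2\sqrt{q(q-1)} < 2q - 1$ (which is just $4q(q-1) < (2q-1)^2$), reducing the claim to a clean algebraic check in $q$. The main obstacle is that no slack can be wasted at this step; everything hinges on knowing $\theta$ exactly.

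Finally, there is nothing new to prove for $c_{-k+1}$: the last assertion of Proposition~\ref{spro} states $\lambda_k(n) + 1 \leq \lambda_{-k+1}(n)$ for all $n \geq k$, so dividing by $n$ and invoking Theorem~\ref{limth} yields $c_{-k+1} \geq c_k > 1/(2\sqrt{k-1}+1)$ at once.
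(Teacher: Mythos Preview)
Your reduction of $c_{-k+1}$ to $c_k$ via Proposition~\ref{spro} and Theorem~\ref{limth} matches the paper. The gap is in the bound for $c_k$.

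First, the paper's Taylor graph $T(q)$ has $v=q^3$ vertices (not $q^3+1$), and its positive non-principal eigenvalue is $\theta=(q-1)/2$, which is far too small: $\theta/q^3$ is of order $1/(2q^2)$, while the target $1/(2\sqrt{k-1}+1)$ is of order $1/(2q)$. It is the \emph{complement} $\overline{T(q)}$ that carries the large eigenvalue $\lambda_2(\overline{T(q)})=(q^2-1)/2$ with multiplicity exactly $q(q-1)=k-1$, and this is the graph the paper actually uses.

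Second, and more importantly, even with $\overline{T(q)}$ a single graph is not enough. One gets $\lambda_k(\overline{T(q)})/q^3=(q^2-1)/(2q^3)$, and your own estimate $2\sqrt{q(q-1)}<2q-1$ then yields
\[
(q^2-1)\bigl(2\sqrt{q^2-q}+1\bigr)<(q^2-1)\cdot 2q=2q^3-2q<2q^3,
\]
so $(q^2-1)/(2q^3)<1/(2\sqrt{k-1}+1)$ and the ``tight arithmetic inequality'' you defer actually fails. The paper's essential extra step is the closed blowup $G=\overline{T(q)}^{[t]}$: by Proposition~\ref{probu}, $\lambda_k(G)=\tfrac12(q^2-1)t+t-1=\tfrac12(q^2+1)t-1$, and letting $t\to\infty$ gives $c_k\geq(q^2+1)/(2q^3)$. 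That ``$+1$'' in the numerator is exactly what flips the inequality to $(q^2+1)/(2q^3)>1/(2\sqrt{k-1}+1)$. You correctly sensed that the margin is only of order $1/q^3$; that is precisely why the single-graph estimate lands on the wrong side and the blowup is indispensable.
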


Unfortunately, the bounds (\ref{lbc}) seem to be close to the best ones that
Taylor graphs can provide. Nevertheless, in the following theorem, we shall
use Taylor graphs to provide general asymptotics of $c_{k}$ and $c_{-k}$ for
any $k:$

\begin{theorem}
\label{thglb}There exists $k_{0}$ such that if $k>k_{0}$, then%
\begin{equation}
c_{k}>\frac{1}{2\sqrt{k-1}+\sqrt[3]{k}},\text{ \ \ and \ \ }c_{-k+1}>\frac
{1}{2\sqrt{k-1}+\sqrt[3]{k}}. \label{glb}%
\end{equation}
\textbf{ }
\end{theorem}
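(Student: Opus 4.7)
The plan is to bootstrap Theorem \ref{th2} from the sparse set $\{q^2-q+1 : q\text{ odd prime power}\}$ to the continuum of large $k$, by exploiting two properties: (i) the Taylor graph $G_q$ of order $n_0$ used in the proof of Theorem \ref{th2} has its second distinct eigenvalue with multiplicity at least $k_0-1$, where $k_0:=q^2-q+1$, so that $\lambda_j(G_q)=\lambda_{k_0}(G_q)$ for every $j$ with $2\leq j\leq k_0$; and (ii) a Hoheisel-type theorem on prime gaps makes this sparse set dense enough in $\mathbb{N}$ to accommodate the $\sqrt[3]{k}$ slack in the target denominator.

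Given a large $k$, I would pick $q$ to be the smallest odd prime power with $k_0\geq k$, equivalently $q\geq\tfrac12(1+\sqrt{4k-3})$. Property (i) together with Theorem \ref{th2} then yields
\[
c_k \;\geq\; \frac{\lambda_k(G_q)}{n_0} \;=\; \frac{\lambda_{k_0}(G_q)}{n_0} \;>\; \frac{1}{2\sqrt{k_0-1}+1}.
\]
The analogous bound for $c_{-k+1}$ follows by the same argument applied to the complement $\overline{G}_q$: its smallest distinct eigenvalue has multiplicity at least $k_0-1$ and is nonpositive in the relevant regime, so $|\lambda_{n_0-k+2}(\overline{G}_q)|\geq|\lambda_{n_0-k_0+2}(\overline{G}_q)|$ and the bound of Theorem \ref{th2} transfers verbatim.

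It remains to verify that $2\sqrt{k_0-1}+1\leq 2\sqrt{k-1}+\sqrt[3]{k}$ for $k$ past some threshold. Via the identity $\sqrt{k_0-1}-\sqrt{k-1}=(k_0-k)/(\sqrt{k_0-1}+\sqrt{k-1})$, this reduces to showing $k_0-k=o(k^{5/6})$. Here the analytic input enters: for any fixed $\theta<2/3$ — for instance $\theta=7/12$ of Huxley, or $\theta=0.525$ of Baker, Harman and Pintz — every sufficiently long interval $[x,x+x^{\theta}]$ contains a prime. Applied at $x\asymp\sqrt{k}$, this provides an odd prime $q\leq\sqrt{k}+k^{\theta/2}$, and expanding gives $k_0-k\leq 2\sqrt{k}\cdot k^{\theta/2}+O(k^{\theta})=O(k^{(1+\theta)/2})=o(k^{5/6})$; consequently $\sqrt{k_0-1}-\sqrt{k-1}=O(k^{\theta/2})=o(k^{1/3})$, which absorbs the additive $1$ and delivers the comparison for all $k$ above some $k_0^{*}$.

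The principal obstacle is precisely this number-theoretic ingredient: a crude bound such as Bertrand's postulate would only yield $k_0-k=\Theta(k)$, producing a $\Theta(\sqrt{k})$-sized error in the denominator rather than the required $\sqrt[3]{k}$. Any of the classical Hoheisel–Ingham–Huxley–BHP prime-gap refinements suffices, and the $\sqrt[3]{k}$ cushion leaves generous slack that makes the proof robust to the exact exponent used. Beyond this, no new graph-theoretic construction is required: the entire argument is a monotonicity-plus-prime-gap upgrade of Theorem \ref{th2}, with property (i) on eigenvalue multiplicities of Taylor graphs already built into that proof.
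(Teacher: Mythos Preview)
Your proposal is correct and essentially matches the paper's proof: both select the smallest prime $q$ with $q^2-q+1\geq k$, invoke the Baker--Harman--Pintz prime-gap bound to control $q$ in terms of $k$, and exploit the multiplicity $q(q-1)$ of $\lambda_2(\overline{T(q)})$ to carry the Theorem~\ref{th2} bound from $k_0=q^2-q+1$ down to $k$. The only differences are bookkeeping---the paper bounds $c_k>1/(2q)$ directly and then shows $2q<2\sqrt{k-1}+\sqrt[3]{k}$, and for $c_{-k+1}$ it simply cites Proposition~\ref{spro} (giving $c_{-k+1}\geq c_k$) rather than your direct argument on the complement.
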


Theorem \ref{thglb}, whose proof is in Section \ref{ps}, shows that the upper
bounds (\ref{ubc}) on $c_{k}$ and $c_{-k}$ are asymptotically tight, although
there is a lot to improve. A particularly weak point of  bounds (\ref{glb}) is
the fact that $k_{0}$ is not known explicitly, due to a number-theoretic
result used in the proof. Below we provide a weaker theorem, with explicit
bounds. It also shows that the bound (\ref{Powbo}) fails for any $k\geq5$ and
$n$ sufficiently large.

\begin{theorem}
\label{thSB}If $5\leq k\leq15,$ then
\[
c_{k}\geq\frac{1}{k-1/2}\text{ \ \ and \ \ }c_{-k+1}\geq\frac{1}{k-1/2}.
\]

If $k\geq16,$ then
\[
c_{k}\geq\frac{1}{4\sqrt{k-1}}\text{ \ \ and \ \ }c_{-k+1}\geq\frac{1}%
{4\sqrt{k-1}}.
\]

\end{theorem}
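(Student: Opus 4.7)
My plan is to give explicit constructions, combining Taylor's strongly regular graphs (Section \ref{NBS}) with basic operations such as disjoint unions, lexicographic products, and complementation, that realize both lower bounds. The bound on $c_{-k+1}$ follows from the bound on $c_k$ by replacing the constructed graph by its complement or by invoking Proposition \ref{spro}, so I focus on $c_k$.

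For the small regime $5 \leq k \leq 15$ the argument is a case-by-case analysis, since the target $1/(k-1/2)$ is sharper than the asymptotic $1/(2\sqrt{k-1})$ in this range. For $k = 7 = q^{2}-q+1$ at $q = 3$, Theorem \ref{th2} already yields $c_{7} > 1/(2\sqrt 6+1) > 1/(7-1/2)$. For the other ten values of $k$, the main tool is the lexicographic product $G[K_{t}]$: if $G$ is regular on $v$ vertices with eigenvalues $\{d,\; r\ (\times m_{r}),\; s\ (\times m_{s})\}$, then $G[K_{t}]$ has eigenvalues
\[
\bigl\{\, (d+1)t - 1,\; (r+1)t - 1\ (\times m_{r}),\; (s+1)t - 1\ (\times m_{s}),\; -1\ (\times v(t-1))\,\bigr\},
\]
so, for $k$ in the right range, $\lambda_{k}(G[K_{t}])/|V(G[K_{t}])| \to (r+1)/v$ as $t \to \infty$. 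Choosing $G$ appropriately from the available menu (the Paley graph on $9$ vertices for $k = 5$, the Petersen graph for $k = 6$, a Taylor graph on $28$ vertices for $k = 8, \ldots$, and so on), one obtains $(r+1)/v \geq 1/(k-1/2)$ in each case. For values of $k$ where no single small graph works, disjoint unions of several graphs --- or a lexicographic product with a larger base, such as the Taylor graph on $126$ vertices at $q = 5$ --- supply the needed bound.

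For the large regime $k \geq 16$, a single uniform construction suffices. Fix a Taylor graph $T$ on $v$ vertices with non-principal positive eigenvalue $r$ of multiplicity $m$; take $s = \lceil k/m \rceil$ disjoint copies of its $K_{t}$-blow-up and let $t \to \infty$. The resulting graph has at least $sm \geq k$ eigenvalues whose ratio to the total order is $(r+1)/v$, so
\[
c_{k} \;\geq\; \frac{(r+1)\,m}{v\,(k+m)}.
\]
A fixed choice of $T$ (for instance $T_{3}$ on $28$ vertices, with $r = 4$ and $m = 7$) makes the right-hand side at least $1/(4\sqrt{k-1})$ for every $k \geq 16$, by elementary algebra. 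The bound on $c_{-k+1}$ follows by replacing $T$ with its complement.

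The hardest step is the small regime: eleven individual verifications must be carried out, and the correct base graph for each $k$ must be identified from the limited supply of regular graphs on $\leq 126$ vertices. For $k = 5$ the target $2/9$ is met exactly by the Paley graph $P_{9}$ blown up by $K_{t}$, leaving no slack and pinning down the construction essentially uniquely. The large regime is by contrast routine once the Taylor base has been fixed.
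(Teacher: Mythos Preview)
Your approach to the small regime $5 \le k \le 15$ is essentially the same as the paper's: both use the closed blowup $H^{[t]}$ (your lexicographic product $H[K_t]$) of a well-chosen strongly regular graph and let $t \to \infty$, so that $c_k \ge (\lambda_k(H)+1)/v(H)$. The paper runs through the SRGs with parameters $(9,4,1,2)$, $(10,3,0,1)$, $(13,6,2,3)$, $(15,6,1,3)$, and $(21,10,3,6)$ to cover $k=5,\ldots,15$; your Paley-on-$9$ and Petersen choices for $k=5,6$ coincide with the first two. (A minor slip: the Taylor graph $T(3)$ in the paper's sense has $3^{3}=27$ vertices, not $28$.)

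Your large-regime argument, however, has a genuine gap. Taking $s=\lceil k/m\rceil$ disjoint copies of a blown-up \emph{fixed} base graph on $v$ vertices yields, as you write,
\[
c_k \;\ge\; \frac{(r+1)\,m}{v\,(k+m)},
\]
and for fixed $r,m,v$ the right-hand side is $\Theta(1/k)$ as $k\to\infty$. But the target $1/(4\sqrt{k-1})$ is $\Theta(1/\sqrt{k})$, so the inequality you claim must fail for all large $k$. It already fails at $k=16$: with your suggested parameters $r=4$, $m=7$, $v=27$ one gets $35/(27\cdot 23)\approx 0.056$, which is below $1/(4\sqrt{15})\approx 0.065$. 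The remedy, and this is what the paper does, is to let the base Taylor graph \emph{grow} with $k$: take the smallest prime $q$ with $q(q-1)+1\ge k$, use a single closed blowup of $\overline{T(q)}$ (no disjoint copies needed), and invoke Bertrand's postulate to guarantee $q<2\sqrt{k-1}$, whence $c_k>1/(2q)>1/(4\sqrt{k-1})$. Disjoint copies of a fixed graph cannot produce a $1/\sqrt{k}$ rate.
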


Theorem \ref{thSB}, whose proof is also in Section \ref{ps}, leaves the
following two questions open:$\medskip$

\begin{question}
Is it true that $c_{3}=1/3?$ \medskip
\end{question}

\begin{question}
Is it true that $c_{4}=1/4?$
\end{question}

\subsection{Bounds on $c_{k}^{\ast}$}

It turns out that $\lambda_{k}^{\ast}\left(  n\right)  $ and $c_{k}^{\ast}$
can be estimated with greater precision than $\lambda_{k}\left(  n\right)  $
and $c_{k}$. We start by establishing a crucial connection between
$c_{k}^{\ast}$ and the existence of certain symmetric $\left(  -1,1\right)
$-matrices. Thus, write $\mathbb{U}_{n}$ for the set of symmetric $\left(
-1,1\right)  $-matrices or order $n.$\medskip

\begin{theorem}
\label{thMx}If $A$ is a symmetric $\left(  -1,1\right)  $-matrix of order $n$,
with $\lambda_{k}^{\ast}\left(  A\right)  =n/\sqrt{k},$ then
\[
c_{k+1}^{\ast}=\frac{1}{2\sqrt{k}}.
\]

\end{theorem}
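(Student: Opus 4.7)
The upper bound $c_{k+1}^{\ast}\leq 1/(2\sqrt{k})$ is immediate from Theorem~\ref{th1} (applied with $k$ replaced by $k+1$), so my task reduces to constructing a family of graphs $G_{m}$ with $|V(G_{m})|\to\infty$ and $\lambda_{k+1}^{\ast}(G_{m})/|V(G_{m})|\to 1/(2\sqrt{k})$. The plan is a doubling-then-blow-up built from $A$.

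I first pin down the structure that the hypothesis forces on $A$: since every entry of $A$ squares to $1$ one has $\mathrm{tr}(A^{2})=n^{2}$, and combined with $\lambda_{k}^{\ast}(A)=n/\sqrt{k}$ the chain $n^{2}=\sum_{i=1}^{n}\lambda_{i}^{\ast}(A)^{2}\geq\sum_{i=1}^{k}\lambda_{i}^{\ast}(A)^{2}\geq k(n/\sqrt{k})^{2}=n^{2}$ is an equality throughout, so $A$ has rank exactly $k$ with $\lambda_{1}^{\ast}(A)=\cdots=\lambda_{k}^{\ast}(A)=n/\sqrt{k}$ and all other singular values zero. Next, I apply a doubling trick to arrange $A\mathbf{1}=0$: with $T=\bigl(\begin{smallmatrix}1&-1\\-1&1\end{smallmatrix}\bigr)$, I replace $A$ by $A'=T\otimes A$, a symmetric $(-1,1)$-matrix of order $n'=2n$. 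Since the only nonzero eigenvalue of $T$ is $2$, $A'$ has rank $k$ and its top $k$ singular values are all $2n/\sqrt{k}=n'/\sqrt{k}$; and $T\mathbf{1}_{2}=0$ gives $A'\mathbf{1}_{n'}=0$. After renaming $A'\to A$ (and $n'\to n$), I may assume $A\mathbf{1}=0$.

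Now I perform the blow-up. Set $B=(J_{n}+A)/2\in\{0,1\}^{n\times n}$, $\tilde{B}_{m}=B\otimes J_{m}$, and $M_{m}=\tilde{B}_{m}-D\otimes I_{m}$ with $D=\operatorname{diag}(B)$. Then $M_{m}$ is the adjacency matrix of a graph $G_{m}$ on $nm$ vertices (each vertex of $B$ blown up into $m$ clones, with complete bipartite blocks between parts $i\neq j$ whenever $B_{ij}=1$ and a $K_{m}$ inside part $i$ whenever $B_{ii}=1$). Since $\|D\otimes I_{m}\|\leq 1$, Weyl's inequality for singular values yields $|\lambda_{k+1}^{\ast}(M_{m})-\lambda_{k+1}^{\ast}(\tilde{B}_{m})|\leq 1$. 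To evaluate $\lambda_{k+1}^{\ast}(\tilde{B}_{m})$ I write $2\tilde{B}_{m}=J_{nm}+A\otimes J_{m}$: because $A\mathbf{1}=0$, the vector $\mathbf{1}_{nm}=\mathbf{1}_{n}\otimes\mathbf{1}_{m}$ lies in $\ker(A\otimes J_{m})$, while every nonzero eigenvector of $A\otimes J_{m}$ has the form $u\otimes(\mathbf{1}_{m}/\sqrt{m})$ with $u\perp\mathbf{1}_{n}$, hence is orthogonal to $\mathbf{1}_{nm}$. Thus $J_{nm}$ and $A\otimes J_{m}$ are simultaneously diagonalizable with disjoint nonzero eigenspaces; the eigenvalues of $\tilde{B}_{m}$ come out to $nm/2$ (once), $\pm mn/(2\sqrt{k})$ (total multiplicity $k$), and zero, so $\lambda_{k+1}^{\ast}(\tilde{B}_{m})=mn/(2\sqrt{k})$. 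Dividing by $|V(G_{m})|=nm$ and letting $m\to\infty$ then gives $c_{k+1}^{\ast}\geq 1/(2\sqrt{k})$, matching the upper bound.

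The chief obstacle is the one dispatched by the doubling trick: for a generic $A$ satisfying the hypothesis, $J_{n}$ and $A$ need not commute, so the spectrum of $B$ is opaque and direct Weyl-type bounds on $\lambda_{k+1}^{\ast}(B)$ lose a constant factor that would prevent a match with the tight $1/(2\sqrt{k})$. The tensor product with $T$ is the cheap device that forces $\mathbf{1}\in\ker A$ at the harmless cost of doubling $n$, while leaving the crucial ratio $\lambda_{k}^{\ast}(A)/n=1/\sqrt{k}$ exactly preserved.
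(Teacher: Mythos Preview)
Your proof is correct and follows essentially the same route as the paper's: your doubling matrix $T\otimes A$ is exactly the paper's block matrix $\bigl(\begin{smallmatrix}A&-A\\-A&A\end{smallmatrix}\bigr)$, and your $\tilde B_{m}=\tfrac12(J_{nm}+A\otimes J_{m})$ coincides with the paper's $B=\tfrac12(A'\otimes J_{t}+J_{2nt})$ after the renaming. Your write-up is a touch more explicit about why $J_{nm}$ and $A\otimes J_{m}$ are simultaneously diagonalizable with disjoint nonzero eigenspaces, but the construction and the Weyl bound are the same.
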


\begin{proof}
Let $A=\left[  a_{i,j}\right]  \in\mathbb{U}_{n},$ with $\lambda_{k}^{\ast
}\left(  A\right)  =n/\sqrt{k}.$ Since%
\[
\sum_{i=1}^{k}\left(  \lambda_{i}^{\ast}\left(  A\right)  \right)  ^{2}\geq
k\left(  n/\sqrt{k}\right)  ^{2}=n^{2}=\sum_{i=1}^{n}\sum_{l=1}^{n}a_{i,j}%
^{2}=\sum_{i=1}^{n}\left(  \lambda_{i}^{\ast}\left(  A\right)  \right)  ^{2},
\]
we see that
\[
\lambda_{1}^{\ast}\left(  A\right)  =\cdots=\lambda_{k}^{\ast}\left(
A\right)  =n/\sqrt{k}\text{ \ \ and \ \ \ }\lambda_{i}^{\ast}\left(  A\right)
=0\text{ for }k<i\leq n\text{.}%
\]
Define a matrix $A^{\prime}\in\mathbb{U}_{2n}$ by%
\[
A^{\prime}=\left[
\begin{array}
[c]{cc}%
A & -A\\
-A & A
\end{array}
\right]  ,
\]
and note that all rowsums of $A^{\prime}$ are zero; therefore $0$ is an
eigenvalue of $A^{\prime}$\ with eigenvector\ $\mathbf{j}_{2n}.$ Now, define a
symmetric $\left(  0,1\right)  $-matrix $B$ by
\[
B=\frac{1}{2}\left(  A^{\prime}\otimes J_{t}+J_{2nt}\right)  .
\]
The order of $B$ is $2nt.$ Obviously $\lambda_{1}^{\ast}\left(  B\right)  =nt$
and
\begin{align*}
\lambda_{2}^{\ast}\left(  B\right)   &  =\cdots=\lambda_{k+1}^{\ast}\left(
B\right)  =nt/\sqrt{k},\\
\lambda_{i}^{\ast}\left(  B\right)   &  =0,\text{ \ \ }i=k+1,\ldots,nt.
\end{align*}
Now, zero the diagonal of $B$ and write $A$ for the resulting matrix. Clearly
$A$ is the adjacency matrix of some graph $G$ of order $2nt.$ Using Weyl's
inequalities (see Proposition \ref{proW0}), we find that
\begin{align*}
\lambda_{k+1}^{\ast}\left(  G\right)   &  =\lambda_{k+1}^{\ast}\left(
B-\left(  B-A\right)  \right)  \geq\lambda_{k+1}^{\ast}\left(  B\right)
-\lambda_{1}^{\ast}\left(  B-A\right)  \\
&  \geq\lambda_{k+1}^{\ast}\left(  B\right)  -\lambda_{1}^{\ast}\left(
I_{2nt}\right)  \\
&  =\frac{2nt}{2\sqrt{k}}-1.
\end{align*}
Letting $t\rightarrow\infty,$ we get%
\[
c_{k+1}^{\ast}=\frac{1}{2\sqrt{k}},
\]
completing the proof.
\end{proof}

Theorem \ref{thMx} motivates the introduction of a class of symmetric $\left(
-1,1\right)  $-matrices, which extend symmetric Hadamard matrices in a natural way.

\subsection{An extension of symmetric Hadamard matrices}

Write $n\left(  A\right)  $ for the order of a square matrix $A,$ and let
$\mathbb{S}_{k}$ be the set of symmetric $\left(  -1,1\right)  $-matrices with
$\lambda_{k}^{\ast}\left(  A\right)  =n\left(  A\right)  /\sqrt{k}.$ Note that
$\mathbb{S}_{k}$ can contain matrices of different order; for that matter, if
$\mathbb{S}_{k}$ is nonempty, then it is infinite. Note also that if $A\in$
$\mathbb{S}_{k},$ and a matrix $B$ can be obtained by permutations or
negations performed simultaneously on rows and columns of $A,$ then $B\in$
$\mathbb{S}_{k}$ as well; the reason is that singular values are not affected
by such operations.

As in the proof of Theorem \ref{thMx}, one can check the validity of the
following statement.

\begin{proposition}
\label{proma}A symmetric $\left(  -1,1\right)  $-matrix $A$ belongs to
$\mathbb{S}_{k}$ if and only if
\[
\lambda_{1}^{\ast}\left(  A\right)  =\cdots=\lambda_{k}^{\ast}\left(
A\right)  =n\left(  A\right)  /\sqrt{k},\text{ \ \ and \ \ }\lambda_{i}^{\ast
}\left(  A\right)  =0\text{\ for }k<i\leq n\left(  A\right)  \text{.}%
\]

\end{proposition}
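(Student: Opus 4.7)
The plan is to observe that everything needed has already been done inside the proof of Theorem~\ref{thMx}; the proposition merely isolates and makes formal that computation, so I would essentially reproduce those two lines. The reverse implication is immediate from the definition of $\mathbb{S}_k$, so the only content is the forward direction: showing that the single equation $\lambda_k^{\ast}(A)=n(A)/\sqrt{k}$ forces the entire singular spectrum to have the stated shape.

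First I would record the identity
\[
\sum_{i=1}^{n}\bigl(\lambda_{i}^{\ast}(A)\bigr)^{2}
=\operatorname{tr}(A^{2})=\sum_{i,j}a_{i,j}^{2}=n^{2},
\]
which holds for every symmetric $(-1,1)$-matrix of order $n=n(A)$, since $A^{2}=A^{\top}A$ and every entry of $A$ has square $1$. Next, I would use the hypothesis together with the ordering $\lambda_{1}^{\ast}(A)\geq\cdots\geq\lambda_{n}^{\ast}(A)$ to bound
\[
\sum_{i=1}^{k}\bigl(\lambda_{i}^{\ast}(A)\bigr)^{2}
\geq k\bigl(\lambda_{k}^{\ast}(A)\bigr)^{2}
=k\cdot\frac{n^{2}}{k}=n^{2}.
\]
Comparing these two displays, the tail satisfies $\sum_{i>k}(\lambda_{i}^{\ast}(A))^{2}\leq 0$, so $\lambda_{i}^{\ast}(A)=0$ for $k<i\leq n$; and the top-$k$ bound must be tight, forcing $\lambda_{1}^{\ast}(A)=\cdots=\lambda_{k}^{\ast}(A)=n/\sqrt{k}$.

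For the converse, if $A$ is a symmetric $(-1,1)$-matrix whose singular values have the stated form, then in particular $\lambda_{k}^{\ast}(A)=n(A)/\sqrt{k}$, which is the defining condition of $\mathbb{S}_{k}$.

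There is essentially no obstacle here: the argument is the opening paragraph of the proof of Theorem~\ref{thMx} repackaged as a standalone equivalence, and the only thing one has to be careful about is that the trace identity requires symmetry of $A$ (so that $A^{2}=A^{\top}A$) and the $(-1,1)$ condition (so that $\operatorname{tr}(A^{2})=n^{2}$). Both are in the hypothesis, so the proof is a one-paragraph AM--QM / tight-sum-of-squares argument.
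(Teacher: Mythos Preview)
Your proposal is correct and is exactly the approach the paper intends: the paper does not give a standalone proof of this proposition but simply says ``As in the proof of Theorem~\ref{thMx}, one can check the validity of the following statement,'' and the opening lines of that proof are precisely the tight sum-of-squares argument you wrote out.
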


Here is a summary of some properties of $\mathbb{S}_{k}$:\medskip

\qquad(1) If $A\in\mathbb{S}_{k}$ then $-A\in\mathbb{S}_{k};$

\qquad(2) If $A$ is a symmetric $\left(  -1,1\right)  $-matrix of rank 1, then
$A\in\mathbb{S}_{1};$ thus, $J_{n}\in\mathbb{S}_{1};$

\qquad(3) If $H$ is a symmetric Hadamard matrix of order $k,$ then
$H\in\mathbb{S}_{k};$

\qquad(4) If $A\in\mathbb{S}_{k}$ and $B\in\mathbb{S}_{l},$ then $A\otimes
B\in\mathbb{S}_{kl};$ hence, if $\mathbb{S}_{k}\neq\varnothing,$ then
$\mathbb{S}_{2k}\neq\varnothing;$

\qquad(5) If $A\in\mathbb{S}_{k}$, then $A\otimes J_{n}$ $\in\mathbb{S}_{k}$
for any $n\geq1;$ hence $\mathbb{S}_{k}$ is infinite;

\qquad(6) If $\mathbb{S}_{k}\neq\varnothing$, then $\mathbb{S}_{k}$ contains
matrices with all their rowsums equal to zero.\medskip

We omit the proofs of (1)-(5), but here is sketch of a proof of (6):\emph{
}the rank of the matrix%
\[
K=\left[
\begin{array}
[c]{cc}%
1 & -1\\
-1 & 1
\end{array}
\right]
\]
is $1$, so $K\in S_{1}.$ Now, if $A\in S_{k},$ then $K\otimes A\in S_{k},$ and
the rowsums of $K\otimes A$ are zero.\emph{\medskip}

Properties (1)-(6) allow to show that $\mathbb{S}_{k}$ contains matrices with
some special properties, as in the following proposition:

\begin{proposition}
\label{props}If $A\in\mathbb{S}_{k},$ then there is a $B\in\mathbb{S}_{2k}$
such that:

(i) $B$ has exactly $k$ positive and exactly $k$ negative eigenvalues;

(ii) the rowsums of $B$ are equal to $0.$
\end{proposition}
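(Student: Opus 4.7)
The plan is to construct $B$ as a tensor product of $A$ with two other small matrices: one to enforce vanishing rowsums, and one to balance the signs of the eigenvalues. Concretely, let $K = \left(\begin{smallmatrix} 1 & -1 \\ -1 & 1 \end{smallmatrix}\right)$, which is symmetric of rank one and has vanishing rowsums (so $K \in \mathbb{S}_{1}$ by property (2)), and let $H_{2} = \left(\begin{smallmatrix} 1 & 1 \\ 1 & -1 \end{smallmatrix}\right)$, the $2\times 2$ Hadamard matrix (so $H_{2} \in \mathbb{S}_{2}$ by property (3)). I would then set $B := K \otimes A \otimes H_{2}$.

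By repeated application of property (4), $B$ is a symmetric $(-1,1)$-matrix belonging to $\mathbb{S}_{1\cdot k\cdot 2}=\mathbb{S}_{2k}$. Item (ii) is immediate: every rowsum of $K$ is zero, and rowsums of Kronecker products multiply, so every rowsum of $B$ vanishes as well.

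For (i), Proposition \ref{proma} forces the spectrum of $A$ to consist of $p$ copies of $n(A)/\sqrt{k}$ and $q$ copies of $-n(A)/\sqrt{k}$ with $p+q=k$, together with $n(A)-k$ zeros; the spectra of $K$ and $H_{2}$ are $\{2,0\}$ and $\{\sqrt{2},-\sqrt{2}\}$ respectively. The nonzero eigenvalues of $B$ are precisely the products of nonzero eigenvalues of the three factors, so for each nonzero eigenvalue $\mu$ of $A$, multiplication by the two signs $\pm\sqrt{2}$ coming from $H_{2}$ contributes exactly one positive and one negative nonzero eigenvalue of $B$. Hence $B$ has exactly $k$ positive and exactly $k$ negative eigenvalues, establishing (i). There is no serious obstacle here: the key observation is that $H_{2}$ symmetrizes any unbalanced sign pattern in the spectrum while $K$ imposes zero rowsums, and from that point on everything reduces to routine tensor-product bookkeeping.
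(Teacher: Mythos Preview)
Your proposal is correct and essentially identical to the paper's own argument, which also takes $B=K\otimes(H_{2}\otimes A)$; the only difference is the ordering of the tensor factors, which is immaterial since $X\otimes Y$ and $Y\otimes X$ are permutation-similar and rowsums still multiply. Your justification of (i) via the sign-symmetrizing effect of the eigenvalues $\pm\sqrt{2}$ of $H_{2}$ is exactly the point the paper leaves implicit.
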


To check this proposition, set
\[
H_{2}=\left[
\begin{array}
[c]{cc}%
1 & 1\\
1 & -1
\end{array}
\right]  ,
\]
and let $B=K\otimes\left(  H_{2}\otimes A\right)  ;$ obviously $B\in$
$\mathbb{S}_{2k},$ and $B$ satisfies \emph{(i) }and\emph{ (ii)}.\bigskip

The principal question about $\mathbb{S}_{k}$ is the following one:

\begin{problem}
\label{probS}For which $k$ is $\mathbb{S}_{k}$ nonempty?
\end{problem}

Below we shall show that $\mathbb{S}_{k}$ is empty if $k$ is odd and is not a
square. On the positive side, Property (3) implies that there are infinitely
many $k$ for which $\mathbb{S}_{k}$ is nonempty. In particular, Paley's
construction of symmetric Hadamard matrices (see, e.g., \cite{CrKh07}) implies
the following fact:

\begin{proposition}
If $p$ is a prime power and $p=1$ $\operatorname{mod}$ $4,$ then
$\mathbb{S}_{2\left(  p+1\right)  }$ is not empty.
\end{proposition}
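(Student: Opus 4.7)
The plan is to invoke Paley's type II construction and then apply property (3) of the list preceding the statement. Recall that for a prime power $p\equiv 1\pmod 4$, one forms the Jacobsthal matrix $Q$ of order $p$: indexing rows and columns by $\mathbb{F}_p$ and setting $Q_{ij}=\chi(j-i)$, where $\chi$ is the quadratic residue character (with $\chi(0)=0$). Since $p\equiv 1\pmod 4$, $-1$ is a square in $\mathbb{F}_p$, so $\chi(-x)=\chi(x)$ and $Q$ is symmetric; it also satisfies $QQ^{T}=pI-J$ and $QJ=JQ=0$. Bordering $Q$ with a first row and column of ones and a $0$ in the corner produces a symmetric conference matrix $C$ of order $p+1$ with $CC^{T}=pI$ and $C^{T}=C$.

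Next, replace each $\pm 1$ entry of $C$ by the corresponding $\pm I_{2}$, and each $0$ entry (the diagonal of $C$) by $H_{2}$, where $H_{2}$ is the $2\times 2$ Hadamard matrix defined earlier in the paper. The resulting matrix $H$ has order $2(p+1)$, has entries in $\{-1,1\}$, is symmetric (because $C$ is symmetric and the substituted blocks $\pm I_{2}$ and $H_{2}$ are symmetric), and a short block computation using $CC^{T}=pI$ and $H_{2}H_{2}^{T}=2I_{2}$ gives $HH^{T}=2(p+1)I$. Thus $H$ is a symmetric Hadamard matrix of order $2(p+1)$. This construction is standard and can be quoted from \cite{CrKh07}.

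Finally I would apply property (3) of $\mathbb{S}_{k}$: because $HH^{T}=2(p+1)I$, every singular value of $H$ equals $\sqrt{2(p+1)}=n(H)/\sqrt{2(p+1)}$. By Proposition \ref{proma} this is exactly the defining condition for membership in $\mathbb{S}_{2(p+1)}$, so $H\in\mathbb{S}_{2(p+1)}$, and the class is nonempty.

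There is no real obstacle here: the only step requiring attention is the symmetry of $H$, which reduces to the symmetry of the Jacobsthal matrix $Q$, and this is precisely why the hypothesis $p\equiv 1\pmod 4$ is needed. Everything else is an immediate consequence of property (3) and Proposition \ref{proma}, so the proof reduces to writing down Paley's type II matrix and citing the standard fact that it is symmetric Hadamard whenever $p\equiv 1\pmod 4$.
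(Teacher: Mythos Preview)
Your approach is exactly the paper's: invoke Paley's type~II construction to produce a symmetric Hadamard matrix of order $2(p+1)$, then apply property~(3). The paper in fact gives no further detail than that single sentence and a reference to \cite{CrKh07}.

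However, the explicit substitution you wrote down is incorrect. Replacing each $\pm 1$ entry of the conference matrix $C$ by $\pm I_{2}$ produces off-diagonal $2\times 2$ blocks with zero entries, so the resulting matrix is not a $\left(-1,1\right)$-matrix at all, let alone Hadamard. (A quick check also shows the row norms come out wrong: each block row contributes $H_{2}H_{2}^{T}+pI_{2}=(p+2)I_{2}$, not $2(p+1)I_{2}$.) The standard Paley~II construction instead takes
\[
H \;=\; I_{p+1}\otimes\begin{pmatrix}1&-1\\-1&-1\end{pmatrix}\;+\;C\otimes\begin{pmatrix}1&1\\1&-1\end{pmatrix},
\]
i.e.\ one replaces the diagonal zeros of $C$ by a symmetric $2\times 2$ Hadamard block and the $\pm 1$ entries by $\pm H_{2}$ (not $\pm I_{2}$). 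With this correction your verification goes through: both blocks are symmetric, $C$ is symmetric because $p\equiv 1\pmod 4$, and the cross term vanishes since $H_{2}\begin{pmatrix}1&-1\\-1&-1\end{pmatrix}$ is skew-symmetric. So the plan is sound; only the block substitution needs to be fixed.
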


We shall prove more definite assertions about $\mathbb{S}_{k}$ by using the
fact that the singular values of a real symmetric matrix are the absolute
values of its eigenvalues.

\begin{proposition}
\label{proq}If $A\in\mathbb{S}_{k},$ then either $k$ is an exact square, or
$A$ has the same number of positive and negative eigenvalues.
\end{proposition}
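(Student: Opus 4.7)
The plan is to pin down the spectrum of $A\in\mathbb{S}_k$ as sharply as possible and then extract an arithmetic constraint from the trace.

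First, by Proposition \ref{proma}, if $A$ has order $n$ and $A\in\mathbb{S}_k$, then the nonzero singular values of $A$ are all equal to $n/\sqrt{k}$, each with multiplicity accounted for so that exactly $k$ of them are nonzero. Since the singular values of a real symmetric matrix are the absolute values of its eigenvalues, the only possible eigenvalues of $A$ are $n/\sqrt{k}$, $-n/\sqrt{k}$, and $0$. Let $p$ and $q$ denote the multiplicities of $n/\sqrt{k}$ and $-n/\sqrt{k}$ respectively; then $p+q=k$.

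Next I would exploit the fact that $A$ is a $(-1,1)$-matrix, so every diagonal entry is $\pm 1$, and therefore
\[
\mathrm{tr}(A)\in\mathbb{Z}.
\]
On the other hand, computing the trace from the spectrum gives
\[
\mathrm{tr}(A)=(p-q)\,\frac{n}{\sqrt{k}}.
\]
Now suppose that $p\neq q$, i.e., $A$ does \emph{not} have the same number of positive and negative eigenvalues. Then the right-hand side above is nonzero, so in particular $\mathrm{tr}(A)\neq 0$, and we may solve for $\sqrt{k}$:
\[
\sqrt{k}=\frac{(p-q)\,n}{\mathrm{tr}(A)}.
\]
The right-hand side is a ratio of integers, hence $\sqrt{k}\in\mathbb{Q}$. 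Since $k$ is a positive integer, this forces $\sqrt{k}\in\mathbb{Z}$, i.e., $k$ is an exact square.

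This gives the dichotomy of the proposition: either $p=q$, or $k$ is a perfect square. There is really no obstacle here — the only thing to check is the harmless case $\mathrm{tr}(A)=0$, but this only occurs when $p=q$ (which is the other alternative in the statement), so nothing is lost. The main conceptual point is simply that the rigidity imposed by $A\in\mathbb{S}_k$ collapses the spectrum onto a single value $n/\sqrt{k}$ up to sign, which then interacts with the integrality of the trace to produce an arithmetic condition on $k$.
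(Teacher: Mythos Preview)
Your proof is correct and follows essentially the same approach as the paper: both arguments compute $\mathrm{tr}\,A=(p-q)\,n/\sqrt{k}$, invoke integrality of the trace of a $(-1,1)$-matrix, and conclude that either $p=q$ or $\sqrt{k}\in\mathbb{Q}$, hence $k$ is a perfect square. The only cosmetic difference is that you cite Proposition~\ref{proma} for the spectral structure, whereas the paper rederives the equality $kl^2=n^2$ from $\mathrm{tr}\,A^2$ on the spot.
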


\begin{proof}
Let $A=\left[  a_{i,j}\right]  \in\mathbb{S}_{k}.$ Setting $l=\lambda
_{1}^{\ast}\left(  A\right)  $ and $n=n\left(  A\right)  ,$ we have
\[
kl^{2}=\sum_{\lambda_{i}\left(  A\right)  >0}\lambda_{i}^{2}\left(  A\right)
+\sum_{\lambda_{i}\left(  A\right)  <0}\lambda_{i}^{2}\left(  A\right)
=\mathrm{tr}\text{ }A^{2}=\sum_{i=1}^{n}\sum_{l=1}^{n}a_{i,j}^{2}=n^{2}.
\]
On the other hand, writing $n_{+}$ and $n_{-}$ for the number of positive and
negative eigenvalues of $A,$ we see that%
\[
\left(  n_{+}-n_{-}\right)  l=\sum_{\lambda_{i}\left(  A\right)  >0}%
\lambda_{i}\left(  A\right)  +\sum_{\lambda_{i}\left(  A\right)  <0}%
\lambda_{i}\left(  A\right)  =\mathrm{tr}\text{ }A.
\]
Since $\mathrm{tr}$ $A$ is an integer, either $l$ is rational or $n_{+}=n_{-}%
$. Since $l=n/\sqrt{s}$, it may be rational only if $\sqrt{s}$ is an integer,
completing the proof.
\end{proof}

\begin{corollary}
If $k$ is odd and $\mathbb{S}_{k}$ is nonempty, then $k$ is an exact square.
\end{corollary}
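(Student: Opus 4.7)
The plan is to derive this directly from Proposition \ref{proq} by a parity argument. Assume, for contradiction, that $k$ is odd, $\mathbb{S}_{k}$ is nonempty, and $k$ is not an exact square. Pick any $A\in\mathbb{S}_{k}$. By Proposition \ref{proma}, $A$ has exactly $k$ nonzero singular values (all equal to $n(A)/\sqrt{k}$) and $n(A)-k$ zero singular values.

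Since $A$ is a real symmetric matrix, the singular values of $A$ are the absolute values of its eigenvalues. Hence $A$ has exactly $k$ nonzero eigenvalues, so with $n_{+}$ and $n_{-}$ denoting the number of positive and negative eigenvalues of $A$, we have $n_{+}+n_{-}=k$.

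Now apply Proposition \ref{proq}: since $k$ is not an exact square, we must be in the second alternative, namely $n_{+}=n_{-}$. But then $k=n_{+}+n_{-}=2n_{+}$ is even, contradicting the assumption that $k$ is odd. Therefore $k$ must be an exact square, completing the proof. There is no real obstacle here since all the work is already packed into Proposition \ref{proq}; the only step worth noting is the observation that symmetry of $A$ identifies singular values with absolute values of eigenvalues, which lets us convert the count of nonzero singular values into the parity constraint $n_{+}+n_{-}=k$.
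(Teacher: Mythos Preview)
Your argument is correct and is exactly the intended one: the paper states the corollary without proof immediately after Proposition~\ref{proq}, relying on precisely the observation you make, namely that $n_{+}+n_{-}=k$ (via Proposition~\ref{proma} and symmetry of $A$), so $n_{+}=n_{-}$ forces $k$ to be even.
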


It is interesting to see what we can say about $\mathbb{S}_{k}$ for small $k,$
say for $k\leq10.$ First, the above corollary implies that $\mathbb{S}_{k}$ is
empty for $k=3,5,$ and $7.$ Below we shall show that the converse of
Proposition \ref{proq} is partially true as well, that is to say: \emph{if
}$k$\emph{ is an exact square, then }$S_{k}$\emph{ is nonempty.} Thus, in view
of properties (1)-(5), we see that $\mathbb{S}_{k}$ is nonempty for
$k=1,2,4,8,$ and $9.$ The first unknown cases are $k=6$ and $k=10.$

\begin{question}
Is $\mathbb{S}_{6}$ empty?
\end{question}

\medskip

\subsection{\label{cons}Constructions of matrices in \ $\mathbb{S}_{s^{2}}$}

In this subsection we shall construct classes of symmetric $\left(
-1,1\right)  $-matrices, which show that $\mathbb{S}_{k}$ is nonempty if $k$
is an exact square or is the double of an exact square. Except for this
primary goal, our matrices must have other specific properties, which are
necessary for subsequent applications; thus the statements are somewhat involved.

The proofs of Theorems \ref{thKHN}, \ref{thj}, and \ref{thj1} are variations
of Kharaghani's method \cite{Kha85} for constructing Hadamard matrices. Our
approach suggests that this method is generic and can be used to construct
$\left(  -1,1\right)  $-matrices that are more general than Hadamard matrices.

The proofs of Theorems \ref{thKHN}, \ref{thj}, and \ref{thj1} are in Section
\ref{ps}.

\begin{theorem}
\label{thKHN}For any integer $s\geq2$, there are an integer $n\geq s$ and a
symmetric $\left(  -1,1\right)  $-matrix $B$ of order $ns$ such that:

(i) $B$ has exactly $s^{2}$ nonzero eigenvalues, of which $\binom{s-1}{2}$ are
equal to $n,$ and $\binom{s+1}{2}$ are equal to $-n$;

(ii) the rowsums of $B$ are equal to $0;$

(iii) the diagonal entries of $B$ are equal to $-1.$
\end{theorem}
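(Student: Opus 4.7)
The plan is to adapt Kharaghani's block-amalgamation technique \cite{Kha85} using a symmetric Latin square of order $s$ together with a seed of symmetric $(-1,1)$-matrices whose spectra are controlled by Taylor's strongly regular graphs (Section~\ref{NBS}). First I would fix a symmetric Latin square $L$ on the alphabet $\{1,\dots,s\}$, which is known to exist for every $s\ge 2$. Next I would produce a seed family $A_{1},\dots,A_{s}$ of symmetric $(-1,1)$-matrices of a common order $n=n(s)\ge s$ with diagonal entries $-1$, rowsums equal to zero, and joint Kharaghani-type orthogonality relations, typically of the form $A_{i}A_{j}+A_{j}A_{i}\in\mathrm{span}\{I,J\}$ with prescribed scalar coefficients. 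Natural candidates come from Seidel-type matrices attached to Taylor graphs, which supply both the commutation structure and the trace control needed downstream.

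With the seed in place, the block matrix $B=[A_{L(i,j)}]_{i,j=1}^{s}$ of order $ns$ is essentially forced. Properties (ii) and (iii) fall out immediately: each row of $B$ is a concatenation of $s$ rows, one from each $A_{k}$ (since $L$ is a Latin square), so it sums to zero; and the diagonal blocks $A_{L(i,i)}$ contribute $-1$'s along the diagonal of $B$. For (i) I would write $B=\sum_{k=1}^{s}P_{k}\otimes A_{k}$, where $P_{k}$ is the symmetric $0/1$ matrix marking the positions of symbol $k$ in $L$, and analyze $B^{2}$. The idea is that the Kharaghani-type identities among the $A_{k}$'s collapse this sum into a combination of $I$ and block terms of the form $J\otimes(\cdot)$, at which point the spectrum of $B^{2}$ is transparent; the signs of the eigenvalues of $B$ are then pinned down by a trace computation, and the multiplicities $\binom{s-1}{2}$ and $\binom{s+1}{2}$ should emerge after accounting for the single zero eigenvalue forced by the rowsum-zero condition acting along the all-ones vector in $\mathbb{R}^{ns}$.

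The main obstacle is the seed step: producing $A_{1},\dots,A_{s}$ whose joint algebraic identities force the spectrum of $B$ into exactly the claimed pattern of $\pm n$ eigenvalues, uniformly in $s\ge 2$. Taylor's construction supplies such families only for restricted orders, so the argument will need to let $n$ grow with $s$ along an admissible infinite sequence of orders; this is precisely why the theorem asserts only the existence of \emph{some} $n\ge s$, rather than giving $n$ explicitly in terms of $s$. Once the seed is in hand, the spectral bookkeeping is a careful but mechanical linear-algebra exercise driven by the identities satisfied by the $P_{k}$'s and the $A_{k}$'s.
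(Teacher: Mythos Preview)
Your overall framework is exactly the one the paper uses: a symmetric Latin square $L$ of order $s$, a seed family $A_{1},\dots,A_{s}$ of symmetric $(-1,1)$-matrices of common order $n$ with diagonal $-1$ and zero rowsums, and the block matrix $B=[A_{L(i,j)}]$. Your derivation of (ii) and (iii) from these properties is also correct. The gap is in the seed step, where you reach for Taylor graphs and anticommutator relations of the form $A_iA_j+A_jA_i\in\mathrm{span}\{I,J\}$. This is both the wrong source and an unnecessarily weak relation, and as you yourself note, you do not see how to make it go through.

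The paper's seed is far simpler: take pairwise orthogonal $(-1,1)$-vectors $\mathbf{x}_1,\dots,\mathbf{x}_s$ of dimension $n\ge s$ that are also orthogonal to $\mathbf{j}_n$ (e.g., the last $s$ rows of a normalized Hadamard matrix of order $n>s$), and set $A_k=-\mathbf{x}_k\mathbf{x}_k^{\top}$. These are rank-one symmetric $(-1,1)$-matrices with diagonal $-1$ and rowsums $0$, and they satisfy the much stronger identity $A_pA_q=0$ for $p\neq q$ and $A_p^2=-nA_p$. With this, $B^2$ is block-diagonal with every diagonal block equal to $-n(A_1+\cdots+A_s)$, a matrix of rank $s$ with all nonzero eigenvalues equal to $n^2$; hence $B$ has exactly $s^2$ nonzero eigenvalues, each $\pm n$. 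The trace argument you describe then fixes the split $\binom{s-1}{2}$ versus $\binom{s+1}{2}$. Taylor's graphs play no role in this theorem; in the paper they are used only for the lower bounds on $c_k$ and $c_{-k}$. One small correction: the rowsum-zero condition does not force a ``single'' zero eigenvalue; the kernel of $B$ has dimension $ns-s^2$, as is clear once you know the rank of $B^2$.
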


Note that clause \emph{(i)} of Theorem \ref{thKHN} implies that $B\in
\mathbb{S}_{s^{2}}.$ Further, negating the matrix $B,$ we get the following variation.

\begin{corollary}
For any integer $s\geq2$, there are an integer $n\geq s$ and a symmetric
$\left(  -1,1\right)  $-matrix $B$ of order $ns$ such that:

(i) $B$ has exactly $s^{2}$ nonzero eigenvalues, of which $\binom{s+1}{2}$ are
equal to $n,$ and $\binom{s-1}{2}$ are equal to $-n$;

(ii) the rowsums of $B$ are equal to $0;$

(iii) the diagonal entries of $B$ are equal to $1.$
\end{corollary}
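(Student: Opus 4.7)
The plan is to deduce the corollary directly from Theorem \ref{thKHN} by a single global sign change. Apply Theorem \ref{thKHN} to the given $s \geq 2$ to obtain an integer $n \geq s$ and a symmetric $(-1,1)$-matrix $B'$ of order $ns$ whose nonzero eigenvalues consist of $\binom{s-1}{2}$ copies of $n$ and $\binom{s+1}{2}$ copies of $-n$, with zero rowsums and diagonal entries $-1$. Then set $B := -B'$.

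To finish, I would verify each of (i)--(iii) in turn. For (i), note that $B$ is symmetric and $(-1,1)$-valued since $B'$ is, and $\mathrm{spec}(B) = -\mathrm{spec}(B')$, so the roles of $n$ and $-n$ are simply interchanged while the total count $s^2$ of nonzero eigenvalues is preserved. For (ii), if $\mathbf{j}$ denotes the all-ones vector, then $B'\mathbf{j} = \mathbf{0}$ implies $B\mathbf{j} = -B'\mathbf{j} = \mathbf{0}$, so the rowsums of $B$ vanish. For (iii), the diagonal entries of $B'$ being $-1$ yields diagonal entries $+1$ for $B$.

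There is no genuine obstacle: once Theorem \ref{thKHN} is in hand, the corollary is a one-line consequence of the observation that negating a symmetric $(-1,1)$-matrix preserves symmetry, the $(-1,1)$ entry condition, and the kernel, while flipping the signs of all eigenvalues and of every diagonal entry.
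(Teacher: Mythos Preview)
Your proposal is correct and matches the paper's own argument exactly: the corollary is obtained from Theorem~\ref{thKHN} simply by negating the matrix constructed there, which swaps the multiplicities of $n$ and $-n$, flips the diagonal from $-1$ to $1$, and preserves the zero rowsums.
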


Theorem \ref{thKHN} will be used to give some answers to Problem \ref{mpro}.
For other purposes we shall need two other theorems.

\begin{theorem}
\label{thj}For any integer $s\geq2$, there are an integer $n\geq s$ and a
symmetric $\left(  -1,1\right)  $-matrix $B$ of order $ns$ such that:

(i) $B$ has exactly $s^{2}$ nonzero eigenvalues, of which $\binom{s+1}{2}$ are
equal to $n,$ and $\binom{s-1}{2}$ are equal to $-n$;

(ii) the vector $\mathbf{j}_{ns}$ is an eigenvector of $B$ to the eigenvalue
$-n$;

(iii) the diagonal entries of $B$ are equal to $1.$
\end{theorem}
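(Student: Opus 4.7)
The plan is to follow Kharaghani's block construction from \cite{Kha85} (the same engine behind Theorem \ref{thKHN}), but to choose the building blocks and sign conventions so that the diagonal flips from $-1$ to $+1$, the signature of the nonzero spectrum reverses, and the all-ones vector lands in the $-n$-eigenspace.

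The construction will have the form
\[
B = \sum_{k=1}^{s} P_k \otimes M_k,
\]
where $P_1,\ldots,P_s$ are the symmetric $(0,1)$-placement matrices of a symmetric Latin square $L$ of order $s$ (existence recalled in Section \ref{NBS}), so that $\sum_k P_k = J_s$ and each $P_k$ has row and column sums $1$; and $M_1,\ldots,M_s$ is a Kharaghani-amicable family of symmetric $(-1,1)$-matrices of a common order $n$. The amicability relations among the $M_k$ guarantee controlled cancellation of the off-diagonal terms in $B^2$, while the normalizations are chosen so that $\mathbf{j}_n$ is a common eigenvector of every $M_k$ with eigenvalue $-n/s$ and so that every diagonal block $M_{\ell_{ii}}$ of $B$ carries diagonal $+1$. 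Such a family is assembled from a sufficiently rich Hadamard matrix (Paley- or twin-prime-based), which is where the quantifier ``there exists $n\geq s$'' is spent: $n$ must be admissible for the amicable structure, not arbitrary.

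With the construction in place, property (iii) is immediate from the diagonal choice of the $M_k$, and property (ii) from the direct calculation
\[
B\mathbf{j}_{ns} \;=\; \sum_k (P_k\mathbf{j}_s)\otimes(M_k\mathbf{j}_n) \;=\; \sum_k \mathbf{j}_s \otimes \bigl(-(n/s)\mathbf{j}_n\bigr) \;=\; -n\,\mathbf{j}_{ns}.
\]
Property (i), the spectral part, will follow from a spectral analysis of $B$ via its block structure and the amicability relations, combined with the inertia of a small symmetric matrix encoded by $L$ and by the $\mathbf{j}_n$-eigenvalues of the $M_k$; the task is to show that this small symmetric matrix has the prescribed signature, so that the nonzero spectrum of $B$ splits into $\binom{s+1}{2}$ copies of $n$ and $\binom{s-1}{2}$ copies of $-n$, with the remaining eigenvalues forced to $0$ by the amicability cancellations.

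The main obstacle is fabricating the amicable family $M_1,\ldots,M_s$ with all the required simultaneous normalizations: fixed diagonal entries \emph{and} a fixed $\mathbf{j}_n$-eigenvalue. Kharaghani's original recipe provides amicability through Paley-type pieces, but enforcing both extra normalizations tightens the feasible $n$ to a specific admissible sequence, which is exactly why the statement asserts only the existence of some $n \geq s$ rather than giving an explicit formula. Once this family is in hand, the tensor assembly, the $B^2$ bookkeeping, and the signature count are straightforward.
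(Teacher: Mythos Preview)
Your block-Latin-square skeleton $B=\sum_k P_k\otimes M_k$ is exactly the paper's framework, but the normalizations you impose on the blocks are mutually inconsistent, and this is where the proposal breaks. You require every $M_k$ to satisfy $M_k\mathbf{j}_n=-(n/s)\mathbf{j}_n$; but then for $k\neq l$,
\[
M_kM_l\,\mathbf{j}_n \;=\; M_k\bigl(-(n/s)\mathbf{j}_n\bigr)\;=\;(n/s)^2\,\mathbf{j}_n \;\neq\;0,
\]
so $M_kM_l\neq 0$, and likewise $(M_kM_l+M_lM_k)\mathbf{j}_n=2(n/s)^2\mathbf{j}_n\neq 0$. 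Hence neither the orthogonality relation $M_kM_l=0$ nor an anticommutation relation can hold, and you lose exactly the ``controlled cancellation of the off-diagonal terms in $B^2$'' that the argument needs. (Recall that for a symmetric Latin square the $(i,j)$ block of $B^2$, $i\neq j$, is a sum of products $M_pM_q$ with $p\neq q$; without $M_pM_q=0$ you have no mechanism to kill these blocks.) In short, you cannot have $\mathbf{j}_n$ as a common nonzero-eigenvalue eigenvector of all the blocks and still get a Kharaghani-type family.

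The paper's construction avoids this trap by making the blocks rank~$1$: take pairwise orthogonal $(-1,1)$-vectors $\mathbf{x}_1,\dots,\mathbf{x}_s$ of dimension $n$ with $\mathbf{x}_1=\mathbf{j}_n$ (e.g.\ rows of a normalized Hadamard matrix), set $A_i=\mathbf{x}_i\mathbf{x}_i^{T}$ for $i\ge 2$ but $A_1=-\mathbf{j}_n\mathbf{j}_n^{T}=-J_n$, and use a symmetric Latin square with constant diagonal symbol $s$. Then $A_pA_q=0$ for $p\neq q$ is automatic from orthogonality, so $B^2$ is block-diagonal with each diagonal block $n(J_n+A_2+\cdots+A_s)$ of rank $s$ and unique nonzero eigenvalue $n^2$; this forces exactly $s^2$ nonzero eigenvalues of $B$, each $\pm n$, and the split is read off from $\mathrm{tr}\,B=sn$. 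Crucially, only one block---$A_1$---acts nontrivially on $\mathbf{j}_n$ (the others annihilate it), giving $B\mathbf{j}_{ns}=-n\mathbf{j}_{ns}$; and the constant-diagonal Latin square places $A_s$ (with diagonal $+1$) on every diagonal block. The single sign flip on $A_1$ is the whole trick you are missing: it simultaneously delivers (ii), keeps the orthogonality intact, and reverses the signature relative to Theorem~\ref{thKHN}.
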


\begin{theorem}
\label{thj1}For any integer $s\geq2$, there are an integer $n\geq s$ and a
symmetric $\left(  -1,1\right)  $-matrix $B$ of order $ns$ such that:

(i) $B$ has exactly $s^{2}$ nonzero eigenvalues, of which $\binom{s+1}{2}-1$
are equal to $n,$ and $\binom{s-1}{2}+1$ are equal to $-n$;

(ii) all rowsums of $B$ are equal to $-n$.
\end{theorem}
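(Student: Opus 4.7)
The approach is to modify the Kharaghani-type construction underlying Theorem \ref{thj} so as to trade one $+n$ eigenvalue for a $-n$ eigenvalue, while preserving the constant rowsum $-n$ and the $(\pm 1)$-structure.

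First I would recall the matrix $B_0$ supplied by Theorem \ref{thj}: a symmetric $(\pm 1)$-matrix of order $ns$, assembled from a family of $n \times n$ symmetric $(\pm 1)$-matrices in a Kharaghani-style pattern of blocks, with $B_0 \mathbf{j}_{ns} = -n \mathbf{j}_{ns}$, $\binom{s+1}{2}$ eigenvalues equal to $+n$, and $\binom{s-1}{2}$ equal to $-n$. The spectral structure comes from identifying an ``outer'' $s$-dimensional factor (tracking the block index) and an ``inner'' $n$-dimensional factor (inside each block); the vector $\mathbf{j}_{ns}$ is the all-ones vector on both factors and lies in the $-n$-eigenspace.

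Next, I would pick an eigenvector $v$ of $B_0$ at eigenvalue $+n$ of the special tensor form $v = u \otimes w$, where $u \in \mathbb{R}^s$ is orthogonal to $\mathbf{j}_s$ and $w \in \mathbb{R}^n$ is one of the $\pm 1$-vectors arising inside the Kharaghani blocks. The plan is to alter the recipe so that $v$ --- and only $v$ among the $\pm n$-eigenvectors --- is moved from the $+n$-eigenspace into the $-n$-eigenspace. Concretely, I would replace one of the $s \times s$ sign patterns that combine the blocks by its rank-one update along $u$; this preserves the block symmetry of the recipe, keeps $\mathbf{j}_{ns}$ as a $-n$-eigenvector (since $u \perp \mathbf{j}_s$), and flips the sign of the targeted eigenvalue. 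Clauses (i) and (ii) then follow from the block-level spectral decomposition.

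The main obstacle is the $(\pm 1)$-integrality of the update: a generic rank-one modification destroys the $(\pm 1)$-structure of the blocks, so one must either choose $u$ with very special support (for instance, concentrated on two coordinates, so that the update acts as a sign flip of a single pair of blocks) or, more robustly, enlarge $n$ and carry out the swap inside the block-matrix layer, where the greater size leaves room for sign adjustments that do not escape $\{-1,+1\}$. Verifying that such an arrangement can be realized for every $s \geq 2$ --- with the same freedom in the choice of $n$ as in Theorem \ref{thj} --- is the technical heart of the argument; once this compatibility is established, the claimed spectrum and rowsum condition follow from the same tensor-factor calculation used in Theorem \ref{thj}, now with the signature adjusted by the single swap.
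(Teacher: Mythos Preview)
Your proposal does not actually prove the theorem: you identify the key obstacle---keeping the entries in $\{-1,+1\}$ after a rank-one perturbation---and then defer it, calling it ``the technical heart of the argument'' without carrying it out. The sketch of how one might proceed (flip the sign of a pair of blocks, or enlarge $n$ to make room) is too vague to evaluate; in particular, it is unclear what ``one of the $s\times s$ sign patterns that combine the blocks'' refers to in the Kharaghani construction as used here, where the block layout is dictated by a single Latin square rather than by a family of sign matrices. So the proposal is a plan with its central step missing.

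The paper avoids the perturbation idea entirely. Instead of starting from the matrix of Theorem~\ref{thj} and modifying it, it reruns the construction with a different Latin square: a back-circulant square in place of the constant-diagonal one, while keeping the same blocks $A_{1}=-J_{n}$ and $A_{i}=\mathbf{x}_{i}\otimes\mathbf{x}_{i}$ for $i\geq2$. The computation of $B^{2}$ is unchanged, so $B$ still has exactly $s^{2}$ nonzero eigenvalues, each equal to $\pm n$. What changes is the trace: the back-circulant diagonal is not constant, so the block $A_{1}=-J_{n}$ (whose diagonal entries are $-1$) now appears among the diagonal blocks of $B$, giving $\mathrm{tr}\,B=(s-2)n$ rather than $sn$. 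Hence $n_{+}-n_{-}=s-2$, which is exactly the shift of one eigenvalue from $+n$ to $-n$ that you were trying to engineer by hand. The rowsum condition follows immediately from $A_{1}\mathbf{j}_{n}=-n\mathbf{j}_{n}$ and $A_{i}\mathbf{j}_{n}=0$ for $i\geq2$. In short, the signature change is achieved combinatorially, through the choice of Latin square, rather than by an algebraic perturbation whose $(\pm1)$-integrality would have to be separately verified.
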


Note that Theorem \ref{thKHN} and Property (2) imply that for any natural
number $s$, the classes $\mathbb{S}_{s^{2}}$ and $\mathbb{S}_{2s^{2}}$ are
nonempty. Before continuing, let us state some explicit solutions to Problem
\ref{probS}:

\begin{proposition}
\label{propE}Let $s$ be a natural number and $p$ be a prime power, with $p=1$
$\operatorname{mod}$ $4.$ Then the classes $\mathbb{S}_{s^{2}},$
$\mathbb{S}_{2s^{2}},$ $\mathbb{S}_{2s^{2}\left(  p+1\right)  }$ and
$\mathbb{S}_{4s^{2}\left(  p+1\right)  }$ are nonempty.
\end{proposition}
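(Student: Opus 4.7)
The plan is to combine the tensor-product closure in Property (4) with a short list of already-known members of the classes $\mathbb{S}_k$, and simply read off the four desired nonemptiness statements. By Property (4), whenever $\mathbb{S}_k \neq \varnothing$ and $\mathbb{S}_l \neq \varnothing$, one has $\mathbb{S}_{kl} \neq \varnothing$, so it suffices to exhibit suitable factors in each case.

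First I would assemble the building blocks. Property (2) gives $J_1 \in \mathbb{S}_1$, and the $2 \times 2$ symmetric Hadamard matrix $H_2$ witnesses $\mathbb{S}_2 \neq \varnothing$ via Property (3). Theorem \ref{thKHN} supplies an element of $\mathbb{S}_{s^2}$ for every integer $s \geq 2$, and together with $J_1 \in \mathbb{S}_1$ this already covers $\mathbb{S}_{s^2}$ for all natural numbers $s$, handling the first class. For the last two classes I would invoke Paley's construction, recorded in the proposition immediately preceding the one to be proved, which yields a symmetric Hadamard matrix of order $2(p+1)$ whenever $p$ is a prime power with $p \equiv 1 \pmod 4$, and hence places a matrix in $\mathbb{S}_{2(p+1)}$.

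With these ingredients in hand, each remaining case reduces to one application of Property (4). Tensoring an element of $\mathbb{S}_2$ with one of $\mathbb{S}_{s^2}$ produces a member of $\mathbb{S}_{2s^2}$. Tensoring an element of $\mathbb{S}_{2(p+1)}$ with one of $\mathbb{S}_{s^2}$ produces a member of $\mathbb{S}_{2s^2(p+1)}$. Finally, tensoring an element of $\mathbb{S}_{2(p+1)}$ with one of $\mathbb{S}_{2s^2}$, or equivalently tensoring $H_2$ with the matrix just obtained in $\mathbb{S}_{2s^2(p+1)}$, yields a member of $\mathbb{S}_{4s^2(p+1)}$.

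I do not expect a genuine obstacle here: once Theorem \ref{thKHN} and Paley's construction are in place, the argument is pure bookkeeping via tensor products. The only point worth a sanity check is the edge case $s=1$, but it is absorbed without issue since $J_1 \in \mathbb{S}_1$ and the same tensor recipes then produce $H_2 \in \mathbb{S}_2$, the Paley matrix in $\mathbb{S}_{2(p+1)}$, and $H_2$ tensored with the Paley matrix in $\mathbb{S}_{4(p+1)}$.
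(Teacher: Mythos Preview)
Your argument is correct and is precisely the reasoning the paper has in mind: the proposition is stated without a formal proof because it follows immediately from Theorem~\ref{thKHN} (giving $\mathbb{S}_{s^2}\neq\varnothing$), Paley's construction (giving $\mathbb{S}_{2(p+1)}\neq\varnothing$), and the tensor-product closure of Property~(4). Your handling of the boundary case $s=1$ via $J_1\in\mathbb{S}_1$ is also exactly right.
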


\subsection{Finding $\lambda_{k}^{\ast}\left(  n\right)  $ and $c_{k}%
^{\ast\text{ }}$for infinitely many $k$ and $n$}

Observe that using the classes $\mathbb{S}_{k},$ Theorem \ref{thMx} can be
restated as:\emph{ if }$\mathbb{S}_{k}$\emph{ is nonempty, then }%
$c_{k+1}^{\ast}=1/(2\sqrt{k}).$ Proposition \ref{propE} and properties (1)-(6)
give many values of $k$ for which this equality holds. Moreover, using the
finer properties outlined in Theorem \ref{thKHN}, we can determine the exact
value of $\lambda_{s^{2}+1}^{\ast}\left(  n\right)  $ for infinitely many $n.$

\begin{theorem}
\label{thp}If $s\geq1,$ then there is an integer $n>s,$ such that for every
integer $t\geq1,$ there is a graph $G$ of order $snt$ with\emph{ }%
\[
\lambda_{s^{2}+1}^{\ast}\left(  G\right)  =\frac{nt}{2}.
\]

\end{theorem}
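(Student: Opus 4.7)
The plan is to adapt the half-sum blow-up construction from the proof of Theorem \ref{thMx}, starting from the matrix $B$ produced by Theorem \ref{thKHN}. Its two extra properties --- zero rowsums and diagonal entries equal to $-1$ --- are precisely what will allow the construction to produce a graph of exact order $snt$ in which the singular value $nt/2$ appears exactly $s^2$ times in second place, rather than only asymptotically. I would handle the edge case $s=1$ directly by taking $n=2$ and $G=K_{t,t}$ of order $2t=nt$; its spectrum is $\{t,-t,0,\ldots,0\}$, so $\lambda_{s^2+1}^{\ast}(G)=\lambda_2^{\ast}(G)=t=nt/2$, as required.

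For $s\ge 2$, I would invoke Theorem \ref{thKHN} to obtain $n\ge s$ and a symmetric $(-1,1)$-matrix $B$ of order $ns$ with $s^2$ nonzero eigenvalues (each of absolute value $n$), zero rowsums, and diagonal $-1$. Should it happen that $n=s$, replacing $B$ by $B\otimes J_2$ preserves all three properties (because $J_2\mathbf{j}_2=2\mathbf{j}_2$ and the diagonal of $J_2$ is $1$) while doubling the order, so one may assume $n>s$. Then I would define
\[
C=\tfrac{1}{2}\bigl(B\otimes J_t+J_{nst}\bigr).
\]
Entries of $B\otimes J_t$ lie in $\{-1,1\}$ and those of $J_{nst}$ equal $1$, so $C$ is $(0,1)$-valued; and $(B\otimes J_t)_{ii}=B_{ii}=-1$ forces $C_{ii}=0$. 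Hence $C=A(G)$ for some graph $G$ on $nst$ vertices.

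To finish I would compute the spectrum of $C$. The zero-rowsum property of $B$ gives $B\mathbf{j}_{ns}=0$, so $\mathbf{j}_{nst}=\mathbf{j}_{ns}\otimes\mathbf{j}_t$ lies in the kernel of $B\otimes J_t$ while being an eigenvector of $J_{nst}$ with eigenvalue $nst$; hence $C\mathbf{j}_{nst}=(nst/2)\mathbf{j}_{nst}$. On $\mathbf{j}_{nst}^{\perp}$ the matrix $J_{nst}$ vanishes, so $C$ acts there as $\tfrac{1}{2}(B\otimes J_t)$. By the tensor-product eigenvalue identity, the nonzero eigenvalues of $B\otimes J_t$ arise only from vectors $\mathbf{v}\otimes\mathbf{j}_t$ with $B\mathbf{v}=\mu\mathbf{v}$ and $\mu\neq 0$; such $\mathbf{v}$ is automatically perpendicular to $\mathbf{j}_{ns}$, so each of these $s^2$ eigenvectors lies in $\mathbf{j}_{nst}^{\perp}$ and carries an eigenvalue of absolute value $nt$. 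Thus the eigenvalues of $C$ are $nst/2$ once, $s^2$ further values of absolute value $nt/2$, and zeros. Since $s\ge 2$ forces $nst/2>nt/2>0$, sorting the singular values gives $\lambda_1^{\ast}(G)=nst/2$ and $\lambda_2^{\ast}(G)=\cdots=\lambda_{s^2+1}^{\ast}(G)=nt/2$, as wanted.

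The one delicate point I expect is confirming that each of the $s^2$ nonzero eigenvectors of $B\otimes J_t$ genuinely sits in $\mathbf{j}_{nst}^{\perp}$, so that the rank-one perturbation $J_{nst}$ affects only the top singular value and leaves the block of $s^2$ singular values at height $nt/2$ intact. This is exactly the role played by the zero-rowsum property furnished by Theorem \ref{thKHN}.
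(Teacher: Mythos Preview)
Your proof is correct and follows essentially the same construction as the paper: the paper first forms $C=\tfrac12(B+J_{sn})$ and then takes $A=C\otimes J_t$, while you form $\tfrac12(B\otimes J_t+J_{snt})$ directly, but since $(B+J_{sn})\otimes J_t=B\otimes J_t+J_{snt}$ these are the identical matrix. Your treatment is in fact tidier than the paper's in two respects: you handle the case $s=1$ explicitly (Theorem~\ref{thKHN} requires $s\ge 2$), and you explain how to pass from $n\ge s$ to the $n>s$ demanded by the statement via $B\mapsto B\otimes J_2$.
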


\begin{proof}
Let $n\geq s,$ and let $B$ be the matrix of order $sn$ constructed in Theorem
\ref{thKHN}. Set
\[
C=\frac{1}{2}\left(  B+J_{sn}\right)  ,
\]
and note that $C$ is a symmetric $\left(  0,1\right)  $-matrix with zero
diagonal. Since all rowsums of $B$ are zero, the vector $\mathbf{j}_{sn}$ is
an eigenvector of $B$ to the eigenvalue $0;$ thus all eigenvectors to nonzero
eigenvalues of $B$ are orthogonal to $\mathbf{j}_{sn}.$ Obviously every
eigenvector of $B$ is an eigenvector to $C.$ Thus, $sn/2$ is an eigenvalue of
$C;$ also, $C$ has $\binom{s-1}{2}$ eigenvalues equal to $n/2$ and
$\binom{s+1}{2}$ eigenvalues equal to $-n/2;$ the remaining eigenvalues of $C$
are zero. Now, letting $A=C\otimes J_{t},$ one sees that $A$ is the adjacency
matrix of a graph $G$ of order $snt$ with
\[
\lambda_{s^{2}+1}^{\ast}\left(  G\right)  =\frac{1}{2}\lambda_{s^{2}}^{\ast
}\left(  B\right)  t=\frac{snt}{2s},
\]
completing the proof of Theorem \ref{thp}.
\end{proof}

Theorem \ref{thKHN} helps also to find concise asymptotics of $c_{k}^{\ast}.$
Indeed, since $c_{k}^{\ast}$ is nonincreasing in $k,$ letting $s$ to be the
smallest integer such that $s^{2}+1\geq k,$ we see that $c_{k}^{\ast}%
\geq1/\left(  2s\right)  ;$ since $\left(  s-1\right)  ^{2}+1<k,$ we get the
following theorem:

\begin{theorem}
\label{thga}For any $k\geq3,$%
\[
\frac{1}{2\sqrt{k-1}}\geq c_{k}^{\ast}>\frac{1}{2\sqrt{k-1}+2}=\frac{1}%
{2\sqrt{k-1}}+O\left(  k^{-1}\right)  .
\]

\end{theorem}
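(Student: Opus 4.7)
\textbf{Proof plan for Theorem \ref{thga}.}

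The upper bound $c_k^\ast \leq 1/(2\sqrt{k-1})$ is already recorded in (\ref{ubc}), so the only work lies in the lower bound. My plan is to combine three ingredients: the characterization of $c_k^\ast$ via $\mathbb{S}_k$, the fact that $\mathbb{S}_{s^2}$ is nonempty for every $s$, and the trivial monotonicity of $c_k^\ast$ in $k$.

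First, I would note that $c_k^\ast$ is nonincreasing in $k$: for every graph $G$ of order $n \geq k$, $\lambda_{k+1}^\ast(G) \leq \lambda_k^\ast(G)$, so taking suprema over $G$ yields $c_{k+1}^\ast \leq c_k^\ast$. Next, by Theorem \ref{thKHN}, for every integer $s \geq 2$ there exists a symmetric $(-1,1)$-matrix $B$ of order $ns$ with exactly $s^2$ nonzero eigenvalues, all of absolute value $n$; in particular $\lambda_{s^2}^\ast(B) = ns/\sqrt{s^2} \cdot 1 = n$, so $B \in \mathbb{S}_{s^2}$. (The case $s=1$ is covered by Property (2), since $J_n \in \mathbb{S}_1$.) Consequently, Theorem \ref{thMx} applies with $k$ replaced by $s^2$, giving
\[
c_{s^2+1}^\ast \;=\; \frac{1}{2s}
\]
for every $s \geq 1$.

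Given $k \geq 3$, I would then choose $s$ to be the smallest positive integer with $s^2 + 1 \geq k$. Monotonicity and the equality above yield
\[
c_k^\ast \;\geq\; c_{s^2+1}^\ast \;=\; \frac{1}{2s}.
\]
By minimality of $s$, we have $(s-1)^2 + 1 < k$, whence $s - 1 < \sqrt{k-1}$ and therefore $2s < 2\sqrt{k-1} + 2$. Substituting gives $c_k^\ast > 1/(2\sqrt{k-1} + 2)$, as required. The asymptotic identity $1/(2\sqrt{k-1}+2) = 1/(2\sqrt{k-1}) + O(k^{-1})$ is a routine algebraic computation: the difference is $1/\bigl(\sqrt{k-1}\,(2\sqrt{k-1}+2)\bigr) = O(1/k)$.

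There is no real obstacle here; every piece has been assembled earlier in the paper. The only subtlety worth flagging is that the smallest admissible $s$ may need to be treated separately in low cases (e.g.\ when $k=3$, one takes $s=2$ using $\mathbb{S}_4 \neq \varnothing$), but this is absorbed into the inequality $(s-1)^2 + 1 < k$ as soon as $k \geq 3$, so no special handling is required.
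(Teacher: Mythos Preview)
Your argument is correct and follows exactly the paper's own route: the upper bound is (\ref{ubc}), and for the lower bound one uses Theorem \ref{thKHN} (so $\mathbb{S}_{s^2}\neq\varnothing$) together with Theorem \ref{thMx} to get $c_{s^2+1}^\ast=1/(2s)$, then picks $s$ minimal with $s^2+1\ge k$ and invokes monotonicity and $(s-1)^2+1<k$. Nothing further is needed.
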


It seems quite clear that the lower bound on $c_{k}^{\ast}$ can be improved,
so we raise the following problem.

\begin{problem}
Is there a positive constant $C$ such that for any $k\geq3,$
\[
c_{k}^{\ast}>\frac{1}{2\sqrt{k+C}}?
\]

\end{problem}

\section{\label{NGs}Spectral Nordhaus-Gaddum problems}

It turns out that the classes $\mathbb{S}_{k}$ help to find infinitely many
solutions to a general spectral Nordhaus-Gaddum problem. Nordhaus-Gaddum
problems in general, form a notable part of extremal graph theory, see, e.g.,
the recent survey \cite{AoHa13} for their numerous variations. In particular,
spectral Nordhaus-Gaddum problems have been studied first in 1970, by Nosal
\cite{Nos70}, and have attracted a lot of attention since then.

Thus, let $\overline{G}$ denote the complement of a graph $G.$ Given $n\geq
k\geq1,$ define the functions $f_{k}\left(  n\right)  ,$ $f_{-k}\left(
n\right)  ,$ and $f_{k}^{\ast}\left(  n\right)  $ as
\begin{align*}
f_{k}\left(  n\right)   &  =\max_{v\left(  G\right)  =n}\lambda_{k}\left(
G\right)  +\lambda_{k}(\overline{G}),\\
f_{-k}\left(  n\right)   &  =\max_{v\left(  G\right)  =n}\left\vert
\lambda_{n-k+1}\left(  G\right)  \right\vert +|\lambda_{n-k+1}(\overline
{G})|,\\
f_{k}^{\ast}\left(  n\right)   &  =\max_{v\left(  G\right)  =n}\lambda
_{k}^{\ast}\left(  G\right)  +\lambda_{k}^{\ast}(\overline{G}).
\end{align*}
Clearly, we define $f_{k}\left(  n\right)  ,$ $f_{-k}\left(  n\right)  ,$ and
$f_{k}^{\ast}\left(  n\right)  $ similarly to $\lambda_{k}\left(  n\right)  ,$
$\lambda_{-k}\left(  n\right)  ,$ and $\lambda_{k}^{\ast}\left(  n\right)  .$
Note that $f_{k}^{\ast}\left(  n\right)  $ is a new function, but
$f_{k}\left(  n\right)  $ and $f_{-k}\left(  n\right)  $ have been introduced
in \cite{Nik07} with different, albeit essentially equivalent definitions.

Now, let us reiterate and extend a problem raised in \cite{Nik07}:

\begin{problem}
\label{NGpro}For any $k\geq1,$ find $f_{k}\left(  n\right)  ,$\ $f_{-k}\left(
n\right)  ,$ and $f_{k}^{\ast}\left(  n\right)  .$
\end{problem}

The function $f_{1}\left(  n\right)  $ has been studied by Nosal \cite{Nos70}:
finding it have turned out to be a hard problem, which has been resolved only
recently, in \cite{Csi09} and \cite{Ter11}. This case sticks out from the
rest, both with its particular extremal graphs, as with its particular
methods. The first "mainstream" case is $f_{2}\left(  n\right)  ,$ which has
been determined in \cite{Nik07}.

Recently, in \cite{NiYu14}, two tight upper bounds have been given:

\emph{If }$k\geq2$\emph{ and }$n\geq15\left(  k-1\right)  ,$\emph{ then }%
\begin{equation}
f_{k}\left(  n\right)  \,\leq\frac{n}{\sqrt{2\left(  k-1\right)  }%
}-1.\label{bo2}%
\end{equation}
\emph{Likewise, if }$k\geq1$ \emph{and} $n\geq4^{k},$\emph{ then }%
\begin{equation}
f_{-k}\left(  n\right)  \,\leq\frac{n}{\sqrt{2k}}+1.\label{bo3}%
\end{equation}
In turns out that the bounds (\ref{bo2}) and (\ref{bo3}) capture the rate of
growth of $f_{k}\left(  n\right)  $\ and $f_{-k}\left(  n\right)  $ pretty
tightly. In \cite{NiYu14}, it has been shown that (\ref{bo2}) and (\ref{bo3})
are essentially best possible if $k=2^{s-1}+1$ and $s=2,3,\ldots.$ What is
more, for such $k$ it has proved that
\[
f_{k}\left(  n\right)  \geq\frac{n}{\sqrt{2\left(  k-1\right)  }}-2\text{
\ \ and \ \ \ }f_{-k}\left(  n\right)  \geq\frac{n}{\sqrt{2k}}.
\]
for infinitely many $n.\medskip$

The study of $f_{k}\left(  n\right)  ,$ $f_{-k}\left(  n\right)  ,$ and
$f_{k}^{\ast}\left(  n\right)  $ can be put on the same ground as $\lambda
_{k}\left(  n\right)  ,$ $\lambda_{-k}\left(  n\right)  ,$ and $\lambda
_{k}^{\ast}\left(  n\right)  .$ First, using the methods of \cite{Nik06}, one
can show that the limits
\[
f_{k}=\lim_{n\rightarrow\infty}\frac{f_{k}\left(  n\right)  }{n}%
,\text{\ \ \ \ }f_{-k}=\lim_{n\rightarrow\infty}\frac{f_{-k}\left(  n\right)
}{n},\text{ \ \ and \ \ \ }f_{k}^{\ast}=\lim_{n\rightarrow\infty}\frac
{f_{k}^{\ast}\left(  n\right)  }{n}%
\]
exist, and the following inequalities hold for every $n:$%
\[
f_{k}\left(  n\right)  \leq f_{k}n-1,\text{ \ \ }f_{-k}\left(  n\right)  \leq
f_{-k}n+1,\text{\ \ \ \ and \ \ \ \ }f_{k}^{\ast}\left(  n\right)  \leq
f_{k}^{\ast}n+1.
\]
Now, the constructions developed in Section \ref{cons} allow to resolve
Problem \ref{NGpro} for infinitely many values of $k.$ Indeed, it turns out
that if \ $\mathbb{S}_{k}$ is nonempty, then
\[
f_{k+1}=\frac{1}{\sqrt{2k}}\text{ \ \ and \ \ \ }f_{-k}=\frac{1}{\sqrt{2k}}.
\]
These equalities follow from the theorem below, where we prove a more precise
result, involving $n$ as well.

\begin{theorem}
\label{thNG}If $\mathbb{S}_{k}$ is nonempty, then there is an integer $n>k,$
such that for every integer $t\geq1,$ there is a graph $G$ of order $nt$ with
\begin{equation}
f_{k+1}\left(  n\right)  \geq\frac{nt}{\sqrt{2k}}-2 \label{in1}%
\end{equation}
and%
\begin{equation}
f_{-k}\left(  n\right)  \geq\frac{nt}{\sqrt{2k}}. \label{in2}%
\end{equation}

\end{theorem}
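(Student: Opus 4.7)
The plan is to preprocess an arbitrary $A\in\mathbb{S}_k$ into a symmetric $(-1,1)$-matrix $B$ with balanced positive/negative spectrum and zero rowsums, pass to the $(0,1)$-matrix $C:=(B+J_n)/2$, and then build $G$ by a \emph{weighted} blow-up of $C$ that both turns $G$ into a simple graph and costs only $O(1)$ (rather than $O(t)$) in the eigenvalues of interest. Concretely, by Proposition \ref{props} applied to $A$ one gets $B\in\mathbb{S}_{2k}$ of some order $n>k$ with exactly $k$ eigenvalues $n/\sqrt{2k}$, exactly $k$ eigenvalues $-n/\sqrt{2k}$, and $B\mathbf{j}_n=0$. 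Since $\mathbf{j}_n$ is a common eigenvector of $B$ and $J_n$, the spectrum of $C$ is then immediate: $\lambda_1(C)=n/2$, $\lambda_2(C)=\cdots=\lambda_{k+1}(C)=n/(2\sqrt{2k})$, $\lambda_{k+2}(C)=\cdots=\lambda_{n-k}(C)=0$, and $\lambda_{n-k+1}(C)=\cdots=\lambda_n(C)=-n/(2\sqrt{2k})$. Writing $D_C:=\mathrm{diag}(C)$, for each $t\ge 1$ I would define $G$ of order $nt$ by
\[
A_G\;:=\;C\otimes J_t-D_C\otimes I_t,
\]
which is easily checked to be a symmetric $(0,1)$-matrix with zero diagonal: within the $i$-th block of $t$ vertices one places a $K_t$ when $C_{ii}=1$ and an $\overline{K_t}$ when $C_{ii}=0$, while between distinct blocks one copies the $(i,j)$-entry of $C$.

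Splitting $\mathbb{R}^{nt}=\mathbb{R}^n\otimes\mathbb{R}^t$ along $\mathbb{R}^t=\langle\mathbf{j}_t\rangle\oplus\mathbf{j}_t^{\perp}$, the spectrum of $A_G$ decomposes as the $n$ eigenvalues of $M_t:=tC-D_C$ (from vectors $v\otimes\mathbf{j}_t$), together with $n(t-1)$ eigenvalues equal to $-C_{ii}\in\{-1,0\}$ (from $e_i\otimes w$, $w\perp\mathbf{j}_t$). Since $-D_C$ is negative semidefinite of spectral norm at most $1$, Weyl's inequalities give
\[
t\lambda_i(C)-1\;\le\;\lambda_i(M_t)\;\le\;t\lambda_i(C)\qquad(1\le i\le n),
\]
and a short check shows that for $t\ge 1$ the top $k+1$ and the bottom $k$ eigenvalues of $A_G$ all come from $M_t$. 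The lower Weyl bound yields $\lambda_{k+1}(G)\ge nt/(2\sqrt{2k})-1$, while the upper Weyl bound, applied at the negative end, gives $\lambda_{n-k+1}(M_t)\le t\lambda_{n-k+1}(C)=-nt/(2\sqrt{2k})$, so $|\lambda_{nt-k+1}(G)|\ge nt/(2\sqrt{2k})$ with no additive loss.

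A direct algebraic identity gives $A_{\overline{G}}=\overline{C}\otimes J_t-D_{\overline{C}}\otimes I_t$, where $\overline{C}:=J_n-C=(-B+J_n)/2$. The matrix $-B$ still has zero rowsums, and its eigenvalues form the same multiset as those of $B$ (the spectrum of $B$ being symmetric about $0$); consequently $\overline{C}$ has exactly the same eigenvalues as $C$, and the analysis of the previous paragraph applied to $\overline{C}$ delivers $\lambda_{k+1}(\overline{G})\ge nt/(2\sqrt{2k})-1$ and $|\lambda_{nt-k+1}(\overline{G})|\ge nt/(2\sqrt{2k})$. Summing the estimates for $G$ and $\overline{G}$ yields (\ref{in1}) and (\ref{in2}).

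The main obstacle is precisely what the weighted blow-up is designed to bypass. The naive alternative of zeroing $D_C$ first and then taking the ordinary blow-up $(C-D_C)\otimes J_t$ would convert the $O(1)$ Weyl perturbation $\|D_C\|\le 1$ into an $O(t)$ perturbation $\|D_C\otimes J_t\|\le t$ after scaling, producing only $\lambda_{k+1}(G)\ge nt/(2\sqrt{2k})-t$ and wrecking the target bound. Attaching the diagonal correction to $I_t$ instead of $J_t$ localises it; the bulk of the remaining work is the bookkeeping that verifies the Kronecker spectral decomposition and pins $\lambda_{k+1}(M_t)$ and $\lambda_{n-k+1}(M_t)$ to the correct positions in the sorted spectrum of $A_G$.
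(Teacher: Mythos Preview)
Your proof is correct and, despite the different packaging, constructs exactly the same graph as the paper. The paper forms $A'=\tfrac12\bigl((B\otimes J_t)+J_{nt}\bigr)=C\otimes J_t$ and then zeroes its diagonal; since $\mathrm{diag}(C\otimes J_t)=D_C\otimes I_t$, the resulting adjacency matrix is precisely your $A_G=C\otimes J_t-D_C\otimes I_t$. The only genuine difference is in the spectral bookkeeping: the paper applies Proposition~\ref{proW0} directly to the $nt\times nt$ pair $(A',D_C\otimes I_t)$, which yields $\lambda_{k+1}(G)\ge\lambda_{k+1}(A')-1$ without any interlacing check, whereas you first split off the $\mathbf j_t^\perp$-block and apply Weyl to the $n\times n$ matrix $M_t=tC-D_C$, at the cost of verifying that the extreme eigenvalues of $A_G$ really come from $M_t$. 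Your route makes the ``no loss'' in \eqref{in2} transparent (via $\lambda_{n-k+1}(M_t)\le t\lambda_{n-k+1}(C)$), and your explicit identity $A_{\overline G}=\overline C\otimes J_t-D_{\overline C}\otimes I_t$ with $\overline C=(-B+J_n)/2$ is a clean way to handle the complement; the paper simply negates $B$ and repeats the argument. One small point: your ``short check'' that the top $k+1$ and bottom $k$ eigenvalues of $A_G$ come from $M_t$ needs $nt>2\sqrt{2k}$, which does hold since the $B$ produced by Proposition~\ref{props} actually has order $n\ge 4k$, not merely $n>k$.
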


\begin{proof}
If $\mathbb{S}_{k}$ is nonempty, Proposition \ref{props} implies that there
exists $B\in\mathbb{S}_{2k},$ say of order $n,$ such that,
\begin{align*}
\lambda_{1}\left(  B\right)   &  =\cdots=\lambda_{k}\left(  B\right)
=\frac{n}{\sqrt{2k}},\\
\lambda_{n-k+1}\left(  B\right)   &  =\cdots=\lambda_{n}\left(  B\right)
=-\frac{n}{\sqrt{2k}},\\
\lambda_{i}^{\ast}\left(  B\right)   &  =0\text{ for }k<i\leq n-k\text{.}%
\end{align*}
Define a matrix $A^{\prime}$ by%
\[
A^{\prime}=\frac{1}{2}\left(  B\otimes J_{t}\right)  +J_{nt}.
\]
Now, zero the diagonal of $A^{\prime}$ and write $A$ for the resulting matrix.
Note that $A$ is a symmetric $\left(  0,1\right)  $ matrix with zero diagonal,
so it is the adjacency matrix of graph $G$ of order $nt.$ As in the proof of
Theorem \ref{thMx}, we see that
\[
\lambda_{k+1}\left(  G\right)  \geq\frac{nt}{2\sqrt{2k}}-1.
\]
On the other hand, taking the matrix
\[
\overline{A}^{\prime}=\frac{1}{2}\left(  -B\otimes J_{t}\right)  +J_{nt},
\]
and zeroing its main diagonal, we obtain the matrix $\overline{A},$ which is
obviously the adjacency matrix of the complement of $G.$ Like above we have,
\[
\lambda_{k+1}(\overline{G})\geq\frac{nt}{2\sqrt{2k}}-1,
\]
and inequality (\ref{in1}) follows. The proof of (\ref{in2}) is similar and is omitted.
\end{proof}

In view of Theorem \ref{thNG} and properties (1)-(6), we get numerous examples
for which bounds (\ref{bo2}) and (\ref{bo3}) are essentially best; we refer to
Proposition \ref{propE} for some explicit values.

We finish the discussion of $f_{k}\left(  n\right)  $ and $f_{-k}\left(
n\right)  $ with general asymptotics of $f_{k}$ and $f_{-k}.$ Since $f_{k}$
and $f_{-k}\left(  n\right)  $ are nonincreasing in $k$, letting $s$ to be the
smallest integer such that $s^{2}\geq k,$ we see that
\[
f_{k+1}\geq\frac{1}{\sqrt{2}s}\text{ \ \ and \ \ }f_{-k}>\frac{1}{\sqrt{2}s};
\]
since $\left(  s-1\right)  ^{2}<k,$ we get the following theorem:

\begin{theorem}
For any $k\geq2,$%
\[
\frac{1}{\sqrt{2\left(  k-1\right)  }}\geq f_{k}>\frac{1}{\sqrt{2\left(
k-1\right)  }+\sqrt{2}}=\frac{1}{\sqrt{2\left(  k-1\right)  }}+O\left(
k^{-1}\right)  ,
\]
and%
\[
\frac{1}{\sqrt{2\left(  k-1\right)  }}\geq f_{-k+1}>\frac{1}{\sqrt{2\left(
k-1\right)  }+\sqrt{2}}=\frac{1}{\sqrt{2\left(  k-1\right)  }}+O\left(
k^{-1}\right)  .
\]

\end{theorem}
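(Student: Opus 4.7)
The upper bounds are already in hand: inequality (\ref{bo2}) gives $f_k(n)/n \leq 1/\sqrt{2(k-1)} - 1/n$, so passing to the limit yields $f_k \leq 1/\sqrt{2(k-1)}$; the corresponding bound for $f_{-k+1}$ follows from (\ref{bo3}) with index $k$ replaced by $k-1$. So the real content is the lower bound, and the plan is to reduce it to Theorem \ref{thNG} by approximating $k$ from above with a perfect square.

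Given $k \geq 2$, I would let $s$ be the smallest positive integer with $s^2 \geq k-1$. By the corollary to Theorem \ref{thKHN} (or equivalently Proposition \ref{propE} with a trivial $s=1$ case), the class $\mathbb{S}_{s^2}$ is nonempty. Applying Theorem \ref{thNG} with its parameter $k$ specialized to $s^2$, there exists an integer $N_0 > s^2$ such that for every $t \geq 1$ some graph of order $N_0 t$ realises
\begin{equation*}
f_{s^2+1}(N_0 t) \;\geq\; \frac{N_0 t}{\sqrt{2s^2}} - 2 \;=\; \frac{N_0 t}{s\sqrt{2}} - 2,
\qquad
f_{-s^2}(N_0 t) \;\geq\; \frac{N_0 t}{s\sqrt{2}}.
\end{equation*}
Dividing by $N_0 t$ and letting $t \to \infty$ gives $f_{s^2+1} \geq 1/(s\sqrt{2})$ and $f_{-s^2} \geq 1/(s\sqrt{2})$. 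Since $f_k$ is clearly nonincreasing in $k$ (and similarly for the negative index, as $\lambda_k$ is monotone), and since $s^2 + 1 \geq k$, we conclude $f_k \geq 1/(s\sqrt{2})$ and $f_{-k+1} \geq 1/(s\sqrt{2})$.

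To finish, I translate $s$ back to $k$. Minimality of $s$ gives $(s-1)^2 < k-1$; because $s-1$ is a nonnegative integer while the right-hand side is an integer, this forces $s-1 < \sqrt{k-1}$ strictly (whether or not $k-1$ is a perfect square, $s - 1 \leq \lceil \sqrt{k-1}\rceil - 1 < \sqrt{k-1}$). Hence
\begin{equation*}
s\sqrt{2} \;<\; \sqrt{2(k-1)} + \sqrt{2},
\end{equation*}
and the two lower bounds $f_k \geq 1/(s\sqrt{2})$ and $f_{-k+1} \geq 1/(s\sqrt{2})$ exceed $1/(\sqrt{2(k-1)} + \sqrt{2})$ strictly, matching the claimed form. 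The final equality $1/(\sqrt{2(k-1)}+\sqrt{2}) = 1/\sqrt{2(k-1)} + O(k^{-1})$ is a routine Taylor expansion.

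There is no substantial obstacle, since all the heavy lifting, namely the construction of matrices in $\mathbb{S}_{s^2}$ and the Kronecker-with-$J_t$ blow-up that converts them into graphs, has already been carried out in Theorems \ref{thKHN} and \ref{thNG}. The only mild subtlety is the strict inequality, which hinges on the integrality of $s-1$ as noted above.
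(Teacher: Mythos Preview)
Your argument is correct and matches the paper's own: choose the least $s$ with $s^2 \geq k-1$, invoke Theorem \ref{thNG} at $s^2$ to get $f_{s^2+1}\geq 1/(s\sqrt{2})$ and $f_{-s^2}\geq 1/(s\sqrt{2})$, then use monotonicity of $f_k$ and $f_{-k}$ in $k$ together with $(s-1)^2<k-1$ to conclude. The only superfluous part is the digression about $\lceil\sqrt{k-1}\rceil$; the strict inequality $s-1<\sqrt{k-1}$ follows immediately from $(s-1)^2<k-1$ by monotonicity of the square root.
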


Finally, let us briefly discuss the function $f_{k}^{\ast}\left(  n\right)  ,$
which is somewhat easier to deal with, and can be derived mainly from
$\lambda_{k}^{\ast}\left(  n\right)  $. First, Theorem \ref{th1} implies
immediately that
\begin{equation}
f_{k}^{\ast}\left(  n\right)  \leq2\lambda_{k}^{\ast}\left(  n\right)
\leq\frac{n}{\sqrt{k-1}}.\label{bo4}%
\end{equation}
This easy bound is rather different from (\ref{bo2}) and (\ref{bo3}), whose
proofs are much subtler anyway. Nonetheless, bound (\ref{bo4}) gives the
correct rate of growth of $f_{k}^{\ast}\left(  n\right)  .$ Using Theorems
\ref{thMx} and \ref{thga}, we immediately come up with the following statements:

\begin{theorem}
If $\mathbb{S}_{k}$ is nonempty, then there is an integer $n>k,$ such that for
every integer $t\geq1,$ there is a graph $G$ of order $nt$ with
\[
f_{k+1}^{\ast}\left(  n\right)  \geq\frac{nt}{\sqrt{k}}-2
\]

\end{theorem}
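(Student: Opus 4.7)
The plan is to adapt the construction from Theorem \ref{thNG} to the singular-value setting, exploiting the fact that we only need matching lower bounds for $\lambda_{k+1}^{\ast}(G)$ and $\lambda_{k+1}^{\ast}(\overline{G})$ rather than for the signed eigenvalues, so one can work directly with $\mathbb{S}_k$ instead of $\mathbb{S}_{2k}$. Because $\mathbb{S}_k$ is nonempty, I invoke properties (1) and (6) to fix a matrix $A\in\mathbb{S}_k$ of some order $n>k$ whose row sums vanish and for which $-A\in\mathbb{S}_k$ as well. By Proposition \ref{proma}, the $k$ nonzero eigenvalues of $A$ all have absolute value $n/\sqrt{k}$.

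For a parameter $t\geq 1$, I set $B=\tfrac{1}{2}(A\otimes J_t+J_{nt})$, a symmetric $(0,1)$-matrix of order $nt$, and let $A_G$ be the matrix obtained by zeroing the diagonal of $B$; then $A_G$ is the adjacency matrix of some graph $G$ on $nt$ vertices. The key eigenvalue computation uses that $A\mathbf{j}_n=\mathbf{0}$, so $\mathbf{j}_{nt}$ lies in the kernel of $A\otimes J_t$ and is simultaneously the top eigenvector of $J_{nt}$; consequently $B$ has spectrum consisting of the simple eigenvalue $nt/2$ along the $\mathbf{j}_{nt}$-direction, exactly $k$ eigenvalues of absolute value $nt/(2\sqrt{k})$ arising from the nonzero spectrum of $A\otimes J_t$, and zeros elsewhere. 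In particular, $\lambda_{k+1}^{\ast}(B)=nt/(2\sqrt{k})$, and Weyl's inequality (Proposition \ref{proW0}) gives $\lambda_{k+1}^{\ast}(G)\geq\lambda_{k+1}^{\ast}(B)-\lambda_1^{\ast}(\mathrm{diag}(B))\geq nt/(2\sqrt{k})-1$.

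For the complement, I observe that the adjacency matrix of $\overline{G}$ equals $J_{nt}-I_{nt}-A_G=\overline{B}-D$, where $\overline{B}=J_{nt}-B=\tfrac{1}{2}(-A\otimes J_t+J_{nt})$ and $D=I_{nt}-\mathrm{diag}(B)$ is a diagonal matrix with entries in $\{0,1\}$. Since $-A\in\mathbb{S}_k$ also has vanishing row sums, the identical eigenvalue analysis applied to $-A$ yields $\lambda_{k+1}^{\ast}(\overline{B})=nt/(2\sqrt{k})$, and a second application of Weyl gives $\lambda_{k+1}^{\ast}(\overline{G})\geq nt/(2\sqrt{k})-1$. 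Adding the two bounds produces $f_{k+1}^{\ast}(nt)\geq \lambda_{k+1}^{\ast}(G)+\lambda_{k+1}^{\ast}(\overline{G})\geq nt/\sqrt{k}-2$, as required.

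The only delicate point is the decoupling argument in the spectrum of $B$: without the row-sum-zero property, the large eigenvalue $nt$ of $J_{nt}$ could combine with eigenvalues of $A\otimes J_t$ on overlapping eigenspaces and potentially destroy the clean $(k+1)$-st singular value of $B$. Property (6) of $\mathbb{S}_k$ is exactly what ensures orthogonality between the $\mathbf{j}_{nt}$-direction and the nonzero eigenspaces of $A\otimes J_t$, so this is the step I would write out carefully; everything else is routine manipulation of Weyl's inequalities and Kronecker-product spectra.
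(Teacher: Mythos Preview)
Your proposal is correct and follows essentially the same approach as the paper, which merely remarks that the theorem follows ``immediately'' from Theorems \ref{thMx} and \ref{thga}: the construction in the proof of Theorem \ref{thMx} is precisely the one you describe (take $A\in\mathbb{S}_k$ with zero rowsums, form $B=\tfrac12(A\otimes J_t+J_{nt})$, zero the diagonal), and your explicit treatment of the complement via $\overline{B}=\tfrac12(-A\otimes J_t+J_{nt})$ supplies the detail the paper omits. Your invocation of property~(6) in place of the explicit $K\otimes A$ doubling is a harmless streamlining, and your observation that zero rowsums force $n>k$ (since $\mathbf{j}_n$ lies in the kernel) neatly justifies the order hypothesis.
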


\begin{theorem}
For any $k\geq2,$%
\[
\frac{1}{\sqrt{k-1}}\geq f_{k}^{\ast}>\frac{1}{\sqrt{k-1}+1}=\frac{1}%
{\sqrt{k-1}}+O\left(  k^{-1}\right)  .
\]

\end{theorem}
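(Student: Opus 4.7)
The plan is to prove the upper and lower bounds separately, using the easy bound (\ref{bo4}) for the former and the previous theorem (applied to the squares $k=s^{2}$, where $\mathbb{S}_{s^{2}}$ is guaranteed to be nonempty by Theorem \ref{thKHN}) for the latter.

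For the upper bound, I would simply divide the inequality (\ref{bo4}) by $n$, obtaining $f_{k}^{\ast}(n)/n \leq 1/\sqrt{k-1}$ for every $n\geq k$, and then pass to the limit $n\to\infty$ using the existence of $f_{k}^{\ast}=\lim_{n\to\infty} f_{k}^{\ast}(n)/n$ established earlier in the section. This gives $f_{k}^{\ast}\leq 1/\sqrt{k-1}$.

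For the lower bound, the idea is to exploit monotonicity of $f_{k}^{\ast}$ in $k$ together with the preceding theorem. Given $k\geq2$, let $s$ be the smallest positive integer with $s^{2}+1\geq k$, equivalently, with $s^{2}\geq k-1$. By Theorem \ref{thKHN} (via clause (i) and Proposition \ref{proma}), the class $\mathbb{S}_{s^{2}}$ is nonempty, so the immediately preceding theorem yields an integer $n>s^{2}$ such that for every $t\geq 1$ some graph $G$ of order $nt$ satisfies $f_{s^{2}+1}^{\ast}(n)\geq nt/s - 2$. Dividing by $nt$ and sending $t\to\infty$ gives $f_{s^{2}+1}^{\ast}\geq 1/s$. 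Since $k\leq s^{2}+1$ and $f_{k}^{\ast}$ is nonincreasing in $k$, it follows that $f_{k}^{\ast}\geq f_{s^{2}+1}^{\ast}\geq 1/s$. By minimality of $s$, we have $(s-1)^{2}<k-1$, hence $s-1<\sqrt{k-1}$, so $s<\sqrt{k-1}+1$, giving the desired strict bound
\[
f_{k}^{\ast}\geq \frac{1}{s} > \frac{1}{\sqrt{k-1}+1}.
\]
The asymptotic expansion $1/(\sqrt{k-1}+1)=1/\sqrt{k-1}+O(k^{-1})$ follows from a routine expansion of $1/(1+1/\sqrt{k-1})$.

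There is no serious obstacle here; the heavy lifting (the construction of matrices in $\mathbb{S}_{s^{2}}$ via Kharaghani's method, and the Nordhaus–Gaddum lower bound derived from them) has already been done in the previous theorems. The only minor subtlety is choosing $s$ as the smallest integer with $s^{2}\geq k-1$ rather than $s^{2}\geq k$, which is what produces the $\sqrt{k-1}$ (rather than $\sqrt{k}$) inside the denominator and matches the upper bound cleanly.
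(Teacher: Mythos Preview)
Your argument is correct and mirrors the paper's approach exactly: the paper derives this theorem by combining the easy upper bound (\ref{bo4}) with the preceding theorem and the same monotonicity--plus--nearest--square argument used for Theorem \ref{thga}. The only minor slip is that for $k=2$ you get $s=1$, where Theorem \ref{thKHN} does not apply; but $\mathbb{S}_{1}\neq\varnothing$ holds trivially (e.g., $J_{n}\in\mathbb{S}_{1}$), so the preceding theorem still yields $f_{2}^{\ast}\geq1>1/2$.
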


\section{\label{KFs}Sums of eigenvalues and sums of singular values}

In addition to individual eigenvalues and singular values of graphs, it is of
interest to consider certain sums thereof. In particular, let
\begin{align*}
\tau_{k}\left(  n\right)   &  =\max_{v\left(  G\right)  =n}\lambda_{1}\left(
G\right)  +\cdots+\lambda_{k}\left(  G\right)  ,\\
\xi_{k}\left(  n\right)   &  =\max_{v\left(  G\right)  =n}\lambda_{1}^{\ast
}\left(  G\right)  +\cdots+\lambda_{k}^{\ast}\left(  G\right)  .
\end{align*}
Note that $\xi_{k}\left(  n\right)  $ is the maximal \emph{Ky Fan }%
$k$\emph{-norm} of a graph of order $n$. In particular, the Ky Fan $n$-norm is
known as the \emph{trace norm} of $G,$ and has been extensively studied under
the name \emph{graph energy,} a concept introduced by Gutman in \cite{Gut78};
see also \cite{GLS12} for the current state of this research. Note that,
$\xi_{n}\left(  n\right)  $ is just the maximum energy of a graph of order
$n,$ which also has been studied, see, e.g., \cite{Hae08}, \cite{KoMo01}, and
\cite{Nik07j}.

The research on graph energy is truly monumental, but with the flexibility of
the parameter $k,$ the Ky Fan $k$-norms offer a considerably vaster
playground. Here we shall focus only on the following principal question,
raised in \cite{Ni11}:

\begin{problem}
\label{MNpro}For any $k\geq1,$ find $\tau_{k}\left(  n\right)  $\ and $\xi
_{k}\left(  n\right)  .$
\end{problem}

For a start, note that the inequality $\lambda_{i}\left(  G\right)
\leq\lambda_{i}^{\ast}\left(  G\right)  $ implies that $\tau_{k}\left(
n\right)  \leq\xi_{k}\left(  n\right)  $ for any $k$ and $n.$ For general
$k>2,$ the function $\tau_{k}\left(  n\right)  $ has been studied by Mohar in
\cite{Moh09}; and in turn, $\xi_{k}\left(  n\right)  $ has been studied by the
author in \cite{Ni11}. The exact values of $\tau_{k}\left(  n\right)  $ and
$\xi_{k}\left(  n\right)  $ are unknown for most values of $k;$ in particular,
neither $\tau_{2}\left(  n\right)  $ nor $\xi_{2}\left(  n\right)  $ are known
yet: see \cite{EMNA08} for $\tau_{2}\left(  n\right)  ,$ and \cite{GHK01} for
$\xi_{2}\left(  n\right)  .$ However, estimating $\tau_{k}\left(  n\right)  $
and $\xi_{k}\left(  n\right)  $ is possible for large $k$. Indeed, Mohar
\cite{Moh09} proved the asymptotics%
\begin{equation}
\frac{1}{2}\left(  \frac{1}{2}+\sqrt{k}-o\left(  k^{-2/5}\right)  \right)
<\frac{\tau_{k}\left(  n\right)  }{n}\leq\frac{1}{2}\left(  1+\sqrt{k}\right)
.\label{Mohbo}%
\end{equation}
Note the gap $1/2+o\left(  1\right)  $ between the upper and lower bounds in
(\ref{Mohbo}), which is very challenging to close. In general, finding
$\tau_{k}\left(  n\right)  $ seems a hard problem, a lot harder than finding
$\xi_{k}\left(  n\right)  $. In particular, it is easy to show that the limit
$\tau_{k}=\lim\limits_{n\rightarrow\infty}\tau_{k}\left(  n\right)  /n$ exists
for any fixed $k\geq1,$ but this limit is not known for any $k\geq2.$ Thus, we
suggest the following concrete conjecture:

\begin{conjecture}
For any $k\geq2,$ there is an $\varepsilon_{k}>0$ such that
\[
\tau_{k}<\frac{1}{2}\left(  1+\sqrt{k}-\varepsilon_{k}\right)  .
\]

\end{conjecture}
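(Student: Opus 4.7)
The plan is a stability analysis of Mohar's bound (\ref{Mohbo}), identifying its unique equality profile and showing it cannot be realized by any graph, then upgrading this structural rigidity to a uniform asymptotic gap.

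First I would re-derive Mohar's bound, tracking equality. For a graph $G$ of order $n$ with $e(G)$ edges,
\begin{equation*}
\tau_k(G)\le\lambda_1(G)+\sqrt{(k-1)\textstyle\sum_{i=2}^k\lambda_i^2(G)}\le\lambda_1(G)+\sqrt{(k-1)\bigl(2e(G)-\lambda_1^2(G)\bigr)},
\end{equation*}
and the right-hand side is maximized, over $\lambda_1\ge 2e/n$ and $e\le\binom{n}{2}$, at $\lambda_1=n(1+1/\sqrt k)/2$, $e=n^2(1+1/\sqrt k)/4$, giving the value $(1+\sqrt k)n/2$. Equality throughout forces: (i) $\lambda_2(G)=\cdots=\lambda_k(G)=n/(2\sqrt k)$; (ii) $\lambda_i(G)=0$ for $i>k$; (iii) $G$ is regular of degree $n(1+1/\sqrt k)/2$. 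But (i)--(ii) yield $\mathrm{tr}\,A(G)=\lambda_1+(k-1)\lambda_2=(1+\sqrt k)n/2>0$, contradicting $\mathrm{tr}\,A(G)=0$, so the Mohar bound is never attained.

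Second, I would quantify this obstruction. Using $\sum_{i>k}\lambda_i(G)=-\tau_k(G)$ and Cauchy--Schwarz, $\sum_{i>k}\lambda_i^2(G)\ge\tau_k(G)^2/(n-k)$, which sharpens Mohar to
\begin{equation*}
\tau_k(G)\le\lambda_1+\sqrt{(k-1)\bigl(2e-\lambda_1^2-\tau_k(G)^2/(n-k)\bigr)}.
\end{equation*}
This correction is $O(1/n)$, vanishes asymptotically, and by itself cannot yield $\varepsilon_k>0$; but it does force near-extremal graphs to spread negative spectral mass across the bottom $n-k$ eigenvalues. I would then pass to a graphon limit $W:[0,1]^2\to[0,1]$ of a near-extremal sequence and apply the Mohar chain at the graphon level, using $\int W^2\le\int W\le\lambda_1(W)$. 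Equality forces $W\in\{0,1\}$ a.e., $\mathbf 1$ to be a top eigenfunction, and the spectrum of $T_W$ to have precisely the extremal profile $(1+1/\sqrt k)/2,\,1/(2\sqrt k),\ldots,1/(2\sqrt k)$ (the latter with multiplicity $k-1$) and no other nonzero eigenvalues. A rank-and-structure analysis of $\{0,1\}$-valued graphons (they are step functions whose rank is bounded by the number of blocks) should rule out this exact combination, and a compactness argument on graphon space then upgrades strict inequality to a uniform $\varepsilon_k>0$, along the lines of Theorem~\ref{thub}'s use of the AFKS Removal Lemma~\cite{AFKS00}.

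The main obstacle is that trace-zero does not survive the graphon limit: for example the limit of $K_n$ is $W\equiv 1$ with $\sum\lambda_i(W)=1$. Thus the contradiction cannot be imported verbatim from the pre-limit; instead, the rigidity must be extracted from the combination of the $\{0,1\}$-constraint, the forced rank bound, and the precise eigenvalue profile, using finite-dimensional algebraic arguments on the block matrix of the candidate step-function graphon. An alternative route would invoke the Removal Lemma at the pre-limit level to perturb a near-extremal $G_n$ into an exact extremal structure and derive a contradiction from the structural characterization, paralleling the strategy indicated for Theorem~\ref{thub}. Either route should produce $\varepsilon_k>0$, though not explicitly in $k$, and in fact closing this gap quantitatively appears to be the most substantial remaining hurdle.
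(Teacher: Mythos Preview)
This statement is presented in the paper as a \emph{conjecture}, not a theorem: the paper offers no proof, and explicitly says that $\tau_k$ ``is not known for any $k\ge 2$.'' So there is no paper proof to compare your proposal against; what follows is an assessment of the proposal on its own merits.

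Your equality analysis of Mohar's chain is correct and does show that no finite graph attains $(1+\sqrt{k})n/2$. You also correctly identify that this alone gives nothing about $\tau_k=\lim_n \tau_k(n)/n$, since the trace-zero obstruction evaporates in the graphon limit. The gap in your write-up is precisely the sentence ``a rank-and-structure analysis of $\{0,1\}$-valued graphons \dots\ should rule out this exact combination'': as written this is an assertion, not an argument, and it is the whole crux.

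That said, the gap is fillable, and here is the missing lemma. At the graphon level your equality conditions force (a) $W\in\{0,1\}$ a.e., (b) $W$ regular with $\lambda_1(W)=\int W$, and (c) all eigenvalues of $T_W$ nonnegative, the nonzero ones being $\lambda_1>\lambda_2=\cdots=\lambda_k>0$. In particular $T_W$ is positive semidefinite of finite rank. Writing $W(x,y)=\langle v(x),v(y)\rangle$ with $v:[0,1]\to\mathbb{R}^k$, the $\{0,1\}$-constraint and a short Fubini argument force $v$ to take only finitely many values, so $W$ is a step graphon with block matrix a symmetric $\{0,1\}$-Gram matrix $M$. A PSD symmetric $\{0,1\}$-matrix with unit diagonal has no induced $3\times 3$ minor of the form $\left[\begin{smallmatrix}1&1&1\\1&1&0\\1&0&1\end{smallmatrix}\right]$, hence its $1$-entries define an equivalence relation: $M$ is block-diagonal with all-ones blocks, and $W$ is a disjoint union of ``cliques'' $S_i\times S_i$. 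Regularity then forces all $|S_i|$ equal, so all nonzero eigenvalues of $T_W$ coincide, contradicting $\lambda_1>\lambda_2$. Thus no graphon attains $(1+\sqrt{k})/2$. Since graphon space is compact in cut distance and each $\lambda_i(\,\cdot\,)$ is cut-continuous, the supremum $\tau_k=\sup_W \sum_{i\le k}\lambda_i(W)$ is achieved and must be strictly below $(1+\sqrt{k})/2$, yielding $\varepsilon_k>0$.

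So your outline points in the right direction, but as submitted it stops exactly where the real work begins; the PSD $\{0,1\}$-graphon classification above is the step you need to supply. The Removal-Lemma alternative you mention is not needed here and would be a detour.
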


In contrast to $\tau_{k}\left(  n\right)  $, we shall find $\xi_{k}\left(
n\right)  $ for infinitely many values of $k$ and $n.$ To begin with, in
\cite{Ni11} it was shown that if $n\geq k\geq1,$ then
\begin{equation}
\xi_{k}\left(  n\right)  \leq\frac{1}{2}\left(  1+\sqrt{k}\right)
n,\label{Nikbo}%
\end{equation}
which strengthens the upper bound (\ref{Mohbo}). In fact, unlike $\tau
_{k}\left(  n\right)  ,$ the function $\xi_{k}\left(  n\right)  $ attains the
upper bound (\ref{Nikbo}) for infinitely many $k$ and $n.$ Indeed, let $H$ be
a symmetric regular Hadamard matrix of order $k,$ with positive rowsums, and
with $-1$ along the main diagonal. Then the matrix
\[
A=\frac{1}{2}\left(  H\otimes J_{n}\right)  +J_{kn}%
\]
is the adjacency matrix of a graph $G$ of order $kn,$ with
\[
\lambda_{1}^{\ast}\left(  G\right)  +\cdots+\lambda_{k}^{\ast}\left(
G\right)  =\frac{1}{2}\left(  1+\sqrt{k}\right)  n,
\]
and so $\xi_{k}\left(  n\right)  $ attains the upper bound (\ref{Nikbo}). It
is known that symmetric regular Hadamard matrix with equal rowsums and with
$-1$ along the main diagonal exist for $k=4m^{4}$ and any $m=1,2,\ldots$, see
\cite{HaXi09} for details. In fact, there are many more cases of $k$ for which
the upper bound (\ref{Nikbo}) is attained.\medskip

First, we shall show that if the bound (\ref{Nikbo}) is attained, then $k$ is
an exact square:

\begin{theorem}
\label{thk}If $G$ is a graph of order $n$ such that
\[
\lambda_{1}^{\ast}\left(  G\right)  +\cdots+\lambda_{k}^{\ast}\left(
G\right)  =\frac{1}{2}\left(  1+\sqrt{k}\right)  n,
\]
then $k$ is an exact square.
\end{theorem}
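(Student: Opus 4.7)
The plan is to trace the chain of inequalities used to prove the upper bound \eqref{Nikbo} and pin down the rigidity forced by equality. Write the adjacency matrix of $G$ as $A = \tfrac{1}{2}(C + J_{n})$, where $C := 2A - J_{n}$ is a symmetric $(-1,1)$-matrix. Since $G$ has no loops, the diagonal of $C$ is identically $-1$, so $\mathrm{tr}\, C = -n$. Subadditivity of the Ky Fan $k$-norm gives
\[
2\,\bigl(\lambda_1^*(G) + \cdots + \lambda_k^*(G)\bigr) = \|C + J_{n}\|_{(k)} \leq \|C\|_{(k)} + \|J_{n}\|_{(k)} = \|C\|_{(k)} + n,
\]
while two applications of Cauchy--Schwarz yield
\[
\|C\|_{(k)} = \sum_{i=1}^{k} \lambda_i^*(C) \leq \sqrt{k \sum_{i=1}^{k} \bigl(\lambda_i^*(C)\bigr)^2} \leq \sqrt{k\cdot \mathrm{tr}\, C^2} = \sqrt{k}\cdot n,
\]
since $\mathrm{tr}\, C^2 = n^2$. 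Combining the two bounds gives $\lambda_1^*(G) + \cdots + \lambda_k^*(G) \leq \tfrac{1}{2}(1+\sqrt{k})n$, reproducing \eqref{Nikbo}.

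Under the hypothesis of the theorem, every inequality in this chain must be an equality. The two Cauchy--Schwarz steps are tight precisely when $\lambda_1^*(C) = \cdots = \lambda_k^*(C)$ and $\lambda_i^*(C) = 0$ for $i > k$, which forces each nonzero singular value of $C$ to equal $n/\sqrt{k}$. By Proposition \ref{proma}, this means $C \in \mathbb{S}_k$.

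Now I would invoke Proposition \ref{proq}: either $k$ is an exact square, or $C$ has equal numbers $n_+ = n_-$ of positive and negative eigenvalues. Since every nonzero eigenvalue of $C$ has absolute value $n/\sqrt{k}$, the second alternative gives
\[
\mathrm{tr}\, C = (n_+ - n_-)\cdot (n/\sqrt{k}) = 0,
\]
which contradicts $\mathrm{tr}\, C = -n$. Hence $k$ must be an exact square.

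The only delicate step is verifying that the decomposition $A = \tfrac{1}{2}(C + J_n)$ together with Ky Fan subadditivity and Cauchy--Schwarz is indeed what powers the original proof of \eqref{Nikbo} in \cite{Ni11}; modulo that, the argument is a short bookkeeping exercise. Note in particular that I do not need to analyse the equality case of the Ky Fan triangle inequality $\|C + J_n\|_{(k)} \leq \|C\|_{(k)} + \|J_n\|_{(k)}$: the Cauchy--Schwarz rigidity already pins down $C$ as a member of $\mathbb{S}_k$, and combined with the trivial trace identity $\mathrm{tr}\, C = -n$ this is enough to conclude via Proposition \ref{proq}.
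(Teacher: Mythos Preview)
Your proof is correct and essentially identical to the paper's: both split $2A = C + J_n$ with $C$ a symmetric $(-1,1)$-matrix, bound $\|C\|_{(k)} \le \sqrt{k}\,n$ via the AM--QM (Cauchy--Schwarz) inequality, apply subadditivity of the Ky Fan norm, and then use the equality case to force $C \in \mathbb{S}_k$ before invoking Proposition~\ref{proq} together with $\mathrm{tr}\,C \ne 0$. The only cosmetic difference is that the paper works with $J_n - 2A$ (trace $+n$) rather than your $C = 2A - J_n$ (trace $-n$).
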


\begin{proof}
Write $\left\Vert A\right\Vert _{\ast k}$ the sum of the $k$ largest singular
values of $A$. Suppose that $G$ is a graph that satisfies the hypothesis and
write $A$ for its adjacency matrix.

Note that $J_{n}-2A$ is a symmetric $\left(  -1,1\right)  $-matrix and so, in
view of the AM-QM inequality, we find that%
\[
\sum_{i=1}^{k}\lambda_{i}^{\ast}\left(  J_{n}-2A\right)  \leq\sqrt{k\sum
_{i=1}^{k}\lambda_{i}^{\ast}{}^{2}\left(  J_{n}-2A\right)  }\leq\sqrt
{k\sum_{i=1}^{n}\lambda_{i}^{\ast}{}^{2}\left(  J_{n}-2A\right)  }=\sqrt{k}n.
\]
Therefore, using the the triangle inequality for the Ky Fan $k$-norm
$\left\Vert X+Y\right\Vert _{\ast k}\leq$ $\left\Vert X\right\Vert _{\ast
k}+\left\Vert Y\right\Vert _{\ast k}$ (see \cite{HoJo94}, p.196), we find
that
\[
\left(  1+\sqrt{k}\right)  n=2\left\Vert A\right\Vert _{\ast k}=\left\Vert
2A\right\Vert _{\ast k}\leq\left\Vert 2A-J_{n}\right\Vert _{\ast k}+\left\Vert
J_{n}\right\Vert _{\ast k}\leq\sqrt{k}n+n.
\]
Thus, equalities hold throughout the above line, and so, $2A-J_{n}$ has $k$
nonzero singular values, which are equal. We get
\[
\lambda_{k}^{\ast}\left(  J_{n}-2A\right)  =n/\sqrt{k},
\]
implying that $J_{n}-2A\in\mathbb{S}_{k}.$ On the other hand, $\mathrm{tr}%
\left(  J_{n}-2A\right)  =n\neq0,$ so $J_{n}-2A$ cannot have the same number
of positive and negative eigenvalues, and Proposition \ref{proq} implies that
$k$ is an exact square.
\end{proof}

The matrix built in Theorem \ref{thj} helps to prove that the converse of the
above theorem is partially true as well.

\begin{theorem}
\label{thck}Let $s$ be an even positive integer. There exists a positive
integer $n,$ such that for every positive integer $t,$ there is a graph $G$ of
order $snt,$ with
\[
\lambda_{1}^{\ast}\left(  G\right)  +\cdots+\lambda_{s^{2}}^{\ast}\left(
G\right)  =\frac{1}{2}\left(  1+s\right)  snt.
\]

\end{theorem}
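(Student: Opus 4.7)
The plan is to construct $G$ explicitly by taking a blow-up of a graph $H$ whose adjacency matrix is obtained from the matrix $B$ produced by Theorem~\ref{thj}.

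First, I would invoke Theorem~\ref{thj} with the given even $s\ge 2$ to obtain an integer $n\ge s$ and a symmetric $(-1,1)$-matrix $B$ of order $sn$ whose nonzero spectrum consists of $s^2$ eigenvalues of absolute value $n$, with $\mathbf{j}_{sn}$ an eigenvector for the eigenvalue $-n$ and every diagonal entry of $B$ equal to $1$. Setting $C=\tfrac12(J_{sn}-B)$, one checks immediately that $C$ is symmetric, has $(0,1)$ entries and zero diagonal; hence $C$ is the adjacency matrix of some graph $H$ on $sn$ vertices.

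Next, I would read off the spectrum of $C$. On the line spanned by $\mathbf{j}_{sn}$ the operator $C$ acts as $\tfrac12(sn-(-n))=\tfrac{n(s+1)}{2}$, while on $\mathbf{j}_{sn}^{\perp}$ the matrix $J_{sn}$ vanishes, so $C$ acts there as $-\tfrac12 B$. By Theorem~\ref{thj}, exactly $s^{2}-1$ eigenvalues of $B$ of magnitude $n$ live in $\mathbf{j}_{sn}^{\perp}$; they produce $s^{2}-1$ further nonzero eigenvalues of $C$, each of absolute value $n/2$.

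Third, I would take $G$ to be the blow-up of $H$ in which every vertex is replaced by an independent set of $t$ vertices; its adjacency matrix is $A=C\otimes J_t$ and its order is $snt$. Because the spectrum of $J_t$ is $\{t,0,\dots,0\}$, every nonzero eigenvalue $\mu$ of $C$ yields a single eigenvalue $t\mu$ of $A$ with the same multiplicity, and all other eigenvalues of $A$ vanish. Summing the top $s^{2}$ singular values then gives
\[
\lambda_{1}^{\ast}(G)+\cdots+\lambda_{s^{2}}^{\ast}(G)=t\cdot\frac{n(s+1)}{2}+(s^{2}-1)\cdot t\cdot\frac{n}{2}=\frac{snt(s+1)}{2},
\]
which is exactly $\tfrac12(1+s)snt$, as required.

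The delicate point is the middle step: one must use the precise multiplicities furnished by Theorem~\ref{thj}, together with the observation that $\mathbf{j}_{sn}$ already accounts for one of $B$'s $-n$-eigenvectors, to conclude that the restriction of $B$ to $\mathbf{j}_{sn}^{\perp}$ carries exactly $s^{2}-1$ eigenvalues of magnitude $n$. The hypothesis that $s$ is even enters here through the integrality of the common rowsum $-n$ of $B$ and of the associated multiplicities $\binom{s+1}{2}$ and $\binom{s}{2}$; once these are in place, the rest of the argument is routine spectral bookkeeping.
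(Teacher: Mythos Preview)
Your argument is correct and is essentially the paper's own proof, just with the two operations swapped: you form $C=\tfrac12(J_{sn}-B)$ first and then tensor with $J_t$, whereas the paper tensors $B$ with $J_t$ first and then forms $\tfrac12(J_{snt}-B\otimes J_t)$; since $J_{sn}\otimes J_t=J_{snt}$, the resulting adjacency matrix is identical.

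One small correction to your closing remark: the parity hypothesis on $s$ does not enter through integrality of the rowsum $-n$ or of the multiplicities (those are integers for every $s$). It enters solely through the \emph{availability} of the matrix in Theorem~\ref{thj}: that construction relies on a symmetric Latin square of order $s$ with constant diagonal, and such squares exist only when $s$ is even. Your invocation of Theorem~\ref{thj} is therefore legitimate precisely because $s$ is assumed even; the rest of your spectral bookkeeping is independent of parity. (Also, the negative multiplicity in Theorem~\ref{thj} is $\binom{s-1}{2}$, not $\binom{s}{2}$, though your final count only uses the total $s^2$ and is unaffected.)
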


Theorem \ref{thck} is proved in Section \ref{ps}. It is as good as one can
get, but we can prove it only if $s$ is even. If $s$ is odd, we can do just
slightly worse, showing that $\xi_{k}\left(  n\right)  $ is just below the
upper bound. To this effect, we shall prove a more general theorem, and deduce
this fact as a corollary. The proof of the theorem is in Section \ref{ps}.

\begin{theorem}
\label{thck1}Suppose that $\mathbb{S}_{k}$ contains a regular matrix $B$ with
nonzero rowsums, say of order $n$. Then for any positive integer $t,$ there is
a graph $G$ of order $nt$ with
\begin{equation}
\lambda_{1}^{\ast}\left(  G\right)  +\cdots+\lambda_{k}^{\ast}\left(
G\right)  \geq\frac{1}{2}\left(  1+\sqrt{k}\right)  nt-k. \label{in3}%
\end{equation}

\end{theorem}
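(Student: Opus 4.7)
The plan is to mimic the blow-up/shift construction used in the proofs of Theorems \ref{thMx} and \ref{thp}: build a symmetric $(0,1)$-matrix by Kronecker-blowing $B$ and translating by the all-ones matrix, then estimate the Ky Fan $k$-norm via the triangle inequality, absorbing the error from zeroing the diagonal.

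First I reduce to the convenient sign of the rowsum. Since $B \in \mathbb{S}_k$ and $B$ is regular, the all-ones vector $\mathbf{j}_n$ is an eigenvector of $B$ to some eigenvalue $r \neq 0$, and Proposition \ref{proma} forces $|r| = n/\sqrt{k}$. By property (1) of the classes $\mathbb{S}_k$, the matrix $-B$ also lies in $\mathbb{S}_k$, so I may assume $r = n/\sqrt{k}$. Now set
\[
A' := \tfrac{1}{2}\bigl(B \otimes J_t + J_{nt}\bigr),
\]
which is a symmetric $(0,1)$-matrix of order $nt$.

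The spectrum of $A'$ is easy to read off. The vector $\mathbf{j}_{nt} = \mathbf{j}_n \otimes \mathbf{j}_t$ is an eigenvector of $B \otimes J_t$ to the eigenvalue $rt = nt/\sqrt{k}$ and of $J_{nt}$ to $nt$, hence an eigenvector of $A'$ to $\tfrac{nt(1+\sqrt{k})}{2\sqrt{k}}$. Every other eigenvector of $B \otimes J_t$ corresponding to a nonzero eigenvalue comes from an eigenvector of $B$ orthogonal to $\mathbf{j}_n$, and is therefore orthogonal to $\mathbf{j}_{nt}$; on such vectors $J_{nt}$ acts as $0$, so they contribute $k-1$ eigenvalues of $A'$ of absolute value $\tfrac{nt}{2\sqrt{k}}$. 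All remaining eigenvalues vanish. Summing the top $k$ singular values,
\[
\|A'\|_{*k} = \frac{nt(1+\sqrt{k})}{2\sqrt{k}} + (k-1)\cdot \frac{nt}{2\sqrt{k}} = \frac{(1+\sqrt{k})\,nt}{2}.
\]

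Finally let $D$ be the diagonal of $A'$ and put $A := A' - D$. Since $A'$ is a symmetric $(0,1)$-matrix, $D$ is a diagonal $(0,1)$-matrix, so all its singular values are $0$ or $1$ and $\|D\|_{*k} \leq k$. The matrix $A$ is then the adjacency matrix of some graph $G$ of order $nt$. Applying the triangle inequality for the Ky Fan $k$-norm (see \cite{HoJo94}, p.~196),
\[
\|A\|_{*k} \ \geq\ \|A'\|_{*k} - \|D\|_{*k} \ \geq\ \frac{(1+\sqrt{k})\,nt}{2} - k,
\]
which is exactly (\ref{in3}). The only mild obstacle in this argument is controlling the sign of $r$, but this is circumvented cleanly by passing from $B$ to $-B$ if necessary; the rest is a direct Kronecker-product spectrum computation together with the standard $\pm D$ correction.
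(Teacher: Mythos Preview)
Your proof is correct and follows essentially the same construction as the paper: reduce to a convenient sign of the rowsum, form $A'=\tfrac12(B\otimes J_t+J_{nt})$, read off its spectrum using that $\mathbf{j}_{nt}$ is a common eigenvector, and then zero the diagonal. The only cosmetic difference is in the last step: the paper bounds each $\lambda_i^{\ast}(G)\geq\lambda_i^{\ast}(A')-1$ via Weyl's inequalities and then sums, whereas you apply the triangle inequality for the Ky Fan $k$-norm in one stroke; both yield the same loss of at most $k$.
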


Dividing both sides of (\ref{in3}) by $nt$ and letting $t\rightarrow\infty,$
we obtain the following corollary.

\begin{corollary}
If \ $\mathbb{S}_{k}$ contains a regular matrix $B$ with nonzero rowsums,
then
\[
\lim\limits_{n\rightarrow\infty}\frac{\xi_{k}\left(  n\right)  }{n}%
=\frac{1+\sqrt{k}}{2}.
\]

\end{corollary}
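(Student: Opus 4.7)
The plan is to sandwich $\xi_{k}(n)/n$ between matching asymptotic bounds. The upper bound $\xi_{k}(n)/n \le (1+\sqrt{k})/2$ holds for all $n\ge k$ by the previously displayed inequality (\ref{Nikbo}), so $\limsup_{n\to\infty}\xi_{k}(n)/n \le (1+\sqrt{k})/2$ is immediate and requires no further work.

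For the matching lower bound, I would apply Theorem \ref{thck1} to the regular matrix $B\in\mathbb{S}_{k}$ guaranteed by the hypothesis; call its order $n_{0}$. The theorem supplies, for every positive integer $t$, a graph $G_{t}$ of order $n_{0}t$ with
\[
\lambda_{1}^{\ast}(G_{t})+\cdots+\lambda_{k}^{\ast}(G_{t})\ \ge\ \tfrac{1}{2}(1+\sqrt{k})\,n_{0}t - k.
\]
Dividing by $n_{0}t$ and letting $t\to\infty$ already yields $\liminf_{t\to\infty}\xi_{k}(n_{0}t)/(n_{0}t)\ge (1+\sqrt{k})/2$ along the subsequence of multiples of $n_{0}$.

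To promote this into a statement about \emph{all} $n$, I would pad with isolated vertices. For any $n\ge n_{0}$, write $n=n_{0}t+r$ with $0\le r<n_{0}$, and let $G'_{t}$ be the disjoint union of $G_{t}$ with $r$ isolated vertices. Then $G'_{t}$ has order $n$, and its nonzero singular values coincide with those of $G_{t}$, so
\[
\xi_{k}(n)\ \ge\ \lambda_{1}^{\ast}(G'_{t})+\cdots+\lambda_{k}^{\ast}(G'_{t})\ =\ \lambda_{1}^{\ast}(G_{t})+\cdots+\lambda_{k}^{\ast}(G_{t})\ \ge\ \tfrac{1}{2}(1+\sqrt{k})\,n_{0}t-k.
\]
Since $n_{0}t/n\to 1$ and $k/n\to 0$ as $n\to\infty$, this gives $\liminf_{n\to\infty}\xi_{k}(n)/n\ge (1+\sqrt{k})/2$, and combining with the upper bound finishes the proof.

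There is essentially no substantive obstacle: Theorem \ref{thck1} has already done all the nontrivial work. The corollary is a bookkeeping exercise that absorbs the additive error $-k$, the $nt$ denominator, and the subsequence restriction to multiples of $n_{0}$. The only mildly non-automatic step is the padding argument, which is standard because the Ky Fan $k$-norm is monotone under disjoint unions (adding isolated vertices only inserts zero singular values).
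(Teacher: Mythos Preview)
Your proposal is correct and follows essentially the same approach as the paper, which simply says ``Dividing both sides of (\ref{in3}) by $nt$ and letting $t\rightarrow\infty$.'' Your version is in fact more complete: you make the upper bound from (\ref{Nikbo}) explicit, and you add the padding-by-isolated-vertices step to pass from the subsequence $\{n_{0}t\}$ to all $n$, a point the paper glosses over.
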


Let us note that the premise that $\mathbb{S}_{k}$ contains a regular matrix,
with nonzero rowsums is not difficult to satisfy. Indeed, Theorem \ref{thj1}
implies that for any integer $s\geq2,$ the set $\mathbb{S}_{s^{2}}$ contains a
regular matrix with nonzero rowsums.\medskip

Theorem \ref{thk} does not shed any light on the case when $k$ is not an exact
square, so we suggest the following concrete conjecture.

\begin{conjecture}
There exist infinitely many integers $k$ such that
\[
\lim\limits_{n\rightarrow\infty}\frac{\xi_{k}\left(  n\right)  }{n}%
<\frac{1+\sqrt{k}}{2}.
\]

\end{conjecture}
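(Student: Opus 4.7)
The plan is to prove the stronger statement that $\lim_{n}\xi_{k}(n)/n<(1+\sqrt{k})/2$ whenever $\mathbb{S}_{k}=\varnothing$. Since by the Corollary following Proposition~\ref{proq} the set $\mathbb{S}_{k}$ is empty for every odd $k$ that is not a perfect square (for instance every odd prime), this would supply infinitely many such $k$ and establish the conjecture. The strategy is a stability version of Theorem~\ref{thk}: supposing for contradiction that $\xi_{k}(G_{n})/n\to(1+\sqrt{k})/2$ along a sequence of graphs $G_{n}$ of order $n$, one aims to extract from $M_{n}:=J_{n}-2A(G_{n})\in\mathbb{U}_{n}$ an honest element of $\mathbb{S}_{k}$.

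First, I would track tightness through the chain of inequalities in Theorem~\ref{thk}'s proof: the hypothesis forces $\left\Vert M_{n}\right\Vert _{\ast k}/n\to\sqrt{k}$, and by sharpness of AM--QM the top $k$ singular values of $M_{n}$ satisfy $\lambda_{i}^{\ast}(M_{n})=n/\sqrt{k}+\epsilon_{i}^{(n)}$ with $\sum_{i\le k}|\epsilon_{i}^{(n)}|=o(n)$, while the tail obeys $\sum_{i>k}\lambda_{i}^{\ast 2}(M_{n})=o(n^{2})$. Passing to a subsequential graphon limit $W\in L^{\infty}([0,1]^{2})$ of $M_{n}$ viewed as $(-1,1)$-kernels on a uniform grid, continuity of the Ky Fan $k$-norm in the cut metric together with $\left\Vert W\right\Vert _{2}^{2}=1$ and $|W|\le 1$ a.e.\ yields a symmetric kernel $W$ with $|W|=1$ a.e.\ and exactly $k$ nonzero singular values, each equal to $1/\sqrt{k}$.

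The key rigidity step is to show that such a $W$ is necessarily a step function on a finite partition of $[0,1]$. Writing $W(x,y)=\sum_{i=1}^{k}c_{i}\phi_{i}(x)\phi_{i}(y)$ with $c_{i}=\pm 1/\sqrt{k}$ and $\{\phi_{i}\}$ orthonormal in $L^{2}$, setting $\Phi(x)=(\phi_{1}(x),\ldots,\phi_{k}(x))$ and $D=\operatorname{diag}(c_{i})$, the constraint $\Phi(x)^{\top}D\Phi(y)\in\{-1,+1\}$ a.e.\ implies that, fixing $k$ points $x_{1},\ldots,x_{k}$ whose $\Phi$-images span $\mathbb{R}^{k}$ (possible since $\operatorname{rank}W=k$), the essential image of $\Phi$ lies in the finite set $\bigcap_{j=1}^{k}\{v:\Phi(x_{j})^{\top}Dv\in\{-1,+1\}\}$ of cardinality at most $2^{k}$. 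Hence $W$ is constant on cells of a partition $\{P_{1},\ldots,P_{m}\}$ with $m\le 2^{k}$, and writing $p_{i}=|P_{i}|$ together with $w_{ij}\in\{-1,+1\}$ for the common value of $W$ on $P_{i}\times P_{j}$, the weighted symmetric matrix $\operatorname{diag}(\sqrt{p_{i}})(w_{ij})\operatorname{diag}(\sqrt{p_{i}})$ has spectrum $\{\pm 1/\sqrt{k}\}^{k}\cup\{0\}^{m-k}$. When the $p_{i}$ are rational with common denominator $q$, blowing up $(w_{ij})$ with block sizes $qp_{i}$ produces an element of $\mathbb{S}_{k}$ of order $q$, contradicting $\mathbb{S}_{k}=\varnothing$.

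The principal obstacle is the irrational-weight case: partition weights from an arbitrary graphon limit need not be rational, and neither the discreteness of $M_{n}$ nor the exact trace identity $\operatorname{tr}(M_{n})=n$ survive the limit (the diagonal of $M_{n}$ being a null set in the graphon picture). Two remedies suggest themselves: (i) a quantitative stability theorem at the matrix level, showing that for large $n$ the matrix $M_{n}$ is a near-blow-up of a fixed finite matrix with integer block sizes, forcing $p_{i}=q_{i}/n$ with $q_{i}\in\mathbb{Z}$; or (ii) a refined convergence that retains trace information, e.g.\ by tracking the $n^{-1}$-thickening of the diagonal of $M_{n}$ through the limit to recover the constraint $\sum p_{i}w_{ii}=(a-b)/\sqrt{k}$ (with $a+b=k$) in a form that forces rationality after suitable rescaling. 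Making either approach rigorous is the hardest part, as the graphon limit is a coarse object and extracting the integer structure underpinning $\mathbb{S}_{k}$ requires genuinely new input.
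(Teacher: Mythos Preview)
The statement you are attempting to prove is labelled a \emph{Conjecture} in the paper; the author offers no proof, so there is nothing to compare your argument against. Your proposal is an attack on an open problem and must be judged on its own.

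The first half of your outline is essentially correct and is a clean stability version of Theorem~\ref{thk}. If $\xi_{k}(G_{n})/n\to(1+\sqrt{k})/2$, tightness forces $\lambda_{i}^{\ast}(M_{n})/n\to 1/\sqrt{k}$ for $i\le k$, and since individual singular values are continuous in the cut metric, any subsequential kernel limit $W$ of the $M_{n}$ has $\sum_{i\le k}\lambda_{i}^{\ast 2}(W)=1$. Combined with $|W|\le 1$ a.e.\ this gives $\|W\|_{2}^{2}=1$, hence $|W|=1$ a.e.\ and $\lambda_{i}^{\ast}(W)=0$ for $i>k$. One caveat: the $L^{2}$-norm is \emph{not} continuous in the cut metric, so you cannot simply assert $\|W\|_{2}^{2}=1$; you must derive it indirectly as above. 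Your argument that $W$ is a step function on at most $2^{k}$ parts, via the finite essential range of $\Phi$, is correct and attractive.

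The gap you flag, however, is genuine and, as far as I can see, not merely technical. What your argument actually establishes is that $\lim_{n}\xi_{k}(n)/n=(1+\sqrt{k})/2$ forces the existence of a symmetric $\{\pm1\}$-valued step kernel on $[0,1]^{2}$ whose $k$ nonzero singular values all equal $1/\sqrt{k}$---a ``weighted'' analogue of $\mathbb{S}_{k}$ in which the cell measures $p_{1},\ldots,p_{m}$ are arbitrary positive reals summing to $1$. You have \emph{not} shown that this weighted class is empty whenever $\mathbb{S}_{k}$ is, and Proposition~\ref{proq} does not transfer: once the diagonal entries $w_{ii}$ take both signs, the trace constraint $\sum_{i}w_{ii}p_{i}=(n_{+}-n_{-})/\sqrt{k}$ is perfectly consistent with irrational $p_{i}$ and odd non-square $k$. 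Neither of your proposed remedies closes this. Remedy~(i), a quantitative near-blow-up statement for $M_{n}$, would at best yield integer block sizes $q_{i}$ with $q_{i}/n\to p_{i}$; nothing forces the limiting ratios to be rational. Remedy~(ii) asks cut convergence to remember the diagonal of $M_{n}$, which is a null set in the kernel picture and is simply invisible to that topology.

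In summary, you have reduced the conjecture to a sharp and natural question---does the weighted analogue of $\mathbb{S}_{k}$ coincide with $\mathbb{S}_{k}$ in the sense that one is empty iff the other is?---but that question appears to be open and may be as hard as the conjecture itself. What you have is a promising programme, not a proof.
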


We end up this section with the easy asymptotics%
\[
\frac{\sqrt{k}}{2}\leq\lim\limits_{n\rightarrow\infty}\frac{\xi_{k}\left(
n\right)  }{n}\leq\frac{1+\sqrt{k}}{2},
\]
whose proof is omitted.

\section{\label{NBS}Notation, background, and support}

For graph notation and concepts undefined here, the reader is referred to
\cite{Bol98}. For general reference on graph spectra, see \cite{CRS10}; for
reference on Hadamard matrices and symmetric Latin squares, see \cite{CrKh07}
and \cite{IoSh06}; for reference on strongly regular graphs and their
eigenvalues, see \cite{GoRo01}.\medskip

We write $I_{n}$ and $J_{n}$ for the identity and the all ones matrix of order
$n.$ The $n$-dimensional vector of all ones is denoted by $\mathbf{j}_{n}.$ As
usual, the Kronecker product of two matrices $A$ and $B$ is denoted by
$A\otimes B.$ We recall that if $A$ and $B$ are square, then the spectrum of
$A\otimes B$ consists are all products of eigenvalues of $A$ and eigenvalues
of $B,$ with multiplicities counted. Also, the Kronecker product of symmetric
matrices is symmetric.\medskip

In this paper \emph{regular matrix} means a matrix whose rowsums are
equal.\medskip

Next, we shall give necessary details on Weyl's inequalities, graphs blowups,
Taylor strongly regular graphs, and symmetric Latin squares.\medskip

\subsection{\label{WS}Weyl's inequalities}

If $A$ is a Hermitian matrix of order $n,$ write $\lambda_{1}\left(  A\right)
,\ldots,\lambda_{n}\left(  A\right)  $ for its eigenvalues ordered as
$\lambda_{1}\left(  A\right)  \geq\cdots\geq\lambda_{n}\left(  A\right)  .$
Weyl proved the following useful inequalities for the eigenvalues of sums of
Hermitian matrices, (see, e.g. \cite{HoJo88}, p. 181):

Let $A$ and $B$ be Hermitian matrices of order $n,$ and let $1\leq i\leq n$
and $1\leq j\leq n.$ Then
\[
\lambda_{i}(A)+\lambda_{j}(B)\leq\lambda_{i+j-n}(A+B),\text{ if }i+j\geq n+1.
\]
\medskip

The following two immediate corollaries are used throughout the paper.

\begin{proposition}
\label{proW0}Suppose that $A^{\prime}$ is a symmetric $\left(  0,1\right)
$-matrix of order $n.$ If $A$ is the matrix obtained by zeroing the main
diagonal of $A^{\prime}$ and $1\leq k\leq n,$ then%
\[
\lambda_{k}\left(  A\right)  \geq\lambda_{k}(A^{\prime})-1.
\]

\end{proposition}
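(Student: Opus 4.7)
The plan is a direct application of Weyl's inequality, exploiting the fact that $A'$ differs from $A$ only by a diagonal perturbation. Let $D := A' - A$. Since $A'$ is a symmetric $(0,1)$-matrix and $A$ is obtained by zeroing its diagonal, $D$ is the diagonal matrix whose entries are precisely the diagonal entries of $A'$; hence $D$ is a diagonal $(0,1)$-matrix. Its eigenvalues are therefore $0$'s and $1$'s, so in particular $\lambda_{1}(D) \leq 1$, which gives $\lambda_{n}(-D) = -\lambda_{1}(D) \geq -1$.

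With this in hand, I would invoke the form of Weyl's inequality stated just above, namely $\lambda_i(X) + \lambda_j(Y) \leq \lambda_{i+j-n}(X+Y)$ whenever $i + j \geq n+1$, with $X = A'$, $Y = -D$, $i = k$, and $j = n$. The hypothesis becomes $k + n \geq n + 1$, which holds for any $k \geq 1$, and the conclusion reads
\[
\lambda_{k}(A') + \lambda_{n}(-D) \leq \lambda_{k+n-n}(A' - D) = \lambda_{k}(A).
\]
Combining with $\lambda_{n}(-D) \geq -1$ then yields $\lambda_{k}(A) \geq \lambda_{k}(A') - 1$, as required.

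No real obstacle arises: once one observes that the diagonal perturbation $D$ has all eigenvalues in $\{0,1\}$, the proposition is a one-line consequence of Weyl's inequality. The only mild care needed is to choose the indices $(i,j)=(k,n)$ so that the subscript on the right-hand side of Weyl collapses to $k$; any other choice either fails the hypothesis $i+j \geq n+1$ or produces a less informative inequality.
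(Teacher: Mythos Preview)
Your proof is correct and follows essentially the same approach as the paper: identify the diagonal $(0,1)$-difference $D=A'-A$, note $\lambda_1(D)\le 1$, and apply Weyl's inequality to conclude. Your version is in fact slightly cleaner, since you invoke directly the form of Weyl stated in the paper (with indices $(k,n)$ applied to $A'$ and $-D$), whereas the paper's one-line argument tacitly uses the dual upper-bound form.
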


Indeed, $X=A^{\prime}-A$ is a $\left(  0,1\right)  $-diagonal matrix, and so
$\lambda_{1}\left(  A\right)  \leq1$. Therefore,
\[
\lambda_{k}\left(  A\right)  +1\geq\lambda_{k}\left(  A\right)  +\lambda
_{1}\left(  A\right)  \geq\lambda_{k}(A^{\prime}).
\]

\begin{proposition}
\label{proW}If $G$ is a graph of order $n$ and $2\leq k\leq n,$ then
\[
\lambda_{k}\left(  G\right)  +\lambda_{n-k+2}(\overline{G})\leq-1.
\]

\end{proposition}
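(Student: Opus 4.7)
The plan is to apply Weyl's inequality (in the form stated just above the proposition) to the pair $A = A(G)$ and $B = A(\overline{G})$. The crucial observation is that the adjacency matrices of $G$ and $\overline{G}$ are complementary in a very rigid way, namely
\[
A(G) + A(\overline{G}) = J_n - I_n.
\]
This sum has a completely explicit spectrum: the eigenvector $\mathbf{j}_n$ yields eigenvalue $n-1$, and the orthogonal complement of $\mathbf{j}_n$ is an $(n-1)$-dimensional eigenspace for $-1$. In particular $\lambda_2(J_n - I_n) = -1$.

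Next, I would apply Weyl's inequality with indices $i = k$ and $j = n-k+2$. The hypothesis $2 \leq k \leq n$ ensures both indices lie in $\{1,\dots,n\}$, and moreover $i + j = n+2 \geq n+1$, so the inequality applies. It gives
\[
\lambda_k\bigl(A(G)\bigr) + \lambda_{n-k+2}\bigl(A(\overline{G})\bigr) \leq \lambda_{i+j-n}\bigl(A(G) + A(\overline{G})\bigr) = \lambda_2(J_n - I_n) = -1,
\]
which is exactly the desired conclusion.

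There is essentially no obstacle here: once one recognizes the identity $A(G) + A(\overline{G}) = J_n - I_n$ and picks the correct index pair so that $i+j-n = 2$, the argument is two lines. The only thing to verify carefully is the index bookkeeping in Weyl's inequality — that $k + (n-k+2) = n+2 \geq n+1$ is the condition that legitimizes the inequality, and that one really lands on the second eigenvalue of $J_n - I_n$, which is $-1$ rather than $n-1$. This is why the proposition asserts $\leq -1$ rather than the trivial bound $\leq n-1$, and explains the appearance of the shift by $2$ in the index $n-k+2$.
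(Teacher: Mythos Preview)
Your proof is correct and is essentially the same as the paper's: both apply Weyl's inequality to $A(G)+A(\overline{G})=J_n-I_n=A(K_n)$ and use that $\lambda_2(K_n)=-1$. The paper phrases the sum as the adjacency matrix of $K_n$ rather than writing $J_n-I_n$, but the argument is identical.
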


Indeed, if $A$ and $\overline{A}$ are the adjacency matrices of $G$ and
$\overline{G},$ then $A+\overline{A}$ is the adjacency matrix of the complete
graph $K_{n}.$ Hence $\lambda_{k}\left(  G\right)  +\lambda_{n-k+2}%
(\overline{G})\leq\lambda_{k}\left(  K_{n}\right)  =-1.$\medskip

\subsection{\textbf{Blowups of graphs and their eigenvalues}}

Given a graph $G$ and an integer $t\geq1,$ replace each vertex of $G$ by an
independent set on $t$ vertices and each edge of $G$ by a complete bipartite
graph $K_{t,t}.$ Write $G^{\left(  t\right)  }$ for the resulting graph and
call it a \emph{blowup} of $G$.

If $G$\ is a graph of order $n,$ then $G^{\left(  t\right)  }$ is a graph of
order $nt$ and its adjacency matrix $A\left(  G^{\left(  t\right)  }\right)  $
is given by the equation
\[
A\left(  G^{\left(  t\right)  }\right)  =A\left(  G\right)  \otimes J_{t}.
\]

This algebraic representation of $A\left(  G^{\left(  t\right)  }\right)  $
gives a key to its spectrum:

\begin{proposition}
\label{prob}If $t\geq1$ and $G$ is a graph of order $n,$ with eigenvalues
$\lambda_{1}\left(  G\right)  ,\ldots,\lambda_{n}\left(  G\right)  ,$ then the
eigenvalues of $G^{\left(  t\right)  }$ are $\lambda_{1}\left(  G\right)
t,\ldots,\lambda_{n}\left(  G\right)  t,$ together with $\left(  t-1\right)
n$ additional zeros.
\end{proposition}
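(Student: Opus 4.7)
The plan is to use directly the Kronecker product representation $A(G^{(t)}) = A(G) \otimes J_t$ that was recorded immediately above the statement, together with the standard fact (also already quoted in the excerpt, in Section \ref{NBS}) that the spectrum of a Kronecker product of two square matrices consists of all products of their eigenvalues, counted with multiplicities.

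First I would compute the spectrum of $J_t$. Since $J_t$ has rank $1$ and trace $t$, its eigenvalues are $t$ (simple, with eigenvector $\mathbf{j}_t$) and $0$ with multiplicity $t-1$. Next I would write the eigendecomposition of $A(G)$ as $A(G)\mathbf{v}_i = \lambda_i(G)\mathbf{v}_i$ for $i=1,\dots,n$, where the $\mathbf{v}_i$ form an orthonormal basis of $\mathbb{R}^n$, and pick an orthonormal basis $\mathbf{u}_1 = \mathbf{j}_t/\sqrt{t}, \mathbf{u}_2, \dots, \mathbf{u}_t$ of $\mathbb{R}^t$ with $J_t \mathbf{u}_1 = t\mathbf{u}_1$ and $J_t \mathbf{u}_j = 0$ for $j\geq 2$.

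Then I would verify that the vectors $\mathbf{v}_i \otimes \mathbf{u}_j$, for $1\leq i\leq n$ and $1\leq j\leq t$, form an orthonormal basis of $\mathbb{R}^{nt}$ and are eigenvectors of $A(G)\otimes J_t$ with eigenvalues $\lambda_i(G) \cdot \mu_j$, where $\mu_1 = t$ and $\mu_j = 0$ for $j\geq 2$. This yields exactly the $n$ eigenvalues $t\lambda_1(G),\dots,t\lambda_n(G)$ (from $j=1$) plus $n(t-1)$ additional zeros (from $j\geq 2$), which is the claimed multiset.

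There is no real obstacle here: the proof is a direct application of the Kronecker product spectral theorem. The only thing one has to be slightly careful about is bookkeeping the multiplicities so that the total count $nt$ of eigenvalues agrees with the order of $G^{(t)}$, which it does: $n + n(t-1) = nt$.
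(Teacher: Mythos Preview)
Your proof is correct and is exactly the argument the paper intends: it states the Kronecker product representation $A(G^{(t)})=A(G)\otimes J_t$ and, in Section~\ref{NBS}, records that the spectrum of $A\otimes B$ consists of all products of eigenvalues with multiplicities, leaving the rest implicit. Your explicit computation of the spectrum of $J_t$ and the multiplicity count simply fills in those omitted details.
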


Most often we shall use the following variation of the blow-up operation:
given a graph $G$ and an integer $t\geq1,$ replace each vertex of $G$ by a
complete graph on $t$ vertices and each edge of $G$ by a complete bipartite
graph $K_{t,t}.$ Write $G^{\left[  t\right]  }$ for the resulting graph and
call it a \emph{closed blowup} of $G$.

If $G$\ is a graph of order $n,$ then $G^{\left[  t\right]  }$ is a graph of
order $nt$ and its adjacency matrix $A\left(  G^{\left[  t\right]  }\right)  $
is given by the equation
\[
A\left(  G^{\left[  t\right]  }\right)  =\left(  A\left(  G\right)
+I_{n}\right)  \otimes J_{t}-I_{nt}.
\]

This algebraic representation of $A\left(  G^{\left[  t\right]  }\right)  $
can be used to find the spectrum of $G^{\left[  t\right]  }$:

\begin{proposition}
\label{probu}If $t\geq1$ and $G$ is a graph of order $n,$ with eigenvalues
$\lambda_{1}\left(  G\right)  ,\ldots,\lambda_{n}\left(  G\right)  ,$ then the
eigenvalues of $G^{\left[  t\right]  }$ are $\lambda_{1}\left(  G\right)
t+t-1,\ldots,\lambda_{n}\left(  G\right)  t+t-1,$ together with $\left(
t-1\right)  n$ additional $-1$'s.
\end{proposition}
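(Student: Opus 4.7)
The plan is to read off the spectrum of $A(G^{[t]})$ directly from the algebraic identity
\[
A(G^{[t]}) = (A(G) + I_n) \otimes J_t - I_{nt}
\]
using standard facts about Kronecker products, exactly as was done for $G^{(t)}$ in Proposition \ref{prob}. The only novelty compared to the blow-up case is the shift by $-I_{nt}$, which is harmless because scalar matrices commute with everything and translate every eigenvalue by a constant.

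First I would record the spectra of the two factors. Since $A(G) + I_n$ and $A(G)$ share eigenvectors, the eigenvalues of $A(G) + I_n$ are exactly $\lambda_1(G)+1,\ldots,\lambda_n(G)+1$. The matrix $J_t$ has eigenvalue $t$ (with eigenvector $\mathbf{j}_t$) once, and eigenvalue $0$ with multiplicity $t-1$. Applying the Kronecker rule (mentioned in Section \ref{NBS}), the eigenvalues of $(A(G)+I_n)\otimes J_t$ are therefore the $n$ numbers $(\lambda_i(G)+1)\cdot t$, together with $n(t-1)$ additional zeros coming from the $0$-eigenvalues of $J_t$.

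Second, I would account for the $-I_{nt}$ term. The matrix $(A(G)+I_n)\otimes J_t$ is symmetric, so it is diagonalizable by an orthonormal basis of eigenvectors; each such eigenvector is also an eigenvector of $I_{nt}$ with eigenvalue $1$. Hence the spectrum of $A(G^{[t]}) = (A(G)+I_n)\otimes J_t - I_{nt}$ is obtained by subtracting $1$ from each eigenvalue computed in the previous step. This gives the $n$ values $(\lambda_i(G)+1)t - 1 = \lambda_i(G)\,t + t - 1$ and $n(t-1)$ copies of $0-1=-1$, matching the claim exactly.

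There is essentially no obstacle here: the proof is a one-line computation once the Kronecker identity for $A(G^{[t]})$ is in place. The only point worth flagging is that the identity itself must be verified, which is immediate from the definition of the closed blowup — within each ``vertex cluster'' of $G^{[t]}$ the induced subgraph is $K_t$ (contributing $J_t - I_t$, i.e., the $I_n$-part tensored with $J_t$ minus the diagonal), while between two clusters joined by an edge of $G$ one gets $K_{t,t}$ (contributing $A(G)\otimes J_t$); the off-diagonal $-I_{nt}$ correction is zero there, so the two pieces combine into $(A(G)+I_n)\otimes J_t - I_{nt}$ as stated.
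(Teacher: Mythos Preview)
Your proof is correct and follows exactly the approach the paper intends: the paper states the identity $A(G^{[t]}) = (A(G)+I_n)\otimes J_t - I_{nt}$ immediately before the proposition and leaves the spectral computation to the reader, and your argument fills in precisely that computation via the Kronecker product spectrum and the scalar shift.
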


\medskip

\subsection{\label{TS}Taylor's strongly regular graphs and their complements}

In \cite{Tay71,Tay77} Taylor came up with a remarkable family of strongly
regular graphs $T\left(  q\right)  ,$ defined for every odd prime power $q,$
and with parameters%
\[
v=q^{3},\text{ \ }k=\frac{1}{2}\left(  q-1\right)  \left(  q^{2}+1\right)
,\text{ \ }a=\frac{1}{4}\left(  q-1\right)  ^{3}-1,\text{ \ \ }c=\frac{1}%
{4}\left(  q-1\right)  \left(  q^{2}+1\right)  ,
\]
Following the general rules, one finds that the eigenvalues of $T\left(
q\right)  $ are%
\begin{align*}
\lambda_{1}\left(  T\left(  q\right)  \right)   &  =\text{ \ }\frac{1}%
{2}\left(  q-1\right)  \left(  q^{2}+1\right)  \text{ with multiplicity 1};\\
\lambda_{2}\left(  T\left(  q\right)  \right)   &  =\text{ \ }\frac{1}%
{2}\left(  q-1\right)  ,\text{\ with multiplicity }\left(  q-1\right)  \left(
q^{2}+1\right)  ;\\
\lambda_{n}\left(  T\left(  q\right)  \right)   &  =-\frac{1}{2}\left(
q^{2}+1\right)  ,\text{ with multiplicity \ }q\left(  q-1\right)  .
\end{align*}
The complement $\overline{T\left(  q\right)  }$ is a strongly regular graph
with parameters%
\[
v=q^{3},\text{ \ }k=\frac{1}{2}\left(  q+1\right)  \left(  q^{2}-1\right)
,\text{ \ }a=\frac{1}{4}\left(  q+3\right)  \left(  q^{2}-3\right)  ,\text{
\ \ }c=\frac{1}{4}\left(  q+1\right)  \left(  q^{2}-1\right)  .
\]
For the eigenvalues of $\overline{T\left(  q\right)  }$ one finds that
\begin{align}
\lambda_{1}(\overline{T\left(  q\right)  }) &  =\text{ \ }\frac{1}{2}\left(
q+1\right)  \left(  q^{2}-1\right)  \text{ with multiplicity 1};\nonumber\\
\lambda_{2}(\overline{T\left(  q\right)  }) &  =\text{ \ }\frac{1}{2}\left(
q^{2}-1\right)  ,\text{ with multiplicity \ }q\left(  q-1\right)
;\label{ref}\\
\lambda_{n}(\overline{T\left(  q\right)  }) &  =-\frac{1}{2}\left(
q+1\right)  ,\text{\ with multiplicity }\left(  q-1\right)  \left(
q^{2}+1\right)  .\nonumber
\end{align}

\subsubsection{Some analytic properties of Taylor graphs}

Below we focus on certain properties of the Taylor graphs that may be of
interest to researchers in spectral and extremal graph theory, as well as in
quasi-random (\cite{CGW89}) and pseudo-random (\cite{KrSu06}, \cite{Tho1},
\cite{Tho2}) graphs. To simplify the view on the graph $T\left(  q\right)  $
for sufficiently large $q,$ we let $q^{3}=n,$ and disregard low order terms
when needed. Then $T\left(  q\right)  $ is a $\left(  n/2\right)  $-regular
graph $G$ of order $n,$ with the following properties:

\begin{enumerate}
\item Every two distinct vertices of $G$ have $\thickapprox n/4$ common
neighbors, and the same holds for $\overline{G}$. Therefore, both $G$ and
$\overline{G}$ are quasi-random (pseudo-random) graphs of density $1/2$;

\item For the spectrum of $G$ one finds that
\begin{align*}
\lambda_{1}\left(  G\right)   &  \thickapprox\text{ \ }n/2\text{ with
multiplicity }1;\\
\lambda_{2}\left(  G\right)   &  \thickapprox\text{ \ }n^{1/3}/2,\text{\ with
multiplicity}\thickapprox n;\\
\lambda_{n}\left(  G\right)   &  \thickapprox-n^{2/3}/2,\text{\ with
multiplicity\ }n^{2/3}.
\end{align*}
Therefore, almost all eigenvalues of $G$ are positive.

\item Nonetheless, the sum of squares of the non-principal positive
eigenvalues of $G$ is a vanishing proportion of the sum of squares of all
eigenvalues:
\[
\sum_{\lambda_{i}\left(  G\right)  >0,i>1}\lambda_{i}^{2}\left(  G\right)
\thickapprox\frac{1}{2}n^{5/3}=o\left(  1\right)  \sum_{i=1}^{n}\lambda
_{i}^{2}\left(  G\right)  =o\left(  1\right)  e\left(  G\right)  .
\]

\item For the spectrum of $\overline{G}$ one finds that
\begin{align*}
\lambda_{1}(\overline{G}) &  \thickapprox\text{ \ }n/2\text{ with multiplicity
}1;\\
\lambda_{2}(\overline{G}) &  \thickapprox\text{ \ }n^{2/3}/2,\text{ \ with
multiplicity}\thickapprox n^{2/3};\\
\lambda_{n}(\overline{G}) &  \thickapprox-n^{1/3}/2,\text{\ with
multiplicity}\thickapprox n.
\end{align*}
Therefore, almost all eigenvalues of $\overline{G}$ are negative.

\item Nonetheless, the sum of squares of the negative eigenvalues of
$\overline{G}$ is a vanishing proportion of the sum of squares of all
eigenvalues:%
\[
\sum_{\lambda_{i}(\overline{G})<0}\lambda_{i}^{2}(\overline{G})\thickapprox
\frac{1}{2}n^{5/3}=o\left(  1\right)  \sum_{i=1}^{n}\lambda_{i}^{2}%
(\overline{G})=o\left(  1\right)  e(\overline{G}).
\]

\end{enumerate}

\medskip

\subsection{\label{LS}Some symmetric Latin squares}

In the proofs of Theorems \ref{thKHN}, \ref{thj}, and \ref{thj1} we shall use
two types of symmetric Latin squares: back-circulant Latin square and
symmetric Latin square with constant diagonal. These constructions are simple
and well-known, but for reader's sake we shall describe them below.

Let $s$ be a positive integer. The back-circulant Latin square of size $s$
with symbol set $\left\{  1,\ldots,s\right\}  $ is an $s\times s$ square
matrix $L=\left[  l_{i,j}\right]  ,$ with $l_{i,j}$ given by
\[
l_{i,j}=(\left(  i+j\right)  \text{ }\operatorname{mod}\text{ }s)+1,\text{
}1\leq i,j\leq s.
\]
Obviously $L$ is a symmetric Latin square and its entries belong to $\left\{
1,\ldots,s\right\}  .$

Note that if $L$ is a symmetric Latin square of odd order with symbol set $S$,
then every symbol $s\in S$ occurs above the main diagonal as many times as
below it; hence, $s$ also occurs on the main diagonal, as the total number of
occurrences of $s$ is odd. Therefore, the main diagonal of $L$ contains each
symbol exactly once.

Next we want to construct symmetric Latin squares with constant diagonals. By
the above observation, the order of such Latin square cannot be odd, and for
any even $s,$ we shall give a construction, which seems well-known: we borrow
it from \cite{IoSh06}. Thus, let $s$ be an even positive integer, and define
an $s\times s$ square matrix $L=\left[  l_{i,j}\right]  ,$ with entries given
by
\[
l_{i,j}=\left\{
\begin{array}
[c]{ll}%
s, & \text{if \ }1\leq i\leq s\text{ \ and }i=j;\\
(\left(  i+j\right)  \operatorname{mod}\text{ }\left(  s-1\right)  )+1, &
\text{if \ }1\leq i<s,\text{ }1\leq j<s,\text{ and }i\neq j;\\
(\text{ }2j\text{ }\operatorname{mod}\text{ }\left(  s-1\right)  )+1, &
\text{if \ }i=s\text{ and\ \ }1\leq j<s;\\
(\text{ }2i\text{ }\operatorname{mod}\text{ }\left(  s-1\right)  )+1, &
\text{if \ }1\leq i<s\text{ \ and }j=s.
\end{array}
\right.
\]
The matrix $L$ is a symmetric Latin square with symbol set $\left\{
1,\ldots,s\right\}  $ and the symbol $s$ along the main diagonal. For example,
for $s=2,4,$ and $6,$ this construction gives%
\[
L=\left[
\begin{array}
[c]{cc}%
2 & 1\\
1 & 2
\end{array}
\right]  ,\text{ \ }L=\left[
\begin{array}
[c]{cccc}%
4 & 1 & 2 & 3\\
1 & 4 & 3 & 2\\
2 & 3 & 4 & 1\\
3 & 2 & 1 & 4
\end{array}
\right]  \text{ \ }L=\left[
\begin{array}
[c]{cccccc}%
6 & 4 & 5 & 1 & 2 & 3\\
4 & 6 & 1 & 2 & 3 & 5\\
5 & 1 & 6 & 3 & 5 & 2\\
1 & 2 & 3 & 6 & 2 & 4\\
2 & 3 & 5 & 2 & 6 & 1\\
3 & 5 & 2 & 4 & 1 & 6
\end{array}
\right]  .
\]
\medskip

\section{\label{ps}Proofs of some theorems}

\bigskip

\subsection{Proofs of Theorems \ref{th2}, \ref{thglb}, and \ref{thSB}}

\medskip

\begin{proof}
[\textbf{Proof of Theorem \ref{th2}}]We shall prove only the first bound, as
the other one follows by Proposition \ref{spro}. Let $\overline{T\left(
q\right)  }$ be the complement of the Taylor strongly regular graph $T\left(
q\right)  $ of order $q^{3}$ (see \ref{TS} for details). Let $G$ be a closed
blowup of $\overline{T\left(  q\right)  },$ i.e., $G=\overline{T\left(
q\right)  }^{\left[  t\right]  },$ and let $n=tq^{3}=v\left(  G\right)  .$
Then
\[
c_{k}\geq\sup\frac{\lambda_{k}\left(  G\right)  }{n}=\sup\frac{\lambda
_{q\left(  q-1\right)  +1}(\overline{T\left(  q\right)  })t+t-1}{q^{3}t}%
=\sup\frac{q^{2}+1}{2q^{3}}-\frac{1}{q^{3}t}=\frac{q^{2}+1}{2q^{3}}.
\]
To finish the proof we need to show that
\[
\frac{q^{2}+1}{2q^{3}}\geq\frac{1}{2\sqrt{q^{2}-q}+1}=\frac{1}{2\sqrt{k-1}%
+1}.
\]
This inequality follows from%
\[
\frac{q^{2}+1}{2q^{3}}>\frac{2q-1}{4q^{2}+2q-1}>\frac{1}{2\sqrt{q^{2}-q}+1}%
\]
after some simple algebra, which we omit.
\end{proof}

\bigskip

\begin{proof}
[\textbf{Proof of Theorem \ref{thglb}}]Fix a sufficiently large integer $k,$
and let $q$ be the smallest prime such that%
\[
q\left(  q-1\right)  +1\geq k.
\]
A result of Baker, Harman, and Pintz \cite{BHP01} on the distribution of
primes implies that if $k$ is sufficiently large, then%
\[
q\leq\sqrt{k}+1/2+\left(  \sqrt{k}+1/2\right)  ^{21/40}.
\]
It is not hard to see that if $k$ is sufficiently large, then $q<\sqrt
{k-1}+\sqrt[3]{k}/2.$ Let $\overline{T\left(  q\right)  }$ be the complement
of the Taylor graph $T\left(  q\right)  ,$ let $G$ be a closed blowup of
$\overline{T\left(  q\right)  }$, i.e., $G=$ $\overline{T\left(  q\right)
}^{\left[  t\right]  },$ and set $n=tq^{3}=v\left(  G\right)  .$ We see that
\[
\lambda_{k}\left(  G\right)  =\lambda_{2}(\overline{T\left(  q\right)
})t+t-1=\frac{1}{2}\left(  q^{2}-1\right)  t+t-1>\frac{1}{2}q^{2}t.
\]
Hence,
\[
c_{k}\geq\frac{\lambda_{k}\left(  G\right)  }{n}>\frac{q^{2}t}{2q^{3}t}%
=\frac{1}{2q}\geq\frac{1}{2\sqrt{k-1}+\sqrt[3]{k}}.
\]
Now, the bound on $c_{-k+1}$ follows in view of Proposition \ref{spro}.
\end{proof}

\bigskip

\begin{proof}
[\textbf{Proof of Theorem \ref{thSB}}]In view of Proposition \ref{spro}, if
$k\geq2$, we always have $c_{-k+1}\geq c_{k},$ so our main goal is to prove
the bounds on $c_{k}.$ This proof naturally splits into two cases: $5\leq
k\leq15$ and $k\geq16.$

If $5\leq k\leq15,$ we just take an appropriate strongly regular graph $H$
with parameters $(v,k,a,c),$ and let $G$ be a closed blowup of $H\ $of order
$n,$ i.e., $G=H^{\left[  n/v\right]  },$ where $n$ is a multiple of $v.$ In
view of Proposition \ref{probu},
\[
\lambda_{k}\left(  G\right)  =\left(  \lambda_{k}\left(  H\right)  +1\right)
\frac{n}{v}-1.
\]
Using well known sources, say the home page of A. Brouwer, we obtain the
following table:%
\[%
\begin{tabular}
[t]{ccc}%
\ $H(v,k,a,c)$ & \ Eigenvalue of $H$ \  & Eigenvalue of $G=H^{\left[
n/v\right]  }\medskip$\\
(9, 4, 1, 2) & $\lambda_{5}\left(  H\right)  =1$ & $\lambda_{5}\left(
G\right)  =\frac{2}{9}n-1\medskip$\\
(10, 3, 0, 1) & $\lambda_{6}\left(  H\right)  =1$ & $\lambda_{6}\left(
G\right)  =\frac{1}{5}n-1\medskip$\\
(13, 6, 2, 3) & \ \ \ \ \ \ $\lambda_{7}\left(  H\right)  =\frac{\sqrt{13}%
-1}{2}$ & \ \ \ \ \ $\lambda_{7}\left(  G\right)  =\frac{\sqrt{13}+1}%
{26}n-1\medskip$\\
(15, 6, 1, 3) & $\lambda_{8}\left(  H\right)  =1$ & $\lambda_{8}\left(
G\right)  =\frac{2}{15}n-1\medskip$\\
(15, 6, 1, 3) & $\lambda_{9}\left(  H\right)  =1$ & $\lambda_{9}\left(
G\right)  =\frac{2}{15}n-1\medskip$\\
(15, 6, 1, 3) & $\lambda_{10}\left(  H\right)  =1$ & $\lambda_{10}\left(
G\right)  =\frac{2}{15}n-1\medskip$\\
(21,10,3,6) & $\lambda_{11}\left(  H\right)  =1$ & $\lambda_{11}\left(
G\right)  =\frac{2}{21}n-1\medskip$\\
(21,10,3,6) & $\lambda_{12}\left(  H\right)  =1$ & $\lambda_{12}\left(
G\right)  =\frac{2}{21}n-1\medskip$\\
(21,10,3,6) & $\lambda_{13}\left(  H\right)  =1$ & $\lambda_{13}\left(
G\right)  =\frac{2}{21}n-1\medskip$\\
(21,10,3,6) & $\lambda_{14}\left(  H\right)  =1$ & $\lambda_{14}\left(
G\right)  =\frac{2}{21}n-1\medskip$\\
(21,10,3,6) & $\lambda_{15}\left(  H\right)  =1$ & $\lambda_{15}\left(
G\right)  =\frac{2}{21}n-1\medskip$%
\end{tabular}
\
\]
Now, letting $n\rightarrow\infty,$ we obtain
\[
c_{5}\geq2/9,\text{ \ }c_{6}\geq1/5,\text{ \ }c_{7}\geq\sqrt{13}/2+1/2,\text{
\ }c_{8}\geq2/15,\text{ \ }c_{9}\geq2/15,\text{ \ }c_{10}\geq2/15.
\]
Likewise, if $11\leq k\leq15,$ we obtain $c_{k}\geq2/21.$ These inequalities
obviously imply that
\[
c_{k}\geq\frac{1}{k-1/2}%
\]
whenever $5\leq k\leq15.$

Now, let $k\geq16,$ and let $q$ be the smallest prime $q$ such that
\begin{equation}
q\geq1/2+\sqrt{k-3/4}.\label{ink}%
\end{equation}
Bertrand's postulate guarantees that for any real $x>3,$ there is a prime $q$
such that%
\[
\left\lceil x\right\rceil <q\leq2\left\lceil x\right\rceil -3.
\]
Since $2\left\lceil x\right\rceil -3<2x-1,$ in our case this implies that
\begin{equation}
q<2\left(  1/2+\sqrt{k-1}\right)  -1=\sqrt{4k-3}<2\sqrt{k-1}.\label{ink1}%
\end{equation}
Let $\overline{T\left(  q\right)  }$ be the complement of the Taylor graph
$T\left(  q\right)  ,$ and let $G=$ $\overline{T\left(  q\right)  }^{\left[
t\right]  }.$ Since inequality (\ref{ink}) implies that $k\leq q\left(
q-1\right)  +1,$ in view of (\ref{ref}), we see that%
\[
\lambda_{k}(\overline{T\left(  q\right)  })=\frac{1}{2}\left(  q^{2}-1\right)
,
\]
and therefore,%
\[
\lambda_{k}\left(  G\right)  =\lambda_{k}(\overline{T\left(  q\right)
})t+t-1=\frac{1}{2}\left(  q^{2}-1\right)  t+t-1>\frac{1}{2}q^{2}t.
\]
Now, inequality (\ref{ink1}) implies that
\[
c_{k}^{\ast}\geq\frac{\lambda_{k}\left(  G\right)  }{q^{3}t}>\frac{1}%
{2q}>\frac{1}{4\sqrt{k-1}},
\]
completing the proof of Theorem \ref{thSB}.
\end{proof}

\bigskip

\subsection{Proofs of Theorems \ref{thKHN}, \ref{thj}, and \ref{thj1}}

The proofs of Theorems \ref{thKHN}, \ref{thj}, and \ref{thj1} are very close,
but for reader's sake we give them separately. All three proofs exploit the
construction of Hadamard matrices due to Kharaghani \cite{Kha85}, see also
\cite{IoSh06}, Theorem 4.4.16. We shall vary both the blocks and the
underlying Latin square, so the reader is referred to \ref{LS} for necessary
details about Latin squares. The idea of using symmetric Latin squares with
constant diagonal is borrowed from Haemers \cite{Hae08}, Theorem 2, and Ionin
and Shrikhande \cite{IoSh06}, Corollary 5.3.17.\medskip

\begin{proof}
[\textbf{Proof of Theorem \ref{thKHN}}]Suppose that $L=\left[  l_{i,j}\right]
$ is a back-circulant Latin square of size $s,$ with symbol set $\left\{
1,\ldots,s\right\}  .$ Let $\mathbf{x}_{1},\ldots,\mathbf{x}_{s}$ be
orthogonal $\left(  -1,1\right)  $-vectors of dimension $n\geq s$ that are
also orthogonal to the all ones vector $\mathbf{j}_{n}.$ An easy choice is to
take the last $k$ rows of a normalized Hadamard matrix of order $n>k.$ For
each $s=1,\ldots,s,$ define a square matrix $A_{s}$ by $A_{s}=-\mathbf{x}%
_{s}\otimes\mathbf{x}_{s}.$ Obviously $A_{1},\ldots,A_{s}$ are symmetric
$\left(  -1,1\right)  $-matrices of size $n$ and rank $1,$ with diagonal
entries equal to $-1.$

Now, let $B$ be the block matrix obtained by replacing each entry $l_{i,j}$ of
$L$ by the matrix $A_{l_{i,j}}.$ Note that $B$ is a symmetric $\left(
-1,1\right)  $-matrix of size $sn.$ Obviously the diagonal entries of $B$ are
equal to $-1,$ thus \emph{(iii) }holds. Note also that
\[
A_{p}A_{q}=\left\{
\begin{array}
[c]{ll}%
0 & \text{if }q\neq p;\\
-nA_{p} & \text{if }q=p.
\end{array}
\right.
\]
Hence, $B^{2}$ is block diagonal with each diagonal block equal to
$-nA_{1}-\cdots-nA_{s}.$ But $-nA_{1}-\cdots-nA_{s}$ is of rank $s,$ so it has
exactly $s$ nonzero eigenvalues, each equal to $n^{2}.$ Thus, $B$ has $s^{2}$
nonzero eigenvalues, and their absolute value is equal to $n.$

Writing $n_{+}$ and $n_{-}$ for the number of positive and negative
eigenvalues of $B,$ we have%
\[
\left(  n_{+}-n_{-}\right)  n=\mathrm{tr}\text{ }B=-sn,
\]
and so, $n_{+}-n_{-}=-s,$ implying that
\[
n_{+}=\binom{s-1}{2}\text{ and }n_{-}=\binom{s+1}{2},
\]
completing the proof of \emph{(i)}.

To prove \emph{(ii)} note that each rowsum of each matrix $A_{i}$ is zero, so
the rowsums of $B$ are zero as well. This completes the proof of Theorem
\ref{thKHN}.
\end{proof}

\bigskip

\begin{proof}
[\textbf{Proof of Theorem \ref{thj}}]Suppose that $L=\left[  l_{i,j}\right]  $
is a symmetric Latin square of order $s$ with constant diagonal. Let $\left\{
1,\ldots,s\right\}  $ be the symbol set of $L$ and $s$ be the diagonal symbol.
Next, select $s$ vectors $\mathbf{x}_{1},\ldots,\mathbf{x}_{s}$ of dimension
$n\geq s$ such that $\mathbf{x}_{1}=\mathbf{j}_{n}$ and every two of the
vectors $\mathbf{x}_{1},\ldots,\mathbf{x}_{s}$ are orthogonal$.$ An easy
choice is to take the first $k$ rows of a normalized Hadamard matrix of order
$n\geq k.$ Let $A_{1}=-J_{n},$ and for each $i=2,\ldots,s,$ define a square
matrix $A_{i}$ by
\[
A_{i}=\mathbf{x}_{i}\otimes\mathbf{x}_{i}.
\]
Obviously $A_{1},\ldots,A_{s}$ are symmetric $\left(  -1,1\right)  $-matrices
of size $n$ and rank $1.$ Note also that the diagonal entries of $A_{s}$ are
equal to $1.$

Now, let $B$ be the block matrix obtained by replacing each entry $l_{i,j}$ of
$L$ by the matrix $A_{l_{i,j}}.$ Note that $B$ is a symmetric $\left(
-1,1\right)  $-matrix of size $sn.$ The diagonal entries of $B$ are equal to
$1,$ thus \emph{(iii) }holds. Note also that%
\[
A_{p}A_{q}=\left\{
\begin{array}
[c]{ll}%
0 & \text{if }q\neq p;\\
nJ_{n} & \text{if }q=p=1;\\
nA_{p} & \text{if }q=p\neq1.
\end{array}
\right.
\]
Hence, $B^{2}$ is block diagonal with each diagonal block equal to
$nJ_{n}+\cdots+nA_{s}.$ Since $nJ_{n}+\cdots+nA_{s}$ has exactly $s$ nonzero
eigenvalues, each equal to $n^{2},$ we see that $B$ has $s^{2}$ nonzero
eigenvalues, and their absolute value is equal to $n.$

Writing $n_{+}$ and $n_{-}$ for the number of positive and negative
eigenvalues of $B,$ we have%
\[
\left(  n_{+}-n_{-}\right)  n=\mathrm{tr}\text{ }B=sn,
\]
and so, $n_{+}-n_{-}=s,$ implying that
\[
n_{+}=\binom{s+1}{2}\text{ and }n_{-}=\binom{s-1}{2},
\]
completing the proof of \emph{(i)}.

To prove \emph{(ii)} note that\emph{ }%
\[
A_{p}\mathbf{j}_{n}=\left\{
\begin{array}
[c]{cc}%
0 & \text{if }p\neq1;\\
-n\mathbf{j}_{n} & \text{if }p=1.
\end{array}
\right.
\]
Therefore,
\[
B\mathbf{j}_{sn}=\left(
\begin{array}
[c]{c}%
A_{1}\mathbf{j}_{n}\\
A_{1}\mathbf{j}_{n}\\
\vdots\\
A_{1}\mathbf{j}_{n}%
\end{array}
\right)  =-n\mathbf{j}_{sn}.
\]
Thus, $\mathbf{j}_{kn}$ is an eigenvector of $B$ to the eigenvalue $-n,$
completing the proof of Theorem \ref{thj}.
\end{proof}

\bigskip

\begin{proof}
[\textbf{Proof of Theorem \ref{thj1}}]Our proof combines the proofs of
Theorems \ref{thKHN} and \ref{thj}. Suppose that $L=\left[  l_{i,j}\right]  $
is a back-circulant Latin square of size $s,$ with symbol set $\left\{
1,\ldots,s\right\}  .$ Next, select $s$ vectors $\mathbf{x}_{1},\ldots
,\mathbf{x}_{s}$ of dimension $n\geq s$ such that $\mathbf{x}_{1}%
=\mathbf{j}_{n}$ and every two of the vectors $\mathbf{x}_{1},\ldots
,\mathbf{x}_{s}$ are orthogonal$.$ Let $A_{1}=-J_{n},$ and for each
$i=2,\ldots,s,$ define a square matrix $A_{i}$ by $A_{i}=\mathbf{x}_{i}%
\otimes\mathbf{x}_{i}.$ Let $B$ be the block matrix obtained by replacing each
entry $l_{i,j}$ of $L$ by the matrix $A_{l_{i,j}}.$ Note that $B$ is a
symmetric $\left(  -1,1\right)  $-matrix of size $sn,$ with $\left(
s-1\right)  n$ diagonal entries equal to $1$ and $n$ diagonal entries equal to
$-1.$ Note also that%
\[
A_{p}A_{q}=\left\{
\begin{array}
[c]{ll}%
0 & \text{if }q\neq p;\\
nJ_{n} & \text{if }q=p=1;\\
nA_{p} & \text{if }q=p\neq1.
\end{array}
\right.
\]
Hence, $B$ has $s^{2}$ nonzero eigenvalues, and their absolute value is equal
to $n.$ Writing $n_{+}$ and $n_{-}$ for the number of positive and negative
eigenvalues of $B,$ we have%
\[
\left(  n_{+}-n_{-}\right)  n=\mathrm{tr}\text{ }B=\left(  s-2\right)  n,
\]
and so, $n_{+}-n_{-}=s-2,$ implying that
\[
n_{+}=\binom{s+1}{2}-1\text{ and }n_{-}=\binom{s-1}{2}+1,
\]
completing the proof of \emph{(i)}.

To prove \emph{(ii)} let us note that if $2\leq i\leq s$, then all rowsums of
$A_{p}$ are zero. So the rowsums of $B$ are equal to the rowsums of $-J_{n},$
which are equal to $-n.$
\end{proof}

\medskip

\subsection{Proofs of Theorems \ref{thck} and \ref{thck1} \emph{ }}

\medskip

\begin{proof}
[\textbf{Proof of Theorem \ref{thck}}]Let $B$ be a matrix constructed by
Theorem \ref{thj} and let $C=B\otimes J_{t}.$ The properties of $B$ given by
Theorem \ref{thj} imply that $C$ is a symmetric $\left(  -1,1\right)  $-matrix
of order $snt$ such that:

- $C$ has exactly $s^{2}$ nonzero eigenvalues, of which $\binom{s-1}{2}$ are
equal to $nt$ and $\binom{s+1}{2}$ are equal to $-nt$;

- the diagonal entries of $C$ are equal to $-1$;

- the vector $\mathbf{j}_{snt}$ is an eigenvector of $C$ to the eigenvalue
$-nt.\medskip$

Now, let
\[
A=\frac{1}{2}\left(  J_{snt}-C\right)  .
\]
Clearly $A$ is a symmetric $\left(  0,1\right)  $-matrix of order $snt$, with
zero diagonal; hence, $A$ is the adjacency matrix of some graph $G$ of order
$snt$. Let $\mathbf{x}_{1}=\mathbf{j}_{snt},\mathbf{x}_{2},\ldots
,\mathbf{x}_{snt}$ be orthogonal eigenvectors to $C.$ Note that
\[
\frac{1}{2}\left(  J_{snt}-C\right)  \mathbf{j}_{snt}=\left(  \frac{snt}%
{2}+\frac{nt}{2}\right)  \mathbf{j}_{snt},
\]
so $snt/2+nt/2$ is an eigenvalue to $G.$ Also, for any $i=2,\ldots,snt,$ we
see that
\[
A\mathbf{x}_{i}=\frac{1}{2}\left(  J_{snt}-C\right)  \mathbf{x}_{i}=-\frac
{1}{2}C\mathbf{x}_{i},
\]
so $G$ has $\binom{s+1}{2}$ eigenvalues equal to $-nt/2$ and $\binom{s-1}%
{2}-1$ eigenvalues equal to $nt/2.$ Therefore,%
\[
\lambda_{1}^{\ast}\left(  G\right)  +\cdots+\lambda_{s^{2}}^{\ast}\left(
G\right)  =\frac{snt}{2}+\frac{nt}{2}+\left(  s^{2}-1\right)  \frac{nt}%
{2}=\frac{1}{2}\left(  1+s\right)  \frac{snt}{2},
\]
completing the proof of Theorem \ref{thck}.
\end{proof}

\medskip

\begin{proof}
[\textbf{Proof of Theorem \ref{thck1}}]Let $\lambda$ be the rowsum of $B,$
which clearly is a nonzero eigenvalue of $B$ with eigenvector $\mathbf{j}_{n}%
$. Since $B\in\mathbb{S}_{k},$ either $\lambda=n/\sqrt{k}$ or $\lambda
=-n/\sqrt{k}.$ We shall assume that $\lambda=-n/\sqrt{k},$ for otherwise we
just take $-B$ for $B.$ Now, for any positive integer $t,$ define a symmetric
$\left(  0,1\right)  $-matrix $A^{\prime}$ by
\[
A^{\prime}=\frac{1}{2}\left(  J_{nt}-B\otimes J_{t}\right)  ,
\]
and note that%
\[
\lambda_{1}\left(  A^{\prime}\right)  =\frac{nt}{2}+\frac{nt}{2\sqrt{k}%
},\text{ \ \ and \ \ }\lambda_{i}^{\ast}\left(  A^{\prime}\right)  =\frac
{nt}{2\sqrt{k}}\text{ for }1<i\leq k.
\]
Next, zero the diagonal of $A^{\prime}$ and write $A$ for the resulting
matrix. Clearly $A$ is a symmetric $\left(  0,1\right)  $-matrix with zero
diagonal, so $A$ is the adjacency matrix of some graph $G\ $of order $nt.$
Using Weyl's inequalities (Proposition \ref{proW0}), we see that
\[
\lambda_{1}\left(  G\right)  \geq\frac{nt}{2}+\frac{nt}{2\sqrt{k}}-1,\text{
\ \ and \ \ }\lambda_{i}^{\ast}\left(  G\right)  \geq\frac{nt}{2\sqrt{k}%
}-1\text{ for }1<i\leq k.
\]
Therefore,
\[
\lambda_{1}^{\ast}\left(  G\right)  +\cdots+\lambda_{k}^{\ast}\left(
G\right)  \geq\left(  \frac{nt}{2}+k\frac{nt}{2\sqrt{k}}\right)  -k=\frac
{1}{2}\left(  1+\sqrt{k}\right)  nt-k,
\]
completing the proof of Theorem \ref{thck1}.
\end{proof}

\medskip

\section{\label{RM}A recap for symmetric $\left(  -1,1\right)  $-matrices}

Many solutions in this paper come from $\left(  -1,1\right)  $-matrices. This
is not incidental, for if $G$ is a regular graph, then its adjacency spectrum
is linearly equivalent to the spectrum of its Seidel's matrix, which is a
$\left(  0,-1,1\right)  $-matrix. But there is more to that: if $G$ is a
$\left(  n/2\right)  $-regular graph, the Seidel matrix effectively eliminates
the largest eigenvalue of $G$, which may be nuisance in certain spectral
problems, like most of the problems discussed in this paper. One cannot but
agree that many of the questions raised above for graphs seem more balanced
and natural if translated for $\left(  -1,1\right)  $-matrices. In this
section we explore such translations.

Thus, for any $k\geq1,$ let us introduce the functions%
\[
\Lambda_{k}\left(  n\right)  =\max_{A\in\mathbb{U}_{n}}\lambda_{k}\left(
A\right)  \text{ \ \ and \ \ \ \ }\Lambda_{k}^{\ast}\left(  n\right)
=\max_{A\in\mathbb{U}_{n}}\lambda_{k}^{\ast}\left(  A\right)  .
\]
Obviously $\Lambda_{k}\left(  n\right)  $ and $\Lambda_{k}^{\ast}\left(
n\right)  $ are the matrix analogs of $\lambda_{k}\left(  n\right)  $ and
$\lambda_{k}^{\ast}\left(  n\right)  ;$ we do not need an analog to
$\lambda_{-k}\left(  n\right)  ,$ as $\mathbb{U}_{n}$ is closed under
negation. Next, in the general spirit of the paper, we raise the problem:

\begin{problem}
For any $k\geq2,$ find $\Lambda_{k}\left(  n\right)  $ and $\Lambda_{k}^{\ast
}\left(  n\right)  $.
\end{problem}

Much of what we have achieved for graphs applies to symmetric $\left(
-1,1\right)  $-matrices as well. First, obviously%
\begin{equation}
\Lambda_{k}\left(  n\right)  \leq\Lambda_{k}^{\ast}\left(  n\right)  \leq
n/\sqrt{k}.\label{umc}%
\end{equation}
Note that for $\Lambda_{k}^{\ast}\left(  n\right)  ,$ bound (\ref{umc}) is
precise for infinitely many $k$ and $n.$ Indeed, if \ $\mathbb{S}_{k}%
\neq\varnothing,$ for arbitrary large $n,$\ we have $\Lambda_{k}^{\ast}\left(
n\right)  =n/\sqrt{k}$.

Further, in analogy to $c_{k}$ and $c_{k}^{\ast},$ let
\[
d_{k}=\sup_{n\geq1}\frac{\Lambda_{k}\left(  n\right)  }{n}\text{ \ \ \ and
\ \ \ }d_{k}^{\ast}=\sup_{n\geq1}\frac{\Lambda_{k}^{\ast}\left(  n\right)
}{n}.
\]
The constants $d_{k}$ and $d_{k}^{\ast}$ are handy, as for any $n$ and any
matrix $A\in\mathbb{U}_{n},$ we have
\[
\lambda_{k}\left(  A\right)  \leq d_{k}n\text{ \ \ and \ \ }\lambda_{k}^{\ast
}\left(  A\right)  \leq d_{k}^{\ast}n.
\]
In turns out that these inequalities are best possible, for one can show that%
\[
\lim_{n\rightarrow\infty}\frac{\Lambda_{k}\left(  n\right)  }{n}=d_{k}\text{
\ \ \ \ and \ \ \ \ }\lim_{n\rightarrow\infty}\frac{\Lambda_{k}^{\ast}\left(
n\right)  }{n}=d_{k}^{\ast}.
\]
Thus, much about $\Lambda_{k}\left(  n\right)  $ and $\Lambda_{k}^{\ast
}\left(  n\right)  $ would be known if we knew $d_{k}$ and $d_{k}^{\ast}$ or
estimates thereof$.$ First, from (\ref{umc}) we immediately get an upper bound%
\[
d_{k}\leq d_{k}^{\ast}\leq1/\sqrt{k},
\]
so the difficulty is to find matching lower bounds.

As one may expect, $d_{k}^{\ast}$ is easier to tackle than $d_{k}.$ Indeed,
since $d_{k}^{\ast}$ is nonincreasing in $k,$ letting $s$ to be the smallest
positive integer such that $s^{2}\geq k,$ in view of $\mathbb{S}_{s^{2}}%
\neq\varnothing,$ we see that $d_{k}^{\ast}\geq1/s.$ But $\left(  s-1\right)
^{2}<k,$ and so,%
\[
\frac{1}{\sqrt{k}}\geq d_{k}^{\ast}\geq\frac{1}{s}>\frac{1}{\sqrt{k}+1}%
=\frac{1}{\sqrt{k}}+O\left(  k^{-1}\right)  .
\]
This argument does not fit to bound $d_{k}$, so we need another idea. Since
Taylor's graphs have been useful for $c_{k}$, we can hope to use them for
$d_{k}$ as well. Thus, let $A(\overline{T\left(  q\right)  })$ be the
adjacency matrix of the complement of the Taylor graph $T\left(  q\right)  $
of order $q^{3}.$ Define the matrix $T\in\mathbb{U}_{q^{3}}$, by setting
\[
T=2A(\overline{T\left(  q\right)  })-J_{q^{3}}.
\]
It is not hard to see that the matrix $T$ has three distinct eigenvalues:%
\begin{align*}
\lambda_{1}(T) &  =q^{2}-1,\text{ \ \ \ \ \ with multiplicity \ }q\left(
q-1\right)  ;\\
\lambda_{2}(T) &  =q^{2}-q-1,\text{ with multiplicity }1;\\
\lambda_{3}(T) &  =-q-1,\text{ \ \ \ \ with multiplicity }\left(  q-1\right)
\left(  q^{2}+1\right)  .
\end{align*}
Note in passing that the mapping $A(\overline{T\left(  q\right)  }\rightarrow
T$ preserves all eigenvalues except $\lambda_{1}(\overline{T\left(  q\right)
}),$ whose magnitude is reduced essentially to $\lambda_{2}(\overline{T\left(
q\right)  }).$

Now, using the Baker, Harman, and Pintz result \cite{BHP01} again, for
sufficiently large $k,$ we get the asymptotics
\[
\frac{1}{\sqrt{k}}\geq d_{k}\geq\frac{1}{\sqrt{k}+\sqrt[3]{k}}=\frac{1}%
{\sqrt{k}}+O\left(  k^{-2/3}\right)  .
\]
\medskip We end up with a question about the maximum Ky Fan $k$-norm of
matrices in $\mathbb{U}_{n}.$

\begin{problem}
For any $k\geq2,$ find $\max\limits_{A\in\mathbb{U}_{n}}\left\Vert
A\right\Vert _{\ast k}.$
\end{problem}

Without a proof, let us mention the bounds%
\[
(\sqrt{k}-1)n\leq\max_{A\in\mathbb{U}_{n}}\left\Vert A\right\Vert _{\ast
k}\leq n\sqrt{k}.
\]
\bigskip

\textbf{Acknowledgement. }Part of this paper has been prepared for a talk at
the Algebraic Combinatorics Workshop held in the Fall of 2014, at the
University of Science and Technology of China, Hefei. I am grateful for the
hospitality of the organizers, in particular to prof. Jack Koolen.\bigskip

\end{document}